\newcommand{\D}{\mathbb{D}}
\newcommand{\F}{\mathbb{F}}
\newcommand{\N}{\mathbb{N}}
\newcommand{\Q}{\mathbb{Q}}
\newcommand{\Z}{\mathbb{Z}}
\newcommand{\p}{{}^p\!}
\newcommand{\pl}{{}_p}
\newcommand{\q}{{}_q}
\newcommand{\s}{{}^s}
\newcommand{\uq}{{}^q}
\newcommand{\barp}{{}^{\bar p}\!}
\newcommand{\barq}{{}^{\bar q}}
\newcommand{\lbarq}{{}_{\bar q}}
\newcommand{\cheq}{{}_{\hat q}}
\newcommand{\dualq}{\hat q}
\newcommand{\ufl}{\mathchoice{{}^\flat}{{}^\flat}{{}^\flat}{\flat}}
\newcommand{\ush}{\mathchoice{{}^\sharp}{{}^\sharp}{{}^\sharp}{\sharp}}
\newcommand{\flatqu}{{}^{\flat q}}
\newcommand{\flatql}{{}_{\flat q}}
\newcommand{\sharpqu}{{}^{\sharp q}}
\newcommand{\bz}{\boldsymbol{0}}
\newcommand{\lbz}{{}_{\bz}}
\newcommand{\barGx}{\overline{Gx}}
\newcommand{\barGy}{\overline{Gy}}
\newcommand{\fD}{\mathfrak{D}}
\newcommand{\fC}{\mathfrak{C}}
\newcommand{\cA}{\mathcal{A}}
\newcommand{\cB}{\mathcal{B}}
\newcommand{\cC}{\mathcal{C}}
\newcommand{\cD}{\mathcal{D}}
\newcommand{\cF}{\mathcal{F}}
\newcommand{\cG}{\mathcal{G}}
\newcommand{\cH}{\mathcal{H}}
\newcommand{\cI}{\mathcal{I}}
\newcommand{\cL}{\mathcal{L}}
\newcommand{\cM}{\mathcal{M}}
\newcommand{\cO}{\mathcal{O}}
\newcommand{\cQ}{\mathcal{Q}}
\newcommand{\cIC}{\mathcal{IC}}
\newcommand{\sD}{{}^s\!\fD}
\newcommand{\cg}[1]{\cC_G(#1)}
\newcommand{\cgl}[2]{\cC_G(#1)_{\le #2}}
\newcommand{\cgg}[2]{\cC_G(#1)_{\ge #2}}
\newcommand{\tcgg}[2]{\tilde\cC_G(#1)_{\ge #2}}
\newcommand{\qg}[1]{\cQ_G(#1)}
\newcommand{\dgb}[1]{\cD_G^{\mathrm{b}}(#1)}
\newcommand{\dgp}[1]{\cD^+_G(#1)}
\newcommand{\dgm}[1]{\cD^-_G(#1)}
\newcommand{\dgml}[2]{\cD^-_G(#1)^{\le #2}}
\newcommand{\dgpg}[2]{\cD^+_G(#1)^{\ge #2}}
\newcommand{\dgbl}[2]{\cD^{\mathrm{b}}_G(#1)^{\le #2}}
\newcommand{\dgbg}[2]{\cD^{\mathrm{b}}_G(#1)^{\ge #2}}
\newcommand{\dsml}[2]{\cD^-_G(#1)_{\le #2}}
\newcommand{\dspg}[2]{\cD^+_G(#1)_{\ge #2}}
\newcommand{\dsl}[2]{\cD^{\mathrm{b}}_G(#1)_{\le #2}}
\newcommand{\dsg}[2]{\cD^{\mathrm{b}}_G(#1)_{\ge #2}}
\newcommand{\gen}{{\mathrm{gen}}}
\newcommand{\hto}{\hookrightarrow}
\newcommand{\ssm}{\smallsetminus}
\newcommand{\half}{{\textstyle\frac 12}}
\DeclareMathOperator{\im}{im}
\DeclareMathOperator{\alt}{alt}
\DeclareMathOperator{\supp}{supp}
\DeclareMathOperator{\cod}{cod}
\DeclareMathOperator{\scod}{scod}
\DeclareMathOperator{\step}{step}
\DeclareMathOperator{\Hom}{Hom}
\DeclareMathOperator{\End}{End}
\DeclareMathOperator{\Ext}{Ext}
\DeclareMathOperator{\cHom}{\mathcal{H}\mathit{om}}
\DeclareMathOperator{\cRHom}{\mathit{R}\mathcal{H}\mathit{om}}
\newcommand{\Lotimes}{\mathchoice%
  {\overset{\scriptscriptstyle L}{\otimes}}%
  {\otimes^{\scriptscriptstyle L}}{\otimes^L}{\otimes^L}}
\newtheorem{thm}{Theorem}[section]
\newtheorem{lem}[thm]{Lemma}
\newtheorem{prop}[thm]{Proposition}
\newtheorem{cor}[thm]{Corollary}
\theoremstyle{definition}
\newtheorem{defn}[thm]{Definition}
\theoremstyle{remark}
\newtheorem{rmk}[thm]{Remark}
\numberwithin{equation}{section}
\title{Baric structures on triangulated categories and coherent sheaves}
\author{Pramod N. Achar}
\author{David Treumann}
\thanks{The first author was partially supported by NSF Grant DMS-0500873.}
\date{August 23, 2008}
\newcommand{\typicalbaric}{(\{\fD_{\leq w}\},\{\fD_{\geq w}\})_{w \in \Z}}
\newcommand{\schemebaric}[1]{(\{\dsl {#1}w\},\{\dsg {#1}w\})_{w \in \Z}}
\newcommand{\dbmc}[1]{\mathrm{D}^{\mathrm{b}}_{\mathrm{m}}(#1)}
\begin{document}

\begin{abstract}
We introduce the notion of a \emph{baric structure} on a triangulated category, as an abstraction of S. Morel's weight truncation formalism for mixed $\ell$-adic sheaves.  We study these structures on the derived category $\dgb X$ of $G$-equivariant coherent sheaves on a $G$-scheme $X$.  Our main result shows how to endow this derived category with a family of nontrivial baric structures when $G$ acts on $X$ with finitely many orbits.

We also describe a general construction for producing a new $t$-structure on a triangulated category equipped with given $t$- and baric structures, and we prove that the staggered $t$-structures on $\dgb X$ introduced by the first author arise in this way.
\end{abstract}

\maketitle

\section{Introduction}
\label{sect:intro}

Let $Z$ be a variety over a finite field.  The triangulated category of
$\ell$-adic sheaves on $X$ has a full subcategory $\dbmc Z$ of ``mixed
sheaves,'' defined in terms of eigenvalues of the Frobenius morphism.  The
existence and good formal properties of this category are among the most
important consequences of Deligne's proof of the Weil conjectures.  It
plays a major role in the theory of perverse sheaves and their applications
in representation theory.  An important part of the formalism of mixed
sheaves is a certain filtration of $\dbmc Z$ by full subcategories 
$\{\dbmc Z_{\le w}\}_{w \in \Z}$, known as the \emph{weight filtration}.

Let us now turn our attention to the world of equivariant coherent sheaves.
 Let $X$ be a scheme (say, of finite type over a field), 
and let $G$ be an affine group scheme acting on $X$ with finitely many orbits.  In~\cite{a}, the first author introduced a class of $t$-structures, called \emph{staggered $t$-structures}, on the bounded derived category $\dgb X$ of $G$-equivariant coherent sheaves on $X$.  These $t$-structures depend on the choice of a certain kind of filtration of the abelian category of equivariant coherent sheaves.  These filtrations, known as \emph{$s$-structures}, bear an at least superficial resemblance to the weight filtration of $\dbmc Z$.

The main goal of this paper is to try to make this resemblance into a precise statement, and to thereby place these two kinds of structures in a unified setting.  We do this by introducing the notion of a \emph{baric structure} on a triangulated category.  The usual weight filtration on $\dbmc Z$ is not a baric structure, but a modified version of it due to S.~Morel~\cite{mor} is.  (Indeed, the definition of a baric structure is largely motivated by Morel's results.)  An $s$-structure is not a baric structure either: for one thing, it is a filtration of an abelian category, not of a triangulated category.  We show in this paper how to construct baric structures on $\dgb X$ using an $s$-structure on $X$.  We also exhibit several other examples of baric structures that have appeared in the literature.

The second goal of the paper is to recast the construction in~\cite{a} as an instance of an abstract operation that can be done on any triangulated category.  Specifically, given a triangulated category with ``compatible'' $t$- and baric structures, we outline a procedure, which we call \emph{staggering}, for producing a new $t$-structure.  Note that in~\cite{a}, ``staggered'' was simply a name assigned to certain specific $t$-structures by definition, whereas in this paper, ``to stagger'' is a verb.  We prove that these two uses of the word are consistent: that is, that the $t$-structures of~\cite{a} arise by staggering the standard $t$-structure on $\dgb X$ with respect to a suitable baric structure.
(The staggering operation can also be applied to the weight baric structure
on $\dbmc Z$, as well as to other baric structures.  This yields a new
$t$-structure that has not previously been studied.)

An outline of the paper is as follows.  We begin in
Section~\ref{sect:baric} by giving the definition of a baric structure and
of the staggering operation. In Section~\ref{sect:examples}, we give
examples of baric structures, including Morel's version of the weight
filtration.  Next, in Section~\ref{sect:baric-coh1}, we begin the study of
baric structures on derived categories of equivariant coherent sheaves,
especially those that behave well with respect to the geometry of the
underlying scheme.

The next three sections are devoted to the relationship between baric
structures and $s$-structures.  First, in Section~\ref{sect:stagt}, we
review relevant definitions and results from~\cite{a}. 
Section~\ref{sect:baric-coh2} contains the main result of the
paper, showing how $s$-structures on the abelian category of coherent
sheaves give rise to baric structures on the derived category.
In Section~\ref{sect:mult}, we briefly consider the reverse problem, that
of producing $s$-structures from baric structures.

Finally, in Section~\ref{sect:stag2}, we study staggered $t$-structures
associated to the baric structures produced in
Section~\ref{sect:baric-coh2}.  Specifically, we prove that their hearts
are finite-length categories, and we give a description of their simple
objects.  This was done in some cases in~\cite{a}, but remarkably, the
machinery of baric structures allows us to remove the assumptions that were
imposed in~{\it loc.~cit}.

We conclude by mentioning an application of the machinery
developed in this paper.  The language of baric structures allows one to
define a notion of ``purity,'' similar to the one for $\ell$-adic mixed
constructible sheaves.  In a subsequent paper~\cite{at}, the authors
prove that every simple staggered sheaf is pure, and that every pure
object in the derived category is a direct sum of shifts of simple
staggered sheaves.  These results are analogous to the well-known Purity
and Decomposition Theorems for $\ell$-adic mixed perverse sheaves.

\section{Baric structures}
\label{sect:baric}

In this section we introduce baric structures on triangulated categories
(Definition~\ref{defn:baric}), and the 
operation of \emph{staggering} a $t$-structure with respect to a baric
structure (Definition~\ref{defn:stag}).
Staggering produces, out of a $t$-structure $(\fD^{\leq 0}, \fD^{\geq 0})$ on a 
triangulated category $\fD$, a new pair of orthogonal subcategories 
$(\sD^{\leq 0},\sD^{\geq 0})$.  Our main result is a criterion which guarantees that
$(\sD^{\leq 0},\sD^{\geq 0})$ is itself a $t$-structure
(Theorem~\ref{thm:stag-gen}).
 
\subsection{Baric structures}

\begin{defn}\label{defn:baric}
Let $\fD$ be a triangulated category.
A \emph{baric structure} on $\fD$ is a pair of collections of thick subcategories $(\{\fD_{\le w}\}, \{\fD_{\ge w}\})_{w \in \Z}$ satisfying the following axioms:
\begin{enumerate}
\item $\fD_{\le w} \subset \fD_{\le w+1}$ and $\fD_{\ge w} \supset \fD_{\ge w+1}$ for all $w$.
\item $\Hom(A,B) = 0$ whenever $A \in \fD_{\le w}$ and $B \in \fD_{\ge w+1}$.
\item For any object $X \in D$, there is a distinguished triangle $A \to X
\to B \to$ with $A \in \fD_{\le w}$ and $B \in \fD_{\ge w+1}$.\label{it:dt}
\end{enumerate}
\end{defn}

This definition is at least superficially very similar to that of \emph{$t$-structure}, 
and in fact arguments identical to those given in~\cite[\S\S 1.3.3--1.3.5]{bbd} yield 
the following basic properties of baric structures.

\begin{prop}\label{prop:baric-basic}
Let $\fD$ be a triangulated category equipped with a baric structure $\typicalbaric$.
The inclusion $\fD_{\le w} \hto \fD$ admits a right adjoint $\beta_{\le w}: \fD \to \fD_{\le w}$, and the inclusion $\fD_{\ge w} \to \fD$ admits a left adjoint $\beta_{\ge w}: \fD \to \fD_{\ge w}$.  There is a distinguished triangle
\[
\beta_{\le w}X \to X \to \beta_{\ge w+1}X \to,
\]
and any distinguished triangle as in Axiom~(3) above is canonically isomorphic to this one.  Furthermore, 
if $v \le w$, then we have the following isomorphisms of functors:
\begin{align*}
\beta_{\le v} \circ \beta_{\le w} &\cong \beta_{\le v} &
\beta_{\ge v} \circ \beta_{\le w} &\cong \beta_{\le w} \circ \beta_{\ge v}
\\
\beta_{\ge w} \circ \beta_{\ge v} &\cong \beta_{\ge w} &
\beta_{\le v} \circ \beta_{\ge w} &\cong \beta_{\ge w} \circ \beta_{\le
v}=0
\qedhere
\end{align*}
\end{prop}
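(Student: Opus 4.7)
The plan is to follow BBD's argument for the analogous properties of $t$-structures verbatim, as the baric axioms are formally parallel: Axioms~(1)--(3) for $\fD_{\le w}$ and $\fD_{\ge w+1}$ play the same role as the aisles of a $t$-structure, with Axiom~(2) substituting for semi-orthogonality. I would proceed in three stages: (a) construct the functors and establish canonicity of the decomposition triangle; (b) derive the adjunction properties; (c) obtain the composition identities.

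For stage~(a), given two candidate triangles $A \to X \to B$ and $A' \to X \to B'$ from Axiom~(3), I would apply $\Hom(A,-)$ to the second. Axiom~(2) kills $\Hom(A, B'[-1])$ and $\Hom(A, B')$, giving $\Hom(A, A') \cong \Hom(A, X)$; so the structure map $A \to X$ lifts uniquely to a morphism $A \to A'$. A symmetric construction yields $A' \to A$, and one further uniqueness check forces the composites to be identities. This proves canonicity; the same Hom-vanishing applied to morphisms $f \colon X \to Y$ supplies the functoriality that promotes $A \mapsto A$ and $B \mapsto B$ to functors $\beta_{\le w}$ and $\beta_{\ge w+1}$.

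For stage~(b), applying $\Hom(Y,-)$ with $Y \in \fD_{\le w}$ to the canonical triangle $\beta_{\le w} X \to X \to \beta_{\ge w+1} X$ kills the flanking terms by Axiom~(2) and yields $\Hom(Y, \beta_{\le w} X) \cong \Hom(Y, X)$; the left-adjoint statement for $\beta_{\ge w+1}$ is dual. For stage~(c), the identities $\beta_{\le v}\beta_{\le w}\cong\beta_{\le v}$ and $\beta_{\ge w}\beta_{\ge v}\cong\beta_{\ge w}$ (for $v\le w$) follow from uniqueness of adjoints applied to the factored inclusion $\fD_{\le v}\hookrightarrow\fD_{\le w}\hookrightarrow\fD$ (and its analogue on the $\fD_{\ge}$ side). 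The commutation $\beta_{\ge v}\beta_{\le w}\cong\beta_{\le w}\beta_{\ge v}$ I would extract from an octahedron built on the composition $\beta_{\le v-1}X \to \beta_{\le w}X \to X$: the resulting third edge $C$ lies in $\fD_{\le w}$ (as the cone of a map in $\fD_{\le w}$) and in $\fD_{\ge v}$ (as it fits in a triangle $C \to \beta_{\ge v}X \to \beta_{\ge w+1}X$), so by the canonicity from stage~(a) it simultaneously realizes both iterated truncations. The vanishing $\beta_{\le v}\beta_{\ge w}=0$ then drops out from the observation that any object in $\fD_{\le v}\cap\fD_{\ge w}$ has zero identity endomorphism by Axiom~(2).

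I anticipate no genuine obstacle: each step is a direct transcription of the corresponding BBD argument once one has the decomposition triangle from Axiom~(3) and the Hom-vanishing from Axiom~(2). The one subtle point is that the final vanishing $\beta_{\le v}\beta_{\ge w}=0$ genuinely requires the strict inequality $v<w$ (so that $\fD_{\ge w}\subset\fD_{\ge v+1}$ and Axiom~(2) applies); the commutation isomorphism remains valid at the boundary $v=w$, but its common value need not be zero there. I would make this tacit convention explicit when writing up the formal statement.
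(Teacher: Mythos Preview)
Your proposal is correct and is precisely the approach the paper takes: the paper's entire proof is the one-line remark that ``arguments identical to those given in \cite[\S\S 1.3.3--1.3.5]{bbd}'' yield the result, which is exactly what you have unpacked in stages (a)--(c). Your closing observation is also on point: the vanishing $\beta_{\le v}\circ\beta_{\ge w}=0$ genuinely requires $v<w$ (objects of $\fD_{\le w}\cap\fD_{\ge w}$ need not vanish), so the paper's display is slightly imprecise at $v=w$.
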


Note that in a baric structure, unlike in a $t$-structure, the 
subcategories $\fD_{\le w}$ and $\fD_{\ge w}$ are required to be 
stable under shifts in both directions, and it is not assumed that there is
an autoequivalence $\fD \to \fD$ taking $\fD_{\le w}$ to, say, $\fD_{\le
w+1}$.  
Moreover, baric truncation functors enjoy the following important property.

\begin{prop}
The baric truncation functors $\beta_{\le w}$ and $\beta_{\ge w}$ take 
distinguished triangles to distinguished triangles.
\end{prop}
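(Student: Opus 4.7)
I would prove the statement for $\beta_{\le w}$; the case of $\beta_{\ge w}$ follows by a dual argument (or by applying the result to the opposite category). Given a distinguished triangle $X \to Y \to Z \to X[1]$, the first move is to complete the functorial morphism $\beta_{\le w}X \to \beta_{\le w}Y$ to a distinguished triangle
\[
\beta_{\le w}X \to \beta_{\le w}Y \to C \to (\beta_{\le w}X)[1].
\]
Since $\fD_{\le w}$ is thick (in particular, closed under cones and shifts), the object $C$ belongs to $\fD_{\le w}$.

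Next, using TR3 together with the $3 \times 3$ lemma for triangulated categories (a standard consequence of the octahedral axiom: a morphism of distinguished triangles can be completed to a $3 \times 3$ diagram with all rows and columns distinguished), I can choose an extension $C \to Z$ of the partial morphism of triangles with vertical maps $\beta_{\le w}X \to X$ and $\beta_{\le w}Y \to Y$ so that the cones of the three vertical arrows form a distinguished triangle
\[
\beta_{\ge w+1}X \to \beta_{\ge w+1}Y \to \mathrm{cone}(C \to Z) \to.
\]
Thickness of $\fD_{\ge w+1}$ then places $\mathrm{cone}(C \to Z)$ in $\fD_{\ge w+1}$.

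Consequently, the distinguished triangle $C \to Z \to \mathrm{cone}(C \to Z) \to$ has first term in $\fD_{\le w}$ and third term in $\fD_{\ge w+1}$; by the uniqueness clause of Proposition~\ref{prop:baric-basic}, it is canonically identified with the baric truncation triangle of $Z$. Thus $C \cong \beta_{\le w}Z$ and the map $C \to Z$ is the canonical truncation $\beta_{\le w}Z \to Z$. A final check, using the universal property of the right adjoint $\beta_{\le w}$ applied to the composition $\beta_{\le w}Y \to Y \to Z$, identifies the map $\beta_{\le w}Y \to C$ with $\beta_{\le w}$ applied to $Y \to Z$; hence the triangle produced in the first step is literally the image of the given triangle under $\beta_{\le w}$.

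The main obstacle is the appeal to the $3\times3$ lemma; once thickness is used twice (once to locate $C$ in $\fD_{\le w}$ and once to locate $\mathrm{cone}(C \to Z)$ in $\fD_{\ge w+1}$), the uniqueness of baric truncations takes over. Note that the argument crucially uses the shift-stability of $\fD_{\le w}$ and $\fD_{\ge w+1}$ in \emph{both} directions; it is precisely this feature — absent for $t$-structures — that forces the baric truncation functors to be exact.
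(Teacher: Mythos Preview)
Your proof is correct and follows essentially the same approach as the paper: both build a $3\times 3$ diagram via the $9$-lemma (the paper cites \cite[Proposition~1.1.11]{bbd}), use that $\fD_{\le w}$ and $\fD_{\ge w+1}$ are triangulated subcategories to place the new third-column objects in the correct halves, and conclude by the uniqueness clause of Proposition~\ref{prop:baric-basic}. The only differences are cosmetic --- you start from the top row and extend down via TR3, whereas the paper starts from the commutative square of counit maps and completes --- and your final naturality check on the morphism $\beta_{\le w}Y \to C$ is a detail the paper leaves implicit.
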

\begin{proof}
Let $X \to Y \to Z \to$ be a distinguished triangle in $\fD$, and consider
the natural morphism $\beta_{\le w}X \to X$.  The composition of this
morphism with $X \to Y$ factors through $\beta_{\le w}Y \to Y$ (since
$\Hom(\beta_{\le w}X, Y) \cong \Hom(\beta_{\le w}X, \beta_{\le w}Y)$), so
we obtain a commutative diagram
\[
\xymatrix@=10pt{
\beta_{\le w}X \ar[r]\ar[d] & \beta_{\le w}Y \ar[d] \\
X \ar[r] & Y}
\]
Let us complete this diagram using the
$9$-lemma~\cite[Proposition~1.1.11]{bbd}:
\[
\xymatrix@=10pt{
\beta_{\le w}X \ar[r]\ar[d] & \beta_{\le w}Y \ar[r]\ar[d] & Z' \ar[r]\ar[d]
& \\
X \ar[r]\ar[d] & Y\ar[r]\ar[d] & Z \ar[r]\ar[d] & \\
\beta_{\ge w+1}X \ar[r]\ar[d] & \beta_{\ge w+1}Y \ar[r]\ar[d] & Z''
\ar[r]\ar[d] & \\
&&&}
\]
Since $\fD_{\le w}$ and $\fD_{\ge w+1}$ are full triangulated subcategories
of $\fD$, we see that $Z' \in \fD_{\le w}$ and $Z'' \in \fD_{\ge w+1}$. 
But then Proposition~\ref{prop:baric-basic} tells us that $Z' \cong
\beta_{\le w}Z$ and $Z'' \cong \beta_{\ge w}Z$, so we obtain distinguished
triangles
\[
\beta_{\le w}X \to \beta_{\le w}Y \to \beta_{\le w}Z \to
\qquad\text{and}\qquad
\beta_{\ge w+1}X \to \beta_{\ge w+1}Y \to \beta_{\ge w+1}Z \to,
\]
as desired.
\end{proof}

\begin{defn}
Let $\fD$ be a triangulated category equipped with a baric structure
$\typicalbaric$.
We will use the following terminology:

\begin{enumerate}
\item The adjoints $\beta_{\le w}$ and $\beta_{\ge w}$ to the inclusions 
$\fD_{\le w} \hto \fD$ and $\fD_{\ge w} \hto \fD$ are called \emph{baric 
truncation functors}.

\item The baric structure is \emph{bounded} if for each object $A \in
\fD$, 
there exist integers $v, w$ such that $A \in \fD_{\ge v} \cap \fD_{\le w}$.

\item It is \emph{nondegenerate} if there is no nonzero object belonging to
all 
$\fD_{\le w}$ or to all $\fD_{\ge w}$.  Note that a bounded
baric structure is automatically nondegenerate.

\item Let $\fD'$ be another triangulated category, and suppose it is 
equipped with a baric structure $(\{\fD'_{\le w}\}, \{\fD'_{\ge w}\})$.  
A functor of triangulated categories $F: \fD \to \fD'$ is said to be 
\emph{left baryexact} if $F(\fD_{\ge w}) \subset \fD'_{\ge w}$ for all 
$w \in \Z$, and \emph{right baryexact} if $F(\fD_{\le w}) \subset 
\fD'_{\le w}$ for all $w \in \Z$.
\end{enumerate}
\end{defn}

Let us also record the following definitions, though we will not use them
until later in the paper.

\begin{defn}
Let $\fD$ be a triangulated category equipped with a baric structure
$\typicalbaric$.

\begin{enumerate}
\item Suppose $\fD$ is equipped with an involutive antiequivalence $\D: \fD
\to \fD$.  
The baric structure is \emph{self-dual} if $\D(\fD_{\le w}) = \fD_{\ge
-w}$.

\item Suppose $\fD$ has the structure of a tensor category, with tensor
product $\otimes$.  
The baric structure is \emph{multiplicative} with respect to $\otimes$ if 
if for any $A \in \fD_{\le v}$ and $B \in \fD_{\le w}$, we have $A \otimes
B \in \fD_{\le v+w}$.

\item Suppose $\fD$ has an internal Hom functor $\cHom$.  The baric
structure is \emph{multiplicative}
with respect to $\cHom$ if for any $A \in \fD_{\leq v}$ and $B \in 
\fD_{\geq w}$, we have 
$\cHom(A,B) \in \fD_{\geq w -v}$.
\end{enumerate}
Note that whenever we have an adjunction between $\otimes$ and $\cHom$, the
multiplicativity conditions
are equivalent. 
\end{defn}

\subsection{Staggering}

Below, if $\fD$ is equipped with a $t$-structure $(\fD^{\le 0}, \fD^{\ge
0})$, 
we write $\fC = \fD^{\le 0} \cap \fD^{\ge 0}$ for its heart, and we denote
the 
associated truncation functors by $\tau^{\le n}$ and $\tau^{\ge n}$.  The
$n$th 
cohomology functor associated to the $t$-structure is denoted $h^n: \fD \to
\fC$.

\begin{defn}\label{defn:compat}
Let $\fD$ be a triangulated category equipped with both a $t$-structure 
and a baric structure.  These structures are said to be \emph{compatible} 
if $\tau^{\le n}$ and $\tau^{\ge n}$ are right baryexact, and $\beta_{\le
w}$ 
and $\beta_{\ge w}$ are left $t$-exact.
\end{defn}

\begin{rmk}
Of course there is a dual notion of compatibility, but it does not seem to
arise
as often.
\end{rmk}

\begin{defn}\label{defn:stag}
Let $\fD$ be a triangulated category equipped with compatible $t$- and
baric structures.  Define two full subcategories of $\fD$ as follows:
\begin{align*}
\sD^{\le 0} &= \{A \in D \mid \text{$h^k(A) \in \fD_{\le -k}$ for all $k
\in \Z$} \}, \\
\sD^{\ge 0} &= \{B \in D \mid \text{$\beta_{\le k}B \in \fD^{\ge -k}$ for
all $k \in \Z$} \}.
\end{align*}
Assume that the pair $(\sD^{\le 0}, \sD^{\ge 0})$ constitutes a
$t$-structure. It is called the \emph{staggered $t$-structure}, or the
$t$-structure obtained by \emph{staggering} the original $t$-structure with
respect to the given baric structure.
\end{defn}

As usual, we let $\sD^{\le n} = \sD^{\le 0}[-n]$ and $\sD^{\ge n} =
\sD^{\ge 0}[-n]$.

\begin{lem}\label{lem:compat}
Let $\fD$ be a triangulated category equipped with compatible $t$- and
baric structures.  Assume the $t$-structure is nondegenerate.
\begin{enumerate}
\item $A \in \fD_{\le w}$ if and only if $h^k(A) \in \fD_{\le w}$ for all
$k$. \label{it:lth}
\item $B \in \fD_{\ge w}$ if and only if $\beta_{\le w-1}\tau^{\le k}B \in
\fD^{\ge k+2}$ for all $k$.\label{it:gth}
\item We have \label{it:gthom}
\[
\fD_{\ge w} \cap \fC = \{ B \in \fC \mid
\text{$\Hom^k(A,B) = 0$ for all $A \in \fD_{\le w} \cap \fC$ and all $k \ge
0$} \}.
\]
\item $\fD_{\le w} \cap \fC$ is a Serre subcategory of $\fC$, and $\fD_{\ge
w} \cap \fC$ is stable under extensions.\label{it:ltserre}
\item $\sD^{\le 0}$ and $\sD^{\ge 0}$ are stable under extensions.
\label{it:ssext}
\item $\fD^{\le k} \cap \fD_{\le w} \subset \sD^{\le k+w}$, and $\fD^{\ge
k} \cap \fD_{\ge w} \subset \sD^{\ge k+w}$. \label{it:sscap}
\end{enumerate}
\end{lem}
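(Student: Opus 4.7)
The plan is to prove the six parts in a careful order to avoid a circular dependence: first the Serre claim in $(4)$, then $(1)$, then $(2)$, then $(3)$, then the extension claim in $(4)$, and finally $(5)$ and $(6)$. The overall strategy throughout is to shuttle information between the two structures using the canonical triangles, adjunctions, and the compatibility axioms.

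For the Serre half of $(4)$, I begin with a short exact sequence $0 \to A \to B \to C \to 0$ in $\fC$ with $B \in \fD_{\le w}$, and apply $\beta_{\ge w+1}$ to its triangle. Since $\beta_{\ge w+1}B = 0$ I get $\beta_{\ge w+1}C \cong \beta_{\ge w+1}A[1]$; left $t$-exactness puts both sides in $\fD^{\ge 0}$, forcing $\beta_{\ge w+1}A \in \fD^{\ge 1}$. The canonical map $A \to \beta_{\ge w+1}A$ then lives in $\Hom(\fD^{\le 0}, \fD^{\ge 1}) = 0$, so the baric triangle of $A$ splits, exhibiting $\beta_{\ge w+1}A[-1] \in \fD^{\ge 2}$ as a summand of $A \in \fC \subset \fD^{\le 0}$; by nondegeneracy of the $t$-structure this summand is zero, so $A \in \fD_{\le w}$, and symmetrically $C \in \fD_{\le w}$. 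For $(1)$, the forward direction is right baryexactness of $\tau^{\le n}$ and $\tau^{\ge n}$ applied iteratively. For the converse, set $X = \beta_{\ge w+1}A$; using the cohomology LES of $\beta_{\le w}A \to A \to X$ together with the Serre property just established, each $h^k(X) \in \fD_{\le w} \cap \fC$. If $X \ne 0$, pick the smallest $k_0$ with $h^{k_0}(X) \ne 0$; the canonical map $h^{k_0}(X)[-k_0] \to X$ is nonzero (it realizes the identity on $h^{k_0}(X)$ under the $t$-truncation adjunction), but lies in $\Hom(\fD_{\le w}, \fD_{\ge w+1}) = 0$ by baric axiom $(2)$, a contradiction.

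For $(2)$, apply $\beta_{\le w-1}$ to the truncation triangle $\tau^{\le k}B \to B \to \tau^{\ge k+1}B$: the forward direction uses $\beta_{\le w-1}B = 0$ plus left $t$-exactness to place $\beta_{\le w-1}\tau^{\le k}B \cong \beta_{\le w-1}\tau^{\ge k+1}B[-1] \in \fD^{\ge k+2}$; the converse shows $\beta_{\le w-1}B \in \fD^{\ge k+1}$ for every $k$ from the same triangle, hence $\beta_{\le w-1}B = 0$ by nondegeneracy. For $(3)$, the nontrivial direction $(\supset)$ sets $C = \beta_{\le w-1}B \in \fD^{\ge 0}$, notes via $(1)$ that each $h^k(C) \in \fD_{\le w-1} \cap \fC \subset \fD_{\le w} \cap \fC$, and exploits the chain
\[
\Hom(h^k(C), h^k(C)) = \Hom(h^k(C), C[k]) = \Hom(h^k(C), B[k]) = 0,
\]
where the first equality uses $C \in \fD^{\ge k}$ and the $t$-truncation adjunction, the second uses the baric adjunction for $\beta_{\le w-1}$, and the last vanishing is the hypothesis; inductively $C \in \fD^{\ge k}$ for every $k$, so $C = 0$. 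The direction $(\subset)$ uses baric axiom $(2)$ after splitting $A$ via its own baric triangle $\beta_{\le w-1}A \to A \to \beta_{\ge w}A$, with care in the indexing to match the orthogonality provided by the axiom.

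The remaining parts then chain off. The extension claim in $(4)$ uses the $\Hom$ LES together with $(3)$. Part $(5)$ for $\sD^{\le 0}$ uses the cohomology LES plus the Serre property from $(4)$; for $\sD^{\ge 0}$ it uses that $\beta_{\le k}$ preserves distinguished triangles and that $\fD^{\ge -k}$ is extension-closed. Part $(6)$ is a direct check: for the first inclusion, distribute $A \in \fD_{\le w}$ across cohomology via $(1)$ and combine with $h^j(A) = 0$ for $j > k$; for the second, split on whether $j < w$ (where $\beta_{\le j}B = 0$) or $j \ge w$ (where left $t$-exactness and $B \in \fD^{\ge k}$ yield $\beta_{\le j}B \in \fD^{\ge k} \subset \fD^{\ge -j+k+w}$, the last inclusion holding precisely because $j \ge w$). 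The main obstacle is the interlocking of $(1)$ and $(4)$, which the splitting argument in the Serre step above breaks; a secondary delicacy is the careful adjunction bookkeeping in $(3)$.
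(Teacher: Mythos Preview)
Your argument is essentially correct and follows a genuinely different path from the paper's. The paper proves parts~(1) and~(3) via the spectral sequence $E_2^{ab} = \Hom(h^{-b}(A), B[a]) \Rightarrow \Hom(A, B[a+b])$, and only afterward deduces the Serre property~(4) from the left $t$-exactness of baric truncation. You invert this dependency: by establishing the Serre half of~(4) first through a direct splitting argument, you can then handle~(1) and~(3) by elementary triangle and adjunction manipulations, never invoking a spectral sequence. This is more self-contained; the paper's route is shorter on the page but leans on external machinery. Parts~(2), (5), and~(6) are treated essentially identically in both approaches.

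Two small slips. In your Serre argument for~(4), when the map $A \to \beta_{\ge w+1}A$ vanishes the baric triangle splits as $\beta_{\le w}A \cong A \oplus \beta_{\ge w+1}A[-1]$, so $\beta_{\ge w+1}A[-1]$ is a summand of $\beta_{\le w}A$, not of $A$ as you wrote; the conclusion still follows, since $A$ is then a summand of $\beta_{\le w}A \in \fD_{\le w}$ and thickness gives $A \in \fD_{\le w}$ (or equivalently $\beta_{\ge w+1}A[-1] \in \fD_{\le w} \cap \fD_{\ge w+1} = 0$). Second, in your proof of~(1) you pick the \emph{smallest} $k_0$ with $h^{k_0}(X) \ne 0$, but nondegeneracy alone does not guarantee such a minimum exists; you are implicitly assuming $X = \beta_{\ge w+1}A$ is bounded below, which holds whenever $A$ is (by left $t$-exactness of $\beta_{\ge w+1}$) but not in general. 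The paper's spectral-sequence approach avoids this particular issue. Finally, your description of the direction $(\subset)$ in~(3) is more elaborate than needed: once the index is read as $\fD_{\le w-1}$ (the paper's own proof uses $w-1$, so the statement as printed has a typo), the vanishing $\Hom(A, B[k]) = 0$ is immediate from baric orthogonality and shift-stability of $\fD_{\ge w}$, with no need to decompose $A$.
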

\begin{proof}
\eqref{it:lth}~Since $\fD_{\le w}$ is stable under $\tau^{\le k}$ and
$\tau^{\ge k}$, it is clear that $A \in \fD_{\le w}$ implies that $h^k(A)
\in \fD_{\le w}$.  Conversely, suppose $h^k(A) \in \fD_{\le w}$ for all
$k$.  Recall (e.g. \cite[Proposition 4.4.6]{verdier}) that we have a spectral sequence
\begin{equation}\label{eqn:e2-1}
E_2^{ab} = \Hom(h^{-b}(A), B[a]) \quad\Longrightarrow\quad
\Hom(A,B[a+b]).
\end{equation}
Since $\Hom(h^{-b}(A), B[a]) = 0$ for all $B \in \fD_{\ge w+1}$ and all $a,
b \in \Z$, we see that $\Hom(A,B) = 0$ for all $B \in \fD_{\ge w+1}$, and
hence that $A \in \fD_{\le w}$.

\eqref{it:gth}~Consider the distinguished triangle
\[
\beta_{\le w-1}\tau^{\le k}B \to \beta_{\le w-1}B \to \beta_{\le
w-1}\tau^{\ge k+1}B \to.
\]
The last term is always in $\fD^{\ge k+1}$ by the left $t$-exactness of
$\beta_{\le w-1}$.  If $B \in \fD_{\ge w}$, so that $\beta_{\le w-1}B = 0$,
then $\beta_{\le w-1}\tau^{\le k}B \cong (\beta_{\le w-1}\tau^{\ge
k+1}B)[-1] \in \fD^{\ge k+2}$.  Conversely, if the $t$-structure is
nondegenerate, and if $\beta_{\le w-1}\tau^{\le k}B \in \fD^{\ge k+2}$ for
all $k$, the distinguished triangle above shows that $\beta_{\le w-1}B \in
\fD^{\ge k+1}$ for all $k$, and hence that $\beta_{\le w-1}B = 0$, so $B
\in \fD_{\ge w}$, as desired.

\eqref{it:gthom}~If $B \in \fD_{\ge w} \cap \fC$, then clearly
$\Hom(A[-k],B) =  0$ for all $A \in \fD_{\le w-1} \cap \fC$ and all $k \ge
0$, since $A[-k] \in \fD_{\le w-1}$ for all $k$.  Conversely, if
$\Hom(A,B[k]) =  0$ for all $A \in \fD_{\le w-1} \cap \fC$ and all $k \ge
0$, the spectral sequence~\eqref{eqn:e2-1} shows that $\Hom(A,B) = 0$ for
all $A \in \fD_{\le w-1}$, and hence that $B \in \fD_{\ge w}$.

\eqref{it:ltserre}~Suppose we have a short exact sequence
\[
0 \to A \to B \to C \to 0
\]
in $\fC$.  If $A$ and $C$ are in $\fD_{\le w}$, then $B$ must be as well,
since $\fD_{\le w}$ is stable under extensions.  Conversely, suppose $B \in
\fD_{\le w}$.  Assume that $C \notin \fD_{\le w}$, and consider the
distinguished triangle
\[
\beta_{\le w}C \to C \to \beta_{\ge w+1}C \to.
\]
By left $t$-exactness of the baric truncation functors, we have an exact
sequence
\[
0 \to h^0(\beta_{\le w}C) \to C \to h^0(\beta_{\ge w+1}C).
\]
We must have $h^0(\beta_{\ge w+1}C) \ne 0$: otherwise, we would have $C
\cong h^0(\beta_{\le w}C) \in \fD_{\le w}$.  Next, from the distinguished
triangle
\[
\beta_{\ge w+1}A \to 0 \to \beta_{\ge w+1}C \to,
\]
we see that $\beta_{\ge w+1}A \cong \beta_{\ge w+1}C[-1]$.  In particular,
$h^0(\beta_{\ge w+1}A) = 0$.  But then the exact sequence
\[
0 \to h^0(\beta_{\le w}A) \to A \to h^0(\beta_{\ge w+1}A) = 0
\]
shows that $A \cong h^0(\beta_{\le w}A) \in \fD_{\le w}$, and hence that
$\beta_{\ge w+1}A = 0$ and $\beta_{\ge w+1}C = 0$.  Thus, $A$ and $C$ are
in $\fD_{\le w}$, as desired.

That $\fD_{\ge w} \cap \fC$ is stable under extensions follows immediately
from the fact that $\fD_{\ge w}$ is stable under extensions.

\eqref{it:ssext}~Let $A \to B \to C \to$ be a distinguished triangle with
$A \in \sD^{\le 0}$ and $C \in \sD^{\le 0}$, and consider the exact
sequence
\[
h^k(A) \overset{f}{\to} h^k(B) \overset{g}{\to} h^k(C).
\]
Since $h^k(A) \in \fD_{\le -k}$, its quotient $\im f$ is in $\fD_{\le -k}$
as well.  Similarly, $\im g \in \fD_{\le -k}$ because it is a subobject
of $h^k(C)$.  Now, from the short exact sequence $0 \to \im f \to h^k(B)
\to \im g \to 0$, we deduce that $h^k(B) \in \fD_{\le -k}$.  Thus, $B \in
\sD^{\le 0}$.

On the other hand, if $A \to B \to C \to$ is a distinguished triangle with
$A, C \in \sD^{\ge 0}$, consider the distinguished triangle
\[
\beta_{\le k}A \to \beta_{\le k}B \to \beta_{\le k}C \to.
\]
Since $\beta_{\le k}A$ and $\beta_{\le k}C$ lie in $\fD^{\ge -k}$,
$\beta_{\le k}B \in \fD^{\ge -k}$ as well, so $B \in \sD^{\ge 0}$.

\eqref{it:sscap}~If $A \in \fD^{\le k} \cap \fD_{\le w}$, then $h^i(A[k+w])
= h^{i+k+w}(A) = 0$ if $i > -w$, and $h^i(A[k+w]) \in \fD_{\le w} \subset
\fD_{\le -i}$ if $i \le -w$.  Thus, $A[k+w] \in \sD^{\le 0}$, or $A \in
\sD^{\le k+w}$.  Next, suppose $B \in \fD^{\ge k} \cap \fD_{\ge w}$.  Then
$\beta_{\le i}B[k+w] = 0$ if $i < w$, and $\beta_{\le i}B[k+w] \in \fD^{\ge
k}[k+w] = \fD^{\ge -w} \subset \fD^{\ge -i}$ if $i \ge w$.  Hence, $B[k+w]
\in \sD^{\ge 0}$, or $B \in \sD^{\ge k+w}$.
\end{proof}

\begin{prop}\label{prop:compat}
Let $\fD$ be a triangulated category equipped with compatible $t$- and
baric structures.  Assume the $t$-structure is nondegenerate.
\begin{enumerate}
\item $\Hom(A,B) = 0$ for all $A \in \sD^{\le 0}$ and $B \in \sD^{\ge 1}$.
\label{it:sshom}
\item If $\Hom(A,B) = 0$ for all $B \in \sD^{\ge 1}$, then $A \in \sD^{\le
0}$.  If $\Hom(A,B) = 0$ for all $A \in \sD^{\le 0}$, then $B \in \sD^{\ge
1}$. \label{it:ssorth}
\item $\sD^{\le 0} \subset \sD^{\le 1}$ and $\sD^{\ge 0} \supset \sD^{\ge
1}$. \label{it:ssshift}
\item If the baric structures is also nondegenerate, there is no nonzero
object belonging to all $\sD^{\le n}$ or to all $\sD^{\ge n}$.
\label{it:ssnondeg}
\item If the $t$- and baric structures are bounded, then for any $A \in D$,
there are integers $n, m$ such that $A \in \sD^{\ge n} \cap \sD^{\le m}$.
\label{it:ssbdd}
\end{enumerate}
\end{prop}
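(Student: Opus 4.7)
My overall plan is to prove (1) via the spectral sequence \eqref{eqn:e2-1} combined with the baric adjunction, to handle (2) by contrapositive with explicit witnesses, and to read off (3)--(5) directly from Lemma~\ref{lem:compat} and the definitions of $\sD^{\le 0}$, $\sD^{\ge 0}$. For (1), I would apply the spectral sequence $E_2^{a,b} = \Hom(h^{-b}(A), B[a]) \Rightarrow \Hom(A, B[a+b])$ to compute $\Hom(A,B)$. On the antidiagonal $a+b = 0$, the term $\Hom(h^{-b}(A), B[-b])$ has $h^{-b}(A) \in \fC \cap \fD_{\le b}$ (since $A \in \sD^{\le 0}$), so by the baric adjunction it equals $\Hom(h^{-b}(A), (\beta_{\le b}B)[-b])$. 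Because $B \in \sD^{\ge 1}$ forces $\beta_{\le b}B \in \fD^{\ge 1-b}$, the shift $(\beta_{\le b}B)[-b]$ lies in $\fD^{\ge 1}$, and the Hom vanishes as $h^{-b}(A) \in \fD^{\le 0}$. Hence $\Hom(A,B) = 0$.

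For (2) I would argue by contrapositive in each direction, producing explicit witnesses. If $A \notin \sD^{\le 0}$, pick $k$ with $h^k(A) \notin \fD_{\le -k}$ and set $B := \beta_{\ge -k+1}(h^k(A))[-k]$. Using the commutation formulas of Proposition~\ref{prop:baric-basic} together with left $t$-exactness of the $\beta$-functors, $\beta_{\le j}B$ vanishes for $j \le -k$ and lies in $\fD^{\ge k} \subset \fD^{\ge 1-j}$ for $j \ge -k+1$, so $B \in \sD^{\ge 1}$. Composing the canonical morphism $A \to h^k(A)[-k]$ (the one inducing $\mathrm{id}_{h^k(A)}$ under $\Hom(A, h^k(A)[-k]) \cong \End(h^k(A))$) with $h^k(A)[-k] \to B$ produces a morphism whose $h^k$ is the natural map $h^k(A) \to h^0(\beta_{\ge -k+1}h^k(A))$; if this vanished, the long exact sequence of $\beta_{\le -k}h^k(A) \to h^k(A) \to \beta_{\ge -k+1}h^k(A)$ would identify $h^k(A)$ with $h^0(\beta_{\le -k}h^k(A)) \in \fD_{\le -k}$ (by Lemma~\ref{lem:compat}\eqref{it:lth}), contradicting the choice of $k$. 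In the dual direction, if $B \notin \sD^{\ge 1}$ pick $k$ with $\beta_{\le k}B \notin \fD^{\ge 1-k}$ and set $A := \tau^{\le -k}\beta_{\le k}B$, nonzero by construction. Its cohomologies satisfy $h^i(A) = h^i(\beta_{\le k}B) \in \fD_{\le k} \subset \fD_{\le -i}$ for $i \le -k$ and vanish otherwise, so Lemma~\ref{lem:compat}\eqref{it:lth} gives $A \in \sD^{\le 0}$. The chain
\[
\Hom(A, B) = \Hom(A, \beta_{\le k}B) = \Hom(A, \tau^{\le -k}\beta_{\le k}B) = \End(A),
\]
justified by $A \in \fD_{\le k}$ and $A \in \fD^{\le -k}$, then yields the required nonzero morphism.

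The remaining items are routine. For (3), unwinding $\sD^{\le 1} = \sD^{\le 0}[-1]$ reduces the first containment to the baric axiom $\fD_{\le -j} \subset \fD_{\le 1-j}$, and dually for $\sD^{\ge 0} \supset \sD^{\ge 1}$. For (4), any $A \in \bigcap_n \sD^{\le n}$ has $h^m(A) \in \bigcap_k \fD_{\le -k} = 0$ by baric nondegeneracy, hence $A = 0$ by $t$-nondegeneracy; the statement for $\sD^{\ge n}$ is analogous, using that $B \in \sD^{\ge n}$ translates to $\beta_{\le k}B \in \fD^{\ge n-k}$ for all $k$. For (5), Lemma~\ref{lem:compat}\eqref{it:sscap} applied to the simultaneous $t$- and baric-bounds for $A$ gives the required $\sD$-bounds. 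The main subtlety is the convergence of the spectral sequence in (1); in the fully unbounded setting one may need to supplement by a dévissage on the cohomology of $A$, reducing to the one-cohomology case where a direct adjunction replaces the spectral sequence.
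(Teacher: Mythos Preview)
Your proof is correct and follows essentially the same route as the paper: part~(1) via the spectral sequence~\eqref{eqn:e2-1} plus the baric adjunction, part~(2) by contrapositive with an explicit witness, parts~(3) and~(5) by direct unwinding and Lemma~\ref{lem:compat}\eqref{it:sscap}.  Two points are worth noting.  In the first half of~(2) your witness $B=\beta_{\ge -k+1}(h^k(A))[-k]$ differs from the paper's choice $\beta_{\ge -k+1}\tau^{\ge k}A$; both lie in $\fD^{\ge k}\cap\fD_{\ge -k+1}\subset\sD^{\ge 1}$ by Lemma~\ref{lem:compat}\eqref{it:sscap}, but the paper's adjunction map $A\to\beta_{\ge -k+1}\tau^{\ge k}A$ is nonzero for free (since $h^k(A)\notin\fD_{\le -k}$ forces $\tau^{\ge k}A\notin\fD_{\le -k}$ and hence the target is nonzero), whereas your argument requires the extra $h^k$ computation you carried out.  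Conversely, for the $\sD^{\ge n}$ half of~(4) your sketch is actually cleaner than the paper's: from $B\in\bigcap_n\sD^{\ge n}$ one gets $\beta_{\le k}B\in\bigcap_n\fD^{\ge n-k}=0$ by $t$-nondegeneracy, hence $B\in\bigcap_k\fD_{\ge k+1}=0$ by baric nondegeneracy, bypassing the paper's construction of an explicit map $\tau^{\le k}\beta_{\le w}A\to A$.
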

\begin{proof}
\eqref{it:sshom}~For any $k \in \Z$, $h^{-k}(A) \in \fD_{\le k}$, and
therefore $\Hom(h^{-k}(A),B[k]) \cong \Hom(h^{-k}(A), \beta_{\le k}B[k])$. 
But $\beta_{\le k}B \in \fD^{\ge k+1}$, so $\Hom(h^{-k}(A), \beta_{\le
k}B[k]) = 0$ for all $k$.  It follows from the spectral
sequence~\eqref{eqn:e2-1} that $\Hom(A,B) = 0$.

\eqref{it:ssorth}~Suppose $\Hom(A,B) = 0$ for all $B \in \sD^{\ge 1}$, and
suppose for some $k$, $h^k(A) \notin \fD_{\le -k}$.  That implies that
$\tau^{\ge k}A \notin \fD_{\le -k}$, so $\beta_{\ge -k+1}\tau^{\ge k}A \ne
0$.  In particular, the natural adjunction morphism $A \to \beta_{\ge
-k+1}\tau^{\ge k}A$ is nonzero.  However, $\beta_{\ge -k+1}\tau^{\ge k}A
\in \fD^{\ge k} \cap \fD_{\ge -k+1} \subset \sD^{\ge 1}$.  This contradicts
the assumption that $\Hom(A,B) = 0$ for all $B \in \sD^{\ge 1}$, so we must
have $h^k(A) \in \fD_{\le -k}$ for all $k$, and hence $A \in \sD^{\le 0}$.

On the other hand, if $\Hom(A,B) = 0$ for all $A \in \sD^{\le 0}$, a
similar argument involving the morphism $\tau^{\le -k}\beta_{\le k}B \to B$
shows that $B \in \sD^{\ge 1}$.

\eqref{it:ssshift}~If $A \in \sD^{\le 0}$, then $h^k(A[1]) = h^{k+1}(A) \in
\fD_{\le -k-1} \subset \fD_{\le -k}$, so $A[1] \in \sD^{\le 0}$, and hence
$\sD^{\le 0} \subset \sD^{\le 1}$.  Similarly, if $B \in \sD^{\ge 0}$, then
$\beta_{\le k}B[-1] \in \fD^{\ge -k+1} \subset \fD^{\ge -k}$, so $B[-1] \in
\sD^{\ge 0}$.

\eqref{it:ssnondeg}~Suppose $A \in \sD^{\le n}$ for all $n$.  Then $h^k(A)
\in \fD_{\le n-k}$ for all $n$ and all $k$.  The nondegeneracy of the baric
structure implies that $h^k(A) = 0$; then, the nondegeneracy of the
$t$-structure implies that $A = 0$.  Next, suppose $A \in \sD^{\ge n}$ for
all $n$, and assume $A \ne 0$.  Choose some $w$ such that $\beta_{\le w}A
\ne 0$, and then choose some $k$ such that $\tau^{\le k}\beta_{\le w}A \ne
0$.  By right baryexactness of $\tau^{\le k}$, we know that $\tau^{\le
k}\beta_{\le w}A \in \fD_{\le w}$, so we obtain a sequence of isomorphisms
\[
\Hom(\tau^{\le k}\beta_{\le w}A, \tau^{\le k}\beta_{\le w}A)
\cong \Hom(\tau^{\le k}\beta_{\le w}A, \beta_{\le w}A)
\cong \Hom(\tau^{\le k}\beta_{\le w}A, A).
\]
In particular, the natural map $\tau^{\le k}\beta_{\le w}A \to A$ is
nonzero.  But clearly $\tau^{\le k}\beta_{\le w}A \in \sD^{\le k+w}$, so $A
\notin \sD^{\ge k+w+1}$, a contradiction.

\eqref{it:ssbdd}~This follows from Lemma~\ref{lem:compat}\eqref{it:sscap}.
\end{proof}

We will not prove in general that $(\sD^{\le 0}, \sD^{\ge 0})$ is a
$t$-structure.

\begin{thm}\label{thm:stag-gen}
Let $\fD$ be a triangulated category endowed with compatible bounded,
nondegenerate $t$- and baric structures.  Suppose we have a function $\mu:
\fD \to \N$ with the following properties:
\begin{enumerate}
\item $\mu(X) = 0$ if and only if $X = 0$.
\item If $X \in \fD^{\ge n}$ but $X \notin \fD^{\ge n+1}$, then
$\mu(\tau^{\ge n+1}\beta_{\le -n}X) < \mu(X)$.
\end{enumerate}
Then $(\sD^{\le 0}, \sD^{\ge 0})$ is a bounded, nondegenerate $t$-structure
on $\fD$.
\end{thm}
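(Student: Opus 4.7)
The plan is to observe that Proposition~\ref{prop:compat} already supplies all $t$-structure axioms for $(\sD^{\le 0},\sD^{\ge 0})$ except one: the existence, for every $X \in \fD$, of a distinguished triangle $A \to X \to B \to$ with $A \in \sD^{\le 0}$ and $B \in \sD^{\ge 1}$. Specifically, parts \eqref{it:sshom}, \eqref{it:ssshift}, \eqref{it:ssnondeg}, and \eqref{it:ssbdd} of that proposition give Hom-orthogonality, the shift inclusions, nondegeneracy, and boundedness. So the entire content of the theorem reduces to constructing truncation triangles, and this is where the induction function $\mu$ enters.

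I would proceed by strong induction on $\mu(X)$. The base case $\mu(X)=0$, which forces $X=0$, is trivial. For the inductive step, assume $X \ne 0$ and choose, using boundedness and nondegeneracy of the $t$-structure, the smallest $n$ with $X \in \fD^{\ge n}$. The plan is to cut $X$ into three pieces, each of which is easy to place with respect to the staggered subcategories, and then reassemble. The three pieces come from combining the baric triangle $\beta_{\le -n}X \to X \to \beta_{\ge -n+1}X \to$ with the $t$-structure triangle $\tau^{\le n}\beta_{\le -n}X \to \beta_{\le -n}X \to \tau^{\ge n+1}\beta_{\le -n}X \to$.

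Using compatibility and Lemma~\ref{lem:compat}, I expect the three pieces to sort out as follows. Since $X \in \fD^{\ge n}$ and $\beta_{\le -n}$ is left $t$-exact, $\beta_{\le -n}X \in \fD^{\ge n}$, hence also $\beta_{\ge -n+1}X \in \fD^{\ge n} \cap \fD_{\ge -n+1}$, which is contained in $\sD^{\ge 1}$ by \eqref{it:sscap}. The object $\tau^{\le n}\beta_{\le -n}X$ is concentrated in degree $n$, with $n$-th cohomology in $\fD_{\le -n}$ by \eqref{it:lth}, so it lies in $\fD^{\le n} \cap \fD_{\le -n} \subset \sD^{\le 0}$, again by \eqref{it:sscap}. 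Finally, the key term $\tau^{\ge n+1}\beta_{\le -n}X$ satisfies $\mu(\tau^{\ge n+1}\beta_{\le -n}X) < \mu(X)$ by hypothesis~(2), so the induction hypothesis provides a triangle $A_1 \to \tau^{\ge n+1}\beta_{\le -n}X \to B_1 \to$ with $A_1 \in \sD^{\le 0}$ and $B_1 \in \sD^{\ge 1}$.

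To reassemble, I would apply the octahedral axiom twice. First, to the composition $\tau^{\le n}\beta_{\le -n}X \to \beta_{\le -n}X$ together with the truncation of $\tau^{\ge n+1}\beta_{\le -n}X$, I obtain an object $Y$ fitting into triangles $\tau^{\le n}\beta_{\le -n}X \to Y \to A_1 \to$ and $Y \to \beta_{\le -n}X \to B_1 \to$; extension-stability of $\sD^{\le 0}$ (Lemma~\ref{lem:compat}\eqref{it:ssext}) gives $Y \in \sD^{\le 0}$. Second, applying the octahedral axiom to $Y \to \beta_{\le -n}X \to X$ produces a triangle $Y \to X \to C \to$ whose cone $C$ sits in a triangle $B_1 \to C \to \beta_{\ge -n+1}X \to$, forcing $C \in \sD^{\ge 1}$ by extension-stability of $\sD^{\ge 1}$. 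This is the desired truncation of $X$. The main obstacle, though not particularly deep, is verifying that the inductive choice of $n$ and the two octahedral completions really do place all intermediate objects in the right subcategories; the role of $\mu$ is purely to guarantee the induction terminates, and the substantive input from the hypotheses is the compatibility already encoded in Lemma~\ref{lem:compat}.
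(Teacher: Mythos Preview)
Your proof is correct and follows essentially the same approach as the paper's: the paper also inducts on $\mu(X)$, decomposes $X$ into the same three pieces $\tau^{\le n}\beta_{\le -n}X$, $\tau^{\ge n+1}\beta_{\le -n}X$, and $\beta_{\ge -n+1}X$, and reassembles them, the only cosmetic difference being that the paper packages your two octahedral-axiom applications into the language of the ``$*$'' operation of \cite[\S1.3.9--1.3.10]{bbd} (writing $X \in \sD^{\le 0} * \sD^{\le 0} * \sD^{\ge 1} * \sD^{\ge 1} = \sD^{\le 0} * \sD^{\ge 1}$).
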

\begin{proof}
It will be convenient to use ``$*$'' operation on triangulated categories
({\it cf.} \cite[\S 1.3.9]{bbd}): given two classes of objects
$\cA, \cB \subset \fD$, we denote by $\cA * \cB$ the class of all objects
$X \in \fD$ such that there exists a distinguished triangle $A \to X \to B \to$ with $A \in \cA$ and $B \in \cB$.  In view of the preceding proposition, the present theorem will be proved once we show that every object of $\fD$ belongs to $\sD^{\le 0} * \sD^{\ge 1}$.  We proceed by
induction on $\mu(X)$.  If $\mu(X) = 0$, then $X = 0$, and there is nothing
to prove.  Otherwise, let $n$ be the smallest integer such that $h^n(X) \ne
0$.  Let $A_1 = \tau^{\le n}\beta_{\le -n} X$, $X' = \tau^{\ge
n+1}\beta_{\le -n} X$, and $B_1 = \beta_{\ge -n+1} X$.  It follows from
the right baryexactness of $\tau^{\le n}$ that $A_1 \in \sD^{\le 0}$, and,
similarly, it follows from the left $t$-exactness of $\beta_{\ge -n+1}$
that $B_1 \in \sD^{\ge 1}$.  Recall~\cite[Proposition~1.3.10]{bbd} that the ``$*$'' operation is associative.  By construction, we have
\[
X \in \{A_1\} * \{X'\} * \{B_1\} \subset \sD^{\le 0} * \{X'\} * \sD^{\ge 1}.
\]
Since $\mu(X') < \mu(X)$ by assumption, we know that $X' \in \sD^{\le 0} *
\sD^{\ge 1}$, and hence
\[
X \in \sD^{\le 0} * \sD^{\le 0} * \sD^{\ge 1} * \sD^{\ge 1}.
\]
Since $\sD^{\le 0}$ and $\sD^{\ge 1}$ are stable under extensions, we have
$\sD^{\le 0} * \sD^{\le 0} = \sD^{\le 0}$ and $\sD^{\ge 1} * \sD^{\ge 1} =
\sD^{\ge 1}$, so $X \in \sD^{\le 0} * \sD^{\ge 1}$, as desired.
\end{proof}

\section{Examples}
\label{sect:examples}

In this section, we exhibit several examples of baric structures occurring
``in nature.''  In the first one, the staggering operation of
Definition~\ref{defn:stag} is a new approach to a known $t$-structure.  In
two others, this operation gives what appears to be a previously unknown
$t$-structure.  The main example of this paper---baric structures on
derived categories of coherent sheaves---will be discussed in the next section.

\subsection{Perverse sheaves}

Let $X$ be a topologically stratified space (as in~\cite{gm:ih}), with all
strata of even real dimension.  (This example can be easily modified to
relax that condition, or to treat stratified varieties over a field
instead.) Let $D = D^b_c(X)$ be the bounded derived category of sheaves of
complex vector spaces that are constructible with respect to the given
stratification.  For any $w
\in \Z$, let $X_w$ be the union of all strata of dimension at most $2w$. 
(Thus, $X_w = \varnothing$ if $w < 0$.)  This is a closed subspace of $X$. 
Let $i_w: X_w \to X$ be the inclusion map.  Let $D_{\le w}$ be the full
subcategory consisting of complexes whose support is contained in $X_w$,
and let $D_{\ge w+1}$ be the full subcategory of complexes $\cF$ such that
$i_w^!\cF = 0$.

If $\cF \in D_{\le w}$ and $\cG \in D_{\ge w+1}$, then $\cF \cong
i_{w*}i_w^{-1}\cF$, and 
\[
\Hom(\cF,\cG) \cong \Hom(i_{w*}i_w^{-1}\cF,\cG) \cong \Hom(i_w^{-1}\cF,
i_w^!\cG) = 0.
\]
Next, let $j_{w+1}: (X \ssm X_w) \to X$ be the open inclusion of the
complement of $X_w$.  For any complex $\cF$, the distinguished triangle
\[
i_{w*}i_w^!\cF \to \cF \to (j_{w+1})_*j_{w+1}^{-1}\cF \to
\]
is one whose first term lies in $D_{\le w}$ and whose last term lies in
$D_{\ge w+1}$.  Thus, we see that $(\{D_{\le w}\}, \{D_{\ge w}\})_{w \in \Z}$ is a
baric structure on $D^b_c(X)$, with baric truncation functors
\[
\beta_{\le w} = i_{w*}i_w^!
\qquad\text{and}\qquad
\beta_{\ge w} = j_{w*}j_w^{-1}.
\]

It is easy to see that this baric structure is compatible with the standard
$t$-structure on $D$.  If $\cF$ is supported on $X_w$, it is obvious that
any truncation of it is as well, so $D_{\le w}$ is stable under $\tau^{\le
n}$ and $\tau^{\ge n}$.  On the other hand, it is clear from the formulas
above that $\beta_{\le w}$ and $\beta_{\ge w}$ are both left $t$-exact.

In the associated staggered $t$-structure $(\s D^{\le 0}, \s D^{\ge 0})$,
we have $\cF \in \s D^{\le 0}$ if and only if $h^k(\cF) \in D_{\le -k}$,
or, in other words,
\[
\dim \supp h^k(\cF) \le -2k.
\]
The staggered $t$-structure in this case is none other than the perverse
$t$-structure of middle perversity.

\subsection{Quasi-exceptional sets}

Let $\fD$ be a triangulated category.  A set of objects $\{\nabla^w\}_{w
\in \N}$ in $\fD$ indexed by nonnegative integers is called a
\emph{quasi-exceptional set} if the following conditions hold:
\begin{enumerate}
\item If $v < w$, then $\Hom(\nabla^v, \nabla^w[k]) =0$ for all $k \in
\Z$.\label{it:exc-hom}
\item For any $w \in \N$, $\Hom(\nabla^w, \nabla^w[k]) = 0$ if $k < 0$, and
$\End(\nabla^w)$ is a division ring.\label{it:exc:end}
\end{enumerate}
For $w \in \N$, let $\fD_{\le w}$ be the full triangulated subcategory of
$\fD$ generated by $\nabla^0, \ldots, \nabla^w$, and for an integer $w <
0$, let $\fD_{\le w}$ be the full triangulated subcategory containing only
zero objects. (Here, we are following the notation of~\cite{bez:qes}, but
this will turn out to be consistent with our notation for baric structures
as well.)  A quasi-exceptional set is \emph{dualizable} if there is another
collection of objects $\{\Delta_w\}_{w \in \N}$ such that
\begin{enumerate}
\setcounter{enumi}{2}
\item If $v > w$, $\Hom(\Delta_v, \nabla^w[k]) = 0$ for all $k \in
\Z$.\label{it:dexc-hom}
\item For any $w \in \N$, we have $\Delta_w \cong \nabla^w \mod
\fD_{\le w-1}$.\label{it:dexc-iso}
\end{enumerate}
The last condition means that $\Delta_w$ and $\nabla^w$ give rise to
isomorphic objects in the quotient category $\fD_{\le w}/\fD_{\le w-1}$.

Next, let $\fD_{\ge w}$ be the full triangulated subcategory generated by
the objects $\{\nabla^k \mid k \ge w\}$.  If $A \in \fD_{\le w}$ and $B
\in \fD_{\ge w+1}$, then Axiom~\eqref{it:exc-hom} above implies that
$\Hom(A,B) = 0$.  In addition, by~\cite[Lemma~4(e)]{bez:qes}, each
inclusion $\fD_{\le w} \to \fD_{\le w+1}$ admits a right adjoint $\iota_w$.
By a straightforward argument, these functors can be used to construct
distinguished triangles as in Definition~\ref{defn:baric}\eqref{it:dt}. 
Thus, $(\{\fD_{\le w}\}, \{\fD_{\ge w}\})_{w \in \Z}$ is a baric structure
on $\fD$.  It is nondegenerate and bounded by construction.

A key result of~\cite{bez:qes} is the construction of a
bounded, nondegenerate $t$-structure $(\fD^{\le 0}, \fD^{\ge 0})$
associated to a quasi-exceptional set.  This $t$-structure is defined as
follows (see~\cite[Proposition~1]{bez:qes}):
\begin{align*}
\fD^{\le 0} &= \langle \{ \Delta_w[n] \mid n \ge 0 \} \rangle, \\
\fD^{\ge 0} &= \langle \{ \nabla_w[n] \mid n \le 0 \} \rangle.
\end{align*}
Here, the notation $\langle S \rangle$ stands for the smallest strictly
full subcategory of $\fD$ that is stable under extensions and contains all
objects in the set $S$.  

We claim that this $t$-structure and the baric
structure defined above are compatible.  It follows from
Axiom~\eqref{it:exc-hom} above that
\[
\beta_{\le w}\nabla^v =
\begin{cases}
0 & \text{if $w < v$,} \\
\nabla^v & \text{if $w \ge v$,}
\end{cases}
\qquad\text{and}\qquad
\beta_{\ge w}\nabla^v =
\begin{cases}
0 & \text{if $w > v$,} \\
\nabla^v & \text{if $w \le v$.}
\end{cases}
\]
This calculation shows that the baric truncation functors preserve
$\fD^{\ge 0}$.  On the other hand, Axiom~\eqref{it:dexc-hom} implies that
$\tau^{\le 0}\nabla^w$ is contained in the subcategory generated by
$\Delta_0, \ldots, \Delta_w$, and that subcategory coincides with
$\fD_{\le w}$ by Axiom~\eqref{it:dexc-iso}.  Thus, $\tau^{\le 0}$
preserves $\fD_{\le w}$, so $\tau^{\ge 0}$ does as well.

Finally, given a nonzero object $X \in \fD$, let $a(X)$ be the smallest
integer $n$ such that $X \in \fD^{\ge -n}$, and let $b(X)$ be the smallest
integer $w$ such that $X \in \fD_{\le w}$.  Note that $b(X) \ge 0$.  Let
\[
\mu(X) =
\begin{cases}
\max \{a(X)+1,b(X)\} + 1 & \text{if $X \ne 0$,} \\
0 & \text{if $X = 0$.}
\end{cases}
\]
Clearly, $\mu$ takes nonnegative integer values, and $\mu(X) = 0$ if and
only if $X = 0$.  Moreover, if $a(X) = -n$ (which implies $\mu(X) \ge
-n+2$), then $a(\tau^{\ge n+1}\beta_{\le -n}X) \le -n-1$ and $b(\tau^{\ge
n+1}\beta_{\le -n}X) \le -n$, so $\mu(\tau^{\ge n+1}\beta_{\le-n}X) \le
-n+1$.  Thus, the conditions of Theorem~\ref{thm:stag-gen} are satisfied,
and there is a staggered $t$-structure $(\sD^{\le 0}, \sD^{\ge 0})$ on
$\fD$.

\subsection{Weight truncation for $\ell$-adic mixed constructible sheaves}

Let $X$ be a scheme of finite type over a finite field $\F_q$, and let
$\ell$ be a fixed prime number distinct from the characteristic of $\F_q$. 
Let $D = D^b_m(X,\Q_\ell)$ be the bounded derived category of mixed
constructible $\Q_\ell$-sheaves on $X$.  Let $\p h^n$ denote the $n$th
cohomology functor with respect to the perverse $t$-structure on $D$ with
respect to the middle perversity.  Let $D_{\le w}$ (resp.~$D_{\ge w}$) be
the full subcategory of $D^b_m(X,\Q_\ell)$ consisting of objects $\cF$ such
that $\p h^n(\cF)$ is of weight $\le w$ (resp.~$\ge w$) for all $n \in \Z$.
 S.~Morel has shown~\cite[Proposition~4.1.1]{mor} that $(\{D_{\le w}\}, \{D_{\ge
w}\})_{w\in\Z}$ is a baric structure on $D^b_m(X, \Q_\ell)$.

Since all objects in the heart of this $t$-structure have finite length, we
may attach a nonnegative integer $\mu(\cF)$ to each complex $\cF$ by the
formula
\[
\mu(\cF) = \sum_{n \in \Z} (\text{length of $\p h^n(\cF)$}).
\]
Moreover, by~\cite[Proposition~4.1.3]{mor}, the baric truncation functors
are $t$-exact for the perverse $t$-structure.  This implies that $\mu$
satisfies the assumptions of Theorem~\ref{thm:stag-gen}, so the perverse
$t$-structure on $D^b_m(X, \Q_\ell)$ can be staggered with respect to
Morel's baric structure to obtain a new $t$-structure.  The authors are not
aware of any previous appearance of this ``staggered-perverse''
$t$-structure on $\ell$-adic mixed constructible sheaves.

\subsection{Diagonal complexes}

We conclude with an example, due to T.~Ekedahl~\cite{eke}, of a
$t$-structure that closely resembles a staggered $t$-structure, although
it does not in general arise by staggering with respect to a baric
structure.  (The authors thank N.~Ramachandran for pointing out this work
to them.)  Let $\fD$ be a triangulated category with a bounded,
nondegenerate $t$-structure $(\fD^{\le 0}, \fD^{\ge 0})$, and as usual,
let $\fC = \fD^{\le 0} \cap \fD^{\ge 0}$.  Suppose $\{\fC_{\le w}\}_{w \in
\Z}$ is an increasing collection of Serre subcategories of $\fC$, and let
$\fC_{\ge w} = \{ B \in \fC \mid \text{$\Hom(A,B) = 0$ for all $A \in
\fC_{\le w-1}$} \}$.  Following Ekedahl, the collection $\{\fC_{\le w}\}$
is called a \emph{radical filtration} of the pair $(\fD, \fC)$ if the
following axioms hold:
\begin{enumerate}
\item For each object $A \in \fC$, there exist integers $v, w$ such that
$A \in \fC_{\ge v} \cap \fC_{\le w}$.
\item If $A \in \fC_{\le w}$ and $B \in \fC_{\ge v}$, then $\Hom^{v-w-1}(A,
B) = 0$ in $\fD$.
\end{enumerate}
If $(\fD, \fC)$ is equipped with a radical filtration, Ekedahl shows that
the categories
\begin{align*}
\tilde \fD^{\le 0} &= \{ A \in \fD \mid \text{$h^k(A) \in \fC_{\le-k}$ for
all $k \in \Z$} \}, \\
\tilde \fD^{\ge 0} &= \{ B \in \fD \mid \text{$h^k(B) \in \fC_{\ge-k}$ for
all $k \in \Z$} \}
\end{align*}
constitute a bounded, nondegenerate $t$-structure on $\fD$.  This is
called the \emph{diagonal $t$-structure}, and the objects in its heart are
called \emph{diagonal complexes}.

These formulas are, of course, strongly reminiscent of those in
Definition~\ref{defn:stag}.  Let us comment briefly on the relationship
between the two constructions.  Given a radical filtration, one could hope
to define a baric structure by setting $\fD_{\le w} = \{ A \in \fD \mid
h^k(A) \in \fC_{\le w}\text{ for all }k \in \Z \}$.  However, the
construction of a baric truncation functor turns out to require a stronger
Hom-vanishing condition between $\fC_{\le w}$ and $\fC_{\ge w+1}$ than
that stated above: one needs something like
Lemma~\ref{lem:compat}\eqref{it:gthom}.  Conversely, given a
baric structure, one could hope to define a radical filtration by setting
$\fC_{\le w} = \fD_{\le w} \cap \fC$.  This also fails, because a baric
structure imposes no higher Hom-vanishing conditions on the
right-orthogonal of $\fC_{\le w}$.

\section{Baric Structures on Coherent Sheaves, I}
\label{sect:baric-coh1}

In this section, we will investigate baric structures on derived categories
of coherent sheaves.  Let $X$ be a scheme of finite type over a noetherian
base scheme, and let $G$ be an affine group scheme over the same base,
acting on $X$.  We adopt the convention that all statements about
subschemes are to be understood in the $G$-invariant sense.  Thus, ``open
subscheme'' will always mean ``$G$-stable open subscheme,'' and
``irreducible'' will mean ``not a union of two proper $G$-stable closed
subschemes.''  This convention will remain in effect for the remainder of
the paper.

Let $\cg X$ and $\qg X$ denote the categories of
$G$-equivariant coherent and quasicoherent sheaves, respectively, on $X$.
One of the headaches of the subject is the need to work with three closely
related triangulated categories, which we denote as follows:
\begin{enumerate}
\item[(1)] $\dgb X$ is the bounded derived category of $\cg X$.
\item[(2)] $\dgm X$ is the bounded-above derived category of $\cg X$.
\item[(3)] $\dgp X$ is the full subcategory of the bounded-below derived
category of
$\qg X$ consisting of objects with coherent cohomology sheaves.
\end{enumerate}
$\dgb X$ will be the focus of our attention, but it will be necessary to
work $\dgm X$ and $\dgp X$ as well, simply because most operations on
sheaves take values in one of those categories, even when acting on bounded
complexes.

\begin{defn}
A \emph{baric structure} on $X$ is a baric structure on $\dgb X$ which is
compatible 
with the standard $t$-structure.
\end{defn}

\begin{rmk}
\label{rmks:schemebaric}
Implicit in this definition are some finiteness conditions; {\it e.g.}, it is
conceivable that there are interesting baric structures on $\dgp X$ that
take advantage of the fact that the functors $\beta_{\leq w}$ can take
bounded complexes to unbounded complexes.  Nevertheless, this is the
definition we will work with.
\end{rmk}

Inspired by parts~\eqref{it:lth} and~\eqref{it:gth} of
Lemma~\ref{lem:compat}, we define the following subcategories of $\dgm X$
and $\dgp X$:
\begin{align*}
\dsml Xw &= \{ \cF \in \dgm X \mid \text{$h^k(\cF) \in \dsl Xw$ for all
$k$} \}, \\
\dspg Xw &= \{ \cF \in \dgp X \mid \text{$\beta_{\le w-1}\tau^{\le k}\cF
\in \dgbg X{k+2}$ for all $k$} \}.
\end{align*}
It is unknown whether these categories constitute parts of baric structures
on $\dgm X$ or on $\dgp X$.  Nevertheless, they will be useful
in the sequel, in part because they admit the alternate characterization
given in the lemma below.  If $Y$ is another scheme endowed with a baric
structure, we will, by a minor abuse of terminology, call a functor $\dgm X
\to \dgm Y$ \emph{right baryexact} if it takes objects of $\dsml Xw$ to
objects of $\dsml Yw$.  Similarly, we call a functor $\dgp X \to \dgp Y$
\emph{left baryexact} if it takes objects of $\dspg Xw$ to $\dspg Yw$.

\begin{lem}\label{lem:unbdd-orth}
\begin{enumerate}
\item For $\cF \in \dgm X$, we have $\cF \in \dsml Xw$ if and only if
$\Hom(\cF,\cG) = 0$ for all $\cG \in \dsg X{w+1}$.
\item For $\cF \in \dgp X$, we have $\cF \in \dspg Xw$ if and only if
$\Hom(\cG,\cF) = 0$ for all $\cG \in \dsl X{w-1}$.
\end{enumerate}
\end{lem}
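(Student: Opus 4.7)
The plan is to mimic, at the unbounded level, the arguments of Lemma~\ref{lem:compat}\eqref{it:lth} and~\eqref{it:gth}.  The forward direction of each part will use the standard spectral sequence $E_2^{ab}=\Hom(h^{-b}(A),B[a])\Rightarrow\Hom(A,B[a+b])$ recalled in the proof of Lemma~\ref{lem:compat}, while the reverse direction will use a carefully chosen auxiliary object in the bounded baric subcategory.

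Part~(1), forward: given $\cF\in\dsml Xw$ and $\cG\in\dsg X{w+1}$, the upper $t$-bound of $\cF$ bounds $b$ below while the lower $t$-bound of $\cG$ bounds $b$ above in $E_2^{ab}$ for each fixed total degree $n=a+b$, so the spectral sequence converges, and each term vanishes since $h^{-b}(\cF)\in\fD_{\le w}$ and $\cG[a]\in\fD_{\ge w+1}$.  Part~(1), reverse (contrapositively): if $h^k(\cF)\notin\fD_{\le w}$, then $\tau^{\ge k}\cF\in\dgb X$ is not in $\fD_{\le w}$ by Lemma~\ref{lem:compat}\eqref{it:lth}, so $\cG:=\beta_{\ge w+1}\tau^{\ge k}\cF$ is a nonzero object of $\dsg X{w+1}$, and the unit $\tau^{\ge k}\cF\to\cG$ is nonzero (else the defining baric triangle would split, contradicting thickness of $\fD_{\le w}$).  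Precompose with $\cF\to\tau^{\ge k}\cF$: since $\cG\in\fD^{\ge k}$ by left $t$-exactness of $\beta_{\ge w+1}$, the Hom long exact sequence of $\tau^{\le k-1}\cF\to\cF\to\tau^{\ge k}\cF\to$ has $\Hom(\tau^{\le k-1}\cF[1],\cG)=0$, so $\Hom(\tau^{\ge k}\cF,\cG)\hookrightarrow\Hom(\cF,\cG)$ and the composite $\cF\to\cG$ is nonzero.

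Part~(2), forward: given $\cF\in\dspg Xw$ and $\cG\in\dsl X{w-1}$, pick $k$ with $\cG\in\fD^{\le k}$ and apply $\Hom(\cG,-)$ to $\tau^{\le k}\cF\to\cF\to\tau^{\ge k+1}\cF\to$.  Standard $t$-vanishing gives $\Hom(\cG,\tau^{\ge k+1}\cF)=0$; the adjunction isomorphism $\Hom(\cG,\tau^{\le k}\cF)\cong\Hom(\cG,\beta_{\le w-1}\tau^{\le k}\cF)$, combined with the hypothesis $\beta_{\le w-1}\tau^{\le k}\cF\in\fD^{\ge k+2}$ and $\cG\in\fD^{\le k}$, gives $\Hom(\cG,\tau^{\le k}\cF)=0$.

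The reverse direction of part~(2) is the main obstacle.  Contrapositively, assume $\cH_k:=\beta_{\le w-1}\tau^{\le k}\cF\notin\fD^{\ge k+2}$; note $\cH_k\in\dgb X$ because $\tau^{\le k}\cF$ is bounded when $\cF\in\dgp X$.  Let $j_0\le k+1$ be the smallest integer with $h^{j_0}(\cH_k)\ne 0$, and set $\cG_0:=h^{j_0}(\cH_k)[-j_0]\in\fD^{\le j_0}$.  By Lemma~\ref{lem:compat}\eqref{it:lth} applied to $\cH_k\in\fD_{\le w-1}$, we have $h^{j_0}(\cH_k)\in\fD_{\le w-1}$, so $\cG_0\in\dsl X{w-1}$; the truncation $\cG_0=\tau^{\le j_0}\cH_k\to\cH_k$ is nonzero.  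I would push it forward along $\cH_k\to\tau^{\le k}\cF\to\cF$: applying $\Hom(\cG_0,-)$ to the baric triangle $\cH_k\to\tau^{\le k}\cF\to\beta_{\ge w}\tau^{\le k}\cF\to$ yields an isomorphism $\Hom(\cG_0,\cH_k)\cong\Hom(\cG_0,\tau^{\le k}\cF)$ (by baric Hom-vanishing against $\fD_{\ge w}$), and applying it to $\tau^{\le k}\cF\to\cF\to\tau^{\ge k+1}\cF\to$ yields an injection $\Hom(\cG_0,\tau^{\le k}\cF)\hookrightarrow\Hom(\cG_0,\cF)$ (using $\cG_0\in\fD^{\le j_0}\subset\fD^{\le k+1}$ and $\Hom(\fD^{\le k+1},\fD^{\ge k+2})=0$).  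The critical inequality $j_0\le k+1$, afforded precisely by the failure of $\cH_k\in\fD^{\ge k+2}$, is exactly what enables the second vanishing and pins down the correct cohomological degree for $\cG_0$.
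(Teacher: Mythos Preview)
Your proof is correct and follows essentially the same strategy as the paper: in each direction you use the same test objects ($\beta_{\ge w+1}\tau^{\ge k}\cF$ for part~(1), a $t$-truncation of $\beta_{\le w-1}\tau^{\le k}\cF$ for part~(2)) and the same two-step passage through the baric and $t$-triangles. The only cosmetic differences are that the paper handles part~(1) forward by the adjunction $\Hom(\cF,\cG)\cong\Hom(\tau^{\ge k}\cF,\cG)$ rather than the spectral sequence, and in part~(2) reverse it takes $\cG=\tau^{\le k+1}\cH_k$ rather than your single cohomology sheaf $h^{j_0}(\cH_k)[-j_0]$; both choices lie in $\fD^{\le k+1}\cap\fD_{\le w-1}$, so the concluding injectivity argument is identical.
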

In particular, we see from this lemma that
\begin{equation}\label{eqn:pm-bdd}
\begin{aligned}
\dsml Xw \cap \dgb X &= \dsl Xw, \\
\dspg Xw \cap \dgb X &= \dsg Xw.
\end{aligned}
\end{equation}
\begin{proof}
(1)~Suppose $\cF \in \dsml Xw$.  By Lemma~\ref{lem:compat}\eqref{it:lth},
$\tau^{\ge k}\cF \in \dsl Xw$ for all $k$.  In particular, given $\cG \in
\dsg X{w+1}$, let $k$ be such that $\cG \in \dgbg Xk$.  Then $\Hom(\cF,\cG)
\cong \Hom(\tau^{\ge k}\cF, \cG) = 0$.  Conversely, suppose $\cF \in \dgm
X$ but $\cF \notin \dsml Xw$, so that for some $k$, $h^k(\cF) \notin \dsl
Xw$.  Then $\tau^{\ge k}\cF \notin \dsl Xw$.  Let $\cG = \beta_{\ge
w+1}\tau^{\ge k}\cF$.  We then have a nonzero morphism $\tau^{\ge k}\cF \to
\cG$.  Moreover, since the baric structure on $\dgb X$ is compatible with
the standard $t$-structure, we have that $\cG \in \dgbg Xk$, so there is a
natural isomorphism $\Hom(\tau^{\ge k}\cF,\cG) \cong \Hom(\cF,\cG)$.  Thus,
$\Hom(\cF,\cG) \ne 0$.

(2)~Suppose $\cF \in \dgpg Xw$.  Given $\cG \in \dsl X{w-1}$, let $k$ be
such that $\cG \in \dgbl Xk$.  Then $\Hom(\cG,\cF) \cong \Hom(\cG,\tau^{\le
k}\cF) \cong \Hom(\cG,\beta_{\le w-1}\tau^{\le k}\cF) = 0$.  Conversely, if
$\cF \in \dgp X$ but $\cF \notin \dgpg Xw$, then for some $k$, $\beta_{\le
w-1}\tau^{\le k}\cF \notin \fD^{\ge k+2}$.  Let $\cG = \tau^{\le
k+1}\beta_{\le w-1}\tau^{\le k}\cF$.  Then clearly $\cG \in \dgbl X{k+1}$ and
$\cG \in \dsl X{w-1}$, and there is a nonzero morphism $\cG \to \beta_{\le
w-1}\tau^{\le k}\cF$.  In particular, the group $\Hom(\cG, \beta_{\le w-1}\tau^{\le k}\cF) \cong \Hom(\cG, \tau^{\le k}\cF)$ is nonzero.  Now, consider the exact sequence
\[
\Hom(\cG, (\tau^{\ge k+1}\cF)[-1]) \to \Hom(\cG, \tau^{\le k}\cF) \to \Hom(\cG, \cF).
\]
The first term vanishes because $(\tau^{\ge k+1}\cF)[-1] \in \dgpg X{k+2}$, so the natural map $\Hom(\cG, \tau^{\le k}\cF) \to \Hom(\cG, \cF)$ is injective.  It follows that $\Hom(\cG,\cF) \ne 0$.
\end{proof}

\subsection{HLR baric structures}
\label{sect:hlr}

We do not wish to work with arbitrary baric structures on $\dgb X$; rather,
we want them to be well-behaved in relation to the scheme structure on
$X$.  We have already imposed the condition that the baric structure be
compatible with the standard $t$-structure.  We may also ask that it
give rise to baric structures on subschemes, in the following sense.

\begin{defn}\label{defn:induced}
Suppose $X$ is equipped with a baric structure, and let $\kappa: Y \hto X$
be a locally closed subscheme.  A baric structure
on $Y$ is said to be \emph{induced} by the one on $X$ if $L\kappa^*$ is
right baryexact and $R\kappa^!$ is left baryexact.
\end{defn}

The class of ``HLR (hereditary, local, and rigid) baric structures,''
defined below, is particularly well-behaved.  For instance, every locally
closed subscheme of a scheme with an HLR baric structure admits a unique
induced baric structure.  (See Theorem~\ref{thm:hlr-induced}.)  The
remainder of Section~\ref{sect:baric-coh1} is devoted to establishing
various properties of HLR baric structures, and the main result of the
paper, Theorem~\ref{thm:main}, is a statement about a class of nontrivial
HLR baric structures.

\begin{defn}\label{defn:hlr}
A baric structure on $X$ is said to be
\emph{hereditary} if every closed subscheme admits an induced
baric structure.  A hereditary baric structure on $X$ is said to be
\emph{local} if every open subscheme admits an induced baric structure that
is also hereditary.

Next, a hereditary baric structure on $X$ is \emph{rigid} if for every
sequence of closed subschemes $Z \overset{t}{\hto} Z_1 \hto X$ where $Z_1$
is a nilpotent thickening of $Z$ ({\it i.e.}, $Z_1$ has the same underlying
topological space as $Z$), the induced baric structures on $Z$ and $Z_1$
are related as follows:
\begin{equation}\label{eqn:rigid}
\begin{aligned}
\dsl {Z_1}w &= \text{the thick closure of $t_*(\dsl Zw)$,} \\
\dsg {Z_1}w &= \text{the thick closure of $t_*(\dsg Zw)$.}
\end{aligned}
\end{equation}

Finally, a baric structure that is hereditary, local, and rigid is called
an \emph{HLR baric structure}.
\end{defn}

It turns out that the ``local'' and ``rigid'' conditions on an HLR baric structure are redundant:

\begin{thm}\label{thm:hlr}
Every hereditary baric structure is HLR.
\end{thm}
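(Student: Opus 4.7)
The plan is to establish rigidity first and then use it to establish locality. Throughout, hereditariness provides induced baric structures on all closed subschemes; for a nilpotent thickening $t\colon Z \hookrightarrow Z_1$, the adjunctions $Lt^* \dashv t_* \dashv Rt^!$ combined with right baryexactness of $Lt^*$ and left baryexactness of $Rt^!$ force $t_*$ to be both right and left baryexact. This immediately yields $t_*(\dsl{Z}{w}) \subset \dsl{Z_1}{w}$ and $t_*(\dsg{Z}{w}) \subset \dsg{Z_1}{w}$, giving the ``$\supseteq$'' direction of both lines of~\eqref{eqn:rigid}.

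For the reverse inclusion in the $\dsl{}{w}$ case, let $\cI$ be the ideal sheaf of $Z$ in $Z_1$ with $\cI^n=0$. Lemma~\ref{lem:compat}(\ref{it:lth}) reduces the problem to a coherent sheaf $\cF \in \dsl{Z_1}{w} \cap \cg{Z_1}$, which I would filter by the submodules $\cF \supset \cI\cF \supset \cdots \supset \cI^n\cF = 0$. Each successive subquotient is annihilated by $\cI$ and so equals $t_*\cG_k$ for some $\cG_k \in \cg{Z}$. Because $\dsl{Z_1}{w} \cap \fC$ is Serre by Lemma~\ref{lem:compat}(\ref{it:ltserre}), each subquotient remains in $\dsl{Z_1}{w}$; right baryexactness of $Lt^*$ together with the identity $h^0(Lt^* t_* \cG_k) \cong \cG_k$ then places $\cG_k$ in $\dsl{Z}{w}$, so $\cF$ lies in the thick closure of $t_*(\dsl{Z}{w})$.

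The $\dsg{}{w}$ direction of rigidity is the main obstacle, since $\dsg{Z_1}{w} \cap \fC$ is not closed under subobjects or quotients and the naive $\cI$-adic filtration fails. My first approach would be induction on the nilpotent order $n$: factor $t$ through the closed subscheme cut out by $\cI^{\lceil n/2 \rceil}$ to split $t$ as a composition of two thickenings of strictly smaller order, so that the inductive hypothesis applies to each factor (hereditariness guarantees that the intermediate induced baric structure exists and is unique). Alternatively, combining the orthogonality description in Lemma~\ref{lem:compat}(\ref{it:gthom}) with the already-established $\dsl{}{}$ rigidity gives the reformulation $\cF \in \dsg{Z_1}{w}$ if and only if $Rt^!\cF \in \dspg{Z}{w}$, from which one can hope to extract an appropriate filtration of $\cF$ by pushforwards from $Z$ directly.

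For locality, let $j\colon U \hookrightarrow X$ be an open immersion with complement $Z$. Using the standard extension result that every coherent sheaf on $U$ extends to one on $X$, I would set $\beta^U_{\le w}\cF := j^*\beta^X_{\le w}\tilde\cF$ for any coherent extension $\tilde\cF$, and similarly for $\beta^U_{\ge w}$. Rigidity is exactly what is needed to make this independent of the chosen extension: any two coherent extensions of $\cF$ admit a comparison whose cone is supported on some nilpotent thickening of $Z$, and by rigidity the $\beta^X$-truncation of any $Z$-supported object is itself a pushforward from a closed subscheme of $Z$, which vanishes after restriction to $U$. The baric axioms and the induced property for $j^*=Lj^*=Rj^!$ then follow formally, and hereditariness of the $U$-structure is obtained by iteration: a closed $W \subset U$ has Zariski closure $\bar W$ in $X$, which inherits a baric structure by hereditariness on $X$, and $W$ is open in $\bar W$, so the open-immersion construction applied to $\bar W$ equips $W$ with the required induced baric structure.
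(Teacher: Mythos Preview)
Your rigidity argument for $\dsg{Z_1}{w}$ has a genuine gap. The induction on nilpotent order does not get off the ground: when $\cI^2 = 0$, factoring through the subscheme cut out by $\cI^{\lceil 2/2 \rceil} = \cI$ recovers $Z$ itself and makes no progress. The $Rt^!$ characterization you derive is correct but does not by itself yield a filtration of $\cF$ by pushforwards from $Z$.

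The missing idea, and the paper's key trick, is to run the \emph{same} $\cI$-adic filtration argument as in the $\dsl{}{}$ case, but at the level of complexes rather than sheaves, and using baric truncation rather than the Serre property. Represent any $\cF \in \dgb{Z_1}$ by an actual bounded complex, filter it by the subcomplexes $\cI^k\cF$, and obtain in $\dgb{Z_1}$ a tower $0 = \cF_0 \to \cdots \to \cF_n = \cF$ whose successive cones have the form $t_*\cG_k$. If $\cF \in \dsg{Z_1}{w}$, apply $\beta_{\ge w}$: since baric truncation preserves distinguished triangles and $\beta_{\ge w}(t_*\cG_k) \cong t_*(\beta_{\ge w}\cG_k)$ by the proof of Lemma~\ref{lem:induced}, one obtains a filtration of $\beta_{\ge w}\cF = \cF$ with cones in $t_*(\dsg{Z}{w})$. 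The argument is perfectly symmetric in $\le$ and $\ge$; your reduction to a single sheaf via Lemma~\ref{lem:compat}\eqref{it:lth} works for $\dsl{}{}$ only because that side admits a cohomological description, which $\dsg{}{}$ does not.

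On locality: the paper in fact proves it \emph{before} rigidity, and the support-preservation statement you need (that $\beta_{\le w}$ and $\beta_{\ge w}$ preserve set-theoretic support) follows directly from hereditariness via the commutation $\beta_{\le w} i_* \cong i_* \beta_{\le w}$---this is Proposition~\ref{prop:settheorysupport}, and no rigidity is required. Your plan for locality is otherwise in the right spirit and close to the paper's approach, though the paper spells out in more detail that the candidate categories on $U$ are thick, that the baric axioms hold, and that the resulting structure on $U$ is itself hereditary.
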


This theorem will be proved in Section~\ref{sect:hlrproof}.  We first
require a couple of preliminary lemmas about induced baric structures,
proved below.  Following that, in Section~\ref{sect:hlrprop}, we will
establish a number of useful properties of HLR baric structures.

\begin{lem}\label{lem:induced}
Let $\schemebaric X$ be a baric structure on $X$, and let $i: Z \hto X$ be
a closed subscheme.  If $Z$
admits an induced baric structure, it is given by
\begin{equation}\label{eqn:hered}
\begin{aligned}
\dsl Zw &= \{ \cF \in \dgb Z \mid i_* \cF \in \dsl Xw \}, \\
\dsg Zw &= \{ \cF \in \dgb Z \mid i_* \cF \in \dsg Xw \}.
\end{aligned}
\end{equation}
Conversely, if the categories~\eqref{eqn:hered} constitute a baric
structure on $Z$, then that baric structure is induced from the one on $X$.

If an open subscheme $j: U \hto X$ admits an induced baric structure, it
is given by
\begin{equation}\label{eqn:local}
\begin{aligned}
\dsl Uw &= \{ \cF \in \dgb U \mid \text{$\cF \cong j^*\cF_1$ for some
$\cF_1 \in \dsl Xw$} \}, \\
\dsg Uw &= \{ \cF \in \dgb U \mid \text{$\cF \cong j^*\cF_1$ for some
$\cF_1 \in \dsg Xw$} \}.
\end{aligned}
\end{equation}
Conversely, if the categories~\eqref{eqn:local} constitute a baric
structure on $U$, then that baric structure is induced from the one on $X$.
\end{lem}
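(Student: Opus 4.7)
The plan is to route both halves of the lemma through a single general observation: for adjoint triangulated functors $F \dashv G$ between categories equipped with baric structures, $F$ is right baryexact if and only if $G$ is left baryexact. This is a direct Hom-vanishing computation via $\Hom(FA,B) = \Hom(A,GB)$, using the fact that $\fD_{\ge w+1}$ is the right orthogonal of $\fD_{\le w}$ (Proposition~\ref{prop:baric-basic}). A useful corollary, obtained by applying $F$ or $G$ to the canonical triangle $\beta_{\le w}X \to X \to \beta_{\ge w+1}X$ and invoking uniqueness of the baric decomposition, is that any functor which is both left and right baryexact commutes with the baric truncation functors.

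For the closed immersion $i \colon Z \hto X$, the adjunctions $i^* \dashv i_* \dashv i^!$ let us restate ``induced baric structure'' as the single condition that $i_*$ be both right and left baryexact; consequently $i_*$ commutes with the baric truncations. The inclusion $\dsl Zw \subset \{\cF \mid i_*\cF \in \dsl Xw\}$ is then immediate. Conversely, if $i_*\cF \in \dsl Xw$, then $i_*\beta_{\ge w+1}^Z\cF \cong \beta_{\ge w+1}^X i_*\cF = 0$, and since $i_*$ is fully faithful we obtain $\beta_{\ge w+1}^Z\cF = 0$, i.e., $\cF \in \dsl Zw$. The argument for $\dsg Zw$ is identical. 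For the converse direction of the statement, the formulas \eqref{eqn:hered} manifestly make $i_*$ baryexact in both directions, so the adjunction principle yields the required baryexactness of $i^*$ and $i^!$.

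For the open immersion $j \colon U \hto X$, the identities $Lj^* = Rj^! = j^*$ reduce ``induced'' to the requirement that $j^*$ be both right and left baryexact, and hence commute with baric truncation. The inclusion $\{j^*\cF_1 \mid \cF_1 \in \dsl Xw\} \subset \dsl Uw$ is immediate from right baryexactness. For the reverse, given $\cF \in \dsl Uw$, we invoke the standard fact that every bounded complex of $G$-equivariant coherent sheaves on $U$ extends to one on $X$ to obtain $\tilde\cF \in \dgb X$ with $j^*\tilde\cF \cong \cF$; then $\cF_1 := \beta_{\le w}^X\tilde\cF \in \dsl Xw$ satisfies $j^*\cF_1 \cong \beta_{\le w}^U j^*\tilde\cF = \beta_{\le w}^U\cF = \cF$. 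The case of $\dsg Uw$ and the converse assertion for the open case proceed in parallel. The only nontrivial technical input is the equivariant extension of coherent sheaves from $U$ to $X$, which is standard.
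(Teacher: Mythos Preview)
Your proof is correct and follows essentially the same route as the paper's, only repackaged: you abstract the key step as the general principle ``$F\dashv G$ with $F$ right baryexact iff $G$ left baryexact,'' whereas the paper runs the same adjunction computations inline. Two small points worth tightening. First, the orthogonality you need is not just the one on $\dgb X$ coming from Proposition~\ref{prop:baric-basic}: since the definition of ``induced'' refers to $Li^*$ on $\dgm X$ and $Ri^!$ on $\dgp X$, your adjunction argument actually rests on Lemma~\ref{lem:unbdd-orth}, which characterizes $\dsml Xw$ and $\dspg Xw$ by Hom-vanishing against \emph{bounded} objects. Second, ``the converse assertion for the open case proceeds in parallel'' is too quick: unlike the closed case, $j^*$ has no adjoint landing in coherent sheaves, so your adjunction principle does not apply; instead one uses that $j^*$ is $t$-exact and (by~\eqref{eqn:local}) baryexact on $\dgb X$, hence commutes with $h^k$, $\tau^{\le k}$, and $\beta_{\le w}$, and therefore preserves the defining conditions of $\dsml Xw$ and $\dspg Xw$. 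This is exactly what the paper does in its last paragraph.
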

\begin{proof}
Let $\schemebaric Z$ be an induced baric structure on a closed subscheme
$i: Z \hto X$.  If $\cF \in \dsl Zw$, then for all $\cG \in \dspg X{w+1}$,
we have (by Lemma~\ref{lem:unbdd-orth}) that $\Hom(\cF, Ri^!\cG) = 0$, and
therefore $\Hom(i_*\cF, \cG) = 0$.   The latter implies that $i_*\cF \in
\dsl Xw$.  Similarly, if $\cF \in \dsg Zw$, then $\Hom(Li^*\cG, \cF) =
\Hom(\cG, i_*\cF) = 0$ for all $\cG \in \dsml X{w-1}$, so $i_*\cF \in \dsg
Xw$.  For the opposite inclusion, given an object $\cF \in \dgb Z$, form
the distinguished triangle
\[
i_*\beta_{\le w}\cF \to i_*\cF \to i_*\beta_{\ge w+1}\cF \to
\]
in $\dgb X$.  By the reasoning above, we have $i_*\beta_{\le w}\cF \in \dsl Xw$
and $i_*\beta_{\ge w+1} \in \dsg X{w+1}$, so the first and last terms
above must be the baric truncations of $i_*\cF$:
\[
i_*\beta_{\le w}\cF \cong \beta_{\le w}i_*\cF
\qquad\text{and}\qquad
i_*\beta_{\ge w+1}\cF \cong \beta_{\ge w+1}i_*\cF.
\]
Thus, if $i_*\cF \in \dsl Xw$, then $\beta_{\ge w+1}i_*\cF = i_*\beta_{\ge
w+1}\cF = 0$.  Since $i_*$ is faithful, this implies that $\beta_{\ge
w+1}\cF = 0$, so that $\cF \in \dsl Zw$.  The same argument shows that
$i_*\cF \in \dsg Xw$ implies that $\cF \in \dsl Xw$.

Next, assume the categories~\eqref{eqn:hered} constitute a baric
structure on $Z$.  We will show that this baric structure is induced from
the one on $X$.  If $\cF \in \dsml Xw$, then $\Hom(\cF, i_*\cG) = 0$ for
all $\cG \in \dsg Z{w+1}$ by Lemma~\ref{lem:unbdd-orth}, so
$\Hom(Li^*\cF,\cG) = 0$, and hence $Li^*\cF \in \dsml Zw$.  Similarly, if
$\cF \in \dspg Xw$, then $\Hom(i_*\cG,\cF) = \Hom(\cG, Ri^!\cF) = 0$ for
all $\cG \in \dsl Z{w-1}$, so $Ri^!\cF \in \dspg Zw$.  Thus, $Li^*$ is
right baryexact, and $Ri^!$ is left baryexact, as desired. 

We turn now to open subschemes.  Suppose $\schemebaric U$ is an induced
baric structure on an open subscheme $j: U \hto X$.  In view of the
equalities~\eqref{eqn:pm-bdd}, the definition of ``induced'' implies that
$j^*: \dgb X \to \dgb U$ is baryexact.  In other words, if
$\cF_1 \in \dsl Xw$, then $j^*\cF_1 \in \dsl Uw$, and if $\cF_1 \in \dsg
Xw$, then $j^*\cF_1 \in \dsg Uw$.  Conversely, if $\cF \in \dsl Uw$, then
there exists some object $\cF' \in \dgb X$ such that $j^*\cF' \cong \cF$. 
Form the distinguished triangle $\beta_{\le w}\cF' \to \cF' \to \beta_{\ge
w+1}\cF' \to$, and apply $j^*$ to it.  We know that $j^*\beta_{\le w}\cF'
\in \dsl Uw$ and that $j^*\beta_{\ge w+1}\cF' \in \dsg U{w+1}$.  Since
$j^*\cF' \cong \cF$, we see from the triangle
\[
j^*\beta_{\le w}\cF' \to \cF \to j^*\beta_{\ge w+1}\cF' \to
\]
that $j^*\beta_{\ge w+1}\cF' \cong \beta_{\ge w+1}\cF = 0$, and hence that
$\cF \cong j^*\beta_{\le w}\cF'$.  Thus, setting $\cF_1 = \beta_{\le
w}\cF'$, we have found an $\cF_1 \in \dsl Xw$ such that $j^*\cF_1 \cong
\cF$.  The argument for $\dsg Uw$ is similar.

Finally, assume the categories~\eqref{eqn:local} constitute a baric
structure on $U$.  We must show that this baric structure is induced. 
Clearly, $j^*$ is baryexact as a functor of bounded derived categories
$\dgb X \to \dgb U$.  Since $j^*$ is also exact, it commutes with
truncation and cohomology functors, and it takes $\dgbg Xw$ to $\dgbg
Uw$.  It follows from these observations that it takes $\dsml Xw$ to $\dsml
Uw$ and $\dspg Xw$ to $\dspg Uw$.
\end{proof}

\begin{lem}\label{lem:hlr-baric-res}
Let $j: U \hto X$ be the inclusion of an open subscheme, and let $i: Z \hto
X$ be the inclusion of a closed subscheme.  Assume that $U$ and $Z$ are equipped with baric structures induced from one on $X$.  Then:
\begin{enumerate}
\item $j^*$ takes $\dsml Xw$ to $\dsml Uw$ and $\dspg Xw$ to
$\dspg Uw$.
\item $Li^*$ takes $\dsml Xw$ to $\dsml Zw$.
\item $Ri^!$ takes $\dspg Xw$ to $\dspg Zw$.
\item $i_*$ takes $\dsml Zw$ to $\dsml Xw$ and $\dspg Zw$ to
$\dspg Xw$.
\end{enumerate}
\end{lem}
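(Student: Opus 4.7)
The plan is to reduce all four statements to Hom--vanishing via the orthogonality characterizations of Lemma~\ref{lem:unbdd-orth}, and then exploit the adjunctions $(Li^*, i_*)$, $(i_*, Ri^!)$, and the $t$-exactness of $j^*$, together with what Lemma~\ref{lem:induced} already tells us about the shape of an induced baric structure. Parts (2) and (3) are essentially built into Definition~\ref{defn:induced}: by hypothesis the baric structure on $Z$ is induced, so $Li^*$ is right baryexact and $Ri^!$ is left baryexact in the unbounded sense, which is exactly the content of (2) and (3). (Concretely, for (2), given $\cF \in \dsml Xw$ and $\cG \in \dsg Z{w+1}$, the adjunction and formula~\eqref{eqn:hered} give $\Hom(Li^*\cF,\cG) \cong \Hom(\cF, i_*\cG) = 0$ since $i_*\cG \in \dsg X{w+1}$; part (3) is symmetric.)

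For part (4), the plan is to dualize by adjunction. Given $\cF \in \dsml Zw$ and any $\cG \in \dsg X{w+1}$, Lemma~\ref{lem:unbdd-orth} reduces the claim $i_*\cF \in \dsml Xw$ to showing $\Hom(i_*\cF,\cG) = 0$; but this equals $\Hom(\cF, Ri^!\cG)$, and by~\eqref{eqn:pm-bdd} together with the left baryexactness of $Ri^!$ we have $Ri^!\cG \in \dspg Z{w+1}$, so the Hom vanishes by the other half of Lemma~\ref{lem:unbdd-orth}. For the $\dspg$ half of (4), the argument is parallel: given $\cF \in \dspg Zw$ and $\cG \in \dsl X{w-1}$, use $\Hom(\cG, i_*\cF) \cong \Hom(Li^*\cG, \cF)$ together with $Li^*\cG \in \dsml Z{w-1}$.

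For part (1), the plan is to exploit that $j^*$ is exact, hence commutes with $h^k$ and with $\tau^{\le k}, \tau^{\ge k}$, while Lemma~\ref{lem:induced} identifies the induced baric structure on $U$ via~\eqref{eqn:local} and shows that $j^*$ is baryexact on the bounded categories. The statement for $\dsml$ is then immediate, since $h^k(j^*\cF) = j^* h^k(\cF)$ and by assumption $h^k(\cF) \in \dsl Xw$. For the $\dspg$ half, the key additional input is the natural isomorphism $\beta_{\le w-1}^U \circ j^* \cong j^* \circ \beta_{\le w-1}^X$ on $\dgb X$; this follows by applying $j^*$ to the baric truncation triangle on $X$ and invoking uniqueness of baric truncation, since bounded baryexactness of $j^*$ puts the two outer terms in $\dsl U{w-1}$ and $\dsg Uw$ respectively. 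Combined with $t$-exactness of $j^*$ (which preserves $\dgbg{}{k+2}$), this converts the defining condition on $\cF \in \dspg Xw$ into the defining condition on $j^*\cF$.

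The only mildly subtle point is the commutation $j^*\beta_{\le w-1}^X \cong \beta_{\le w-1}^U j^*$ used in part (1); but this is a formal consequence of baryexactness of $j^*$ on bounded objects plus the uniqueness half of Proposition~\ref{prop:baric-basic}, so no serious obstacle arises. The rest is bookkeeping with adjunctions and Lemma~\ref{lem:unbdd-orth}.
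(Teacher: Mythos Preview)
Your argument is essentially correct, and for parts (2) and (3) it coincides with the paper's one-line observation that these hold by definition of ``induced.'' Note, however, that part~(1) \emph{also} holds by definition: for an open immersion one has $Lj^* = j^* = Rj^!$, so both the right-baryexactness and the left-baryexactness of $j^*$ are already contained in Definition~\ref{defn:induced}. Your longer argument for part~(1) via commutation of $j^*$ with $\beta_{\le w-1}$ is correct (and indeed appears in the proof of Lemma~\ref{lem:induced}), but it is not needed here.

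For part~(4) your route genuinely differs from the paper's. The paper argues directly from the \emph{definitions} of $\dsml{}{w}$ and $\dspg{}{w}$: since $i_*$ is $t$-exact and (by the proof of Lemma~\ref{lem:induced}) baryexact on bounded objects, it commutes with both $h^k$ and with $\beta_{\le w-1}\tau^{\le k}$, and the defining conditions transfer immediately. Your approach via adjunction and Lemma~\ref{lem:unbdd-orth} is valid, but there is a small gap as written: Lemma~\ref{lem:unbdd-orth} characterizes $\dsml{}{w}$ and $\dspg{}{w}$ only via Homs against \emph{bounded} test objects. Thus when you write ``the Hom vanishes by the other half of Lemma~\ref{lem:unbdd-orth},'' you are invoking $\Hom(\cF, Ri^!\cG) = 0$ with $\cF \in \dsml Zw$ merely bounded above and $Ri^!\cG \in \dspg Z{w+1}$ merely bounded below; Lemma~\ref{lem:unbdd-orth}(2) only guarantees vanishing against $\cH \in \dsl Zw$. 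The fix is a routine truncation (since $Ri^!\cG \in \dgpg Zn$ for some $n$, replace $\cF$ by $\tau^{\ge n}\cF \in \dsl Zw$), and similarly for the $\dspg$ half. The paper's direct approach avoids this bookkeeping, which is the main practical advantage of working with the explicit definitions rather than the orthogonality characterization.
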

\begin{proof}
Parts~(1), (2), and~(3) hold by definition.

(4)~We saw in the proof of Lemma~\ref{lem:induced}
that as a functor of bounded derived categories $\dgb Z \to \dgb X$, $i_*$
is baryexact.  Since $i_*$ is also an exact functor, we have $h^k(i_*\cF)
\cong i_*h^k(\cF)$ for any $\cF \in \dgm Z$.  Thus, if $\cF \in \dsml
Zw$, we have $h^k(i_*\cF) \in \dsl Xw$ for all $k$; in other words,
$i_*\cF \in \dsml Xw$.  On the other hand, suppose $\cF \in \dspg Zw$. 
Since $i_*$ is exact and baryexact on $\dgb Z$, we have $i_*\beta_{\le
w-1}\tau^{\le k}\cF \cong \beta_{\le w-1}\tau^{\le k}i_*\cF$.  Moreover,
the fact that $\beta_{\le w-1}\tau^{\le k}\cF \in \dgbg Z{k+1}$ for all
$k$ implies that $i_*\beta_{\le w-1}\tau^{\le k}\cF \in \dgbg X{k+1}$ for
all $k$.  Thus, $i_*\cF \in \dspg Xw$.
\end{proof}

\begin{lem}\label{lem:closed-hered}
Let $\schemebaric X$ be a hereditary baric structure on $X$, and let $i: Z
\hto X$ be the inclusion of a closed subscheme.  The induced baric
structure on $Z$ is also hereditary.
\end{lem}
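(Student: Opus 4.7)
The plan is to exploit the fact that a composition of closed immersions is a closed immersion, and then to apply both directions of Lemma~\ref{lem:induced}. Let $i': Z' \hto Z$ be an arbitrary closed subscheme of $Z$. Then $i \circ i': Z' \hto X$ is a closed immersion, so by the hereditary hypothesis on $X$, there is an induced baric structure on $Z'$. I want to show that this same baric structure is also induced from the one on $Z$.

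By the forward direction of Lemma~\ref{lem:induced}, the baric structure induced on $Z'$ from $X$ is characterized by
\[
\dsl{Z'}w = \{\cF \in \dgb{Z'} \mid (i \circ i')_* \cF \in \dsl Xw\},\qquad
\dsg{Z'}w = \{\cF \in \dgb{Z'} \mid (i \circ i')_* \cF \in \dsg Xw\}.
\]
Applying Lemma~\ref{lem:induced} again, this time to the pair $(X,Z)$ and the already-induced baric structure on $Z$, we have $i_*\cG \in \dsl Xw$ iff $\cG \in \dsl Zw$, and similarly for $\dsg{}{}$. Since $(i \circ i')_* = i_* \circ i'_*$, the two displayed sets coincide with
\[
\{\cF \in \dgb{Z'} \mid i'_* \cF \in \dsl Zw\}
\qquad\text{and}\qquad
\{\cF \in \dgb{Z'} \mid i'_* \cF \in \dsg Zw\}
\]
respectively.

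Now I invoke the converse direction of Lemma~\ref{lem:induced}: once I know that the collections on the right constitute a baric structure on $Z'$ (which we have, since they equal the induced one from $X$), the lemma guarantees that this baric structure is induced from the one on $Z$. Thus every closed subscheme of $Z$ admits a baric structure induced from $Z$, i.e., the baric structure on $Z$ is hereditary.

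I do not expect any serious obstacle here; the proof is a straightforward composition argument. The only thing one needs to be careful about is that $i_*$ is exact and fully faithful on coherent sheaves (used implicitly when invoking the characterization in Lemma~\ref{lem:induced}), so both applications of that lemma are legitimate.
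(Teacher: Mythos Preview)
Your proof is correct and takes a cleaner route than the paper's. You use both directions of Lemma~\ref{lem:induced}: the forward direction (applied twice, to $(X,Z')$ and $(X,Z)$) rewrites the induced baric structure on $Z'$ as $\{\cF \mid i'_*\cF \in \dsl Zw\}$, and then the converse direction (applied to $(Z,Z')$) immediately yields that this baric structure is induced from $Z$. The paper instead verifies the definition of ``induced'' directly: for $\kappa: Y \hto Z$, it shows $L\kappa^*$ is right baryexact and $R\kappa^!$ is left baryexact by a Hom-vanishing argument, using adjunction, the faithfulness of $i_*$, and Lemma~\ref{lem:hlr-baric-res} to reduce to a vanishing statement in $\dgb X$. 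Your approach hides the unbounded categories $\dsml{\cdot}{w}$ and $\dspg{\cdot}{w}$ inside the proof of Lemma~\ref{lem:induced}; the paper's approach has the minor advantage of not relying on the converse clause of that lemma, but at the cost of reproving part of its content.
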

\begin{proof}
Let $\kappa: Y \hto Z$ be a closed subscheme of $Z$.  We must show that $Y$
admits a baric structure
induced from the one on $Z$.  In fact, we claim that the baric structure
on $Y$ induced from the on $X$ (via $i \circ \kappa: Y \hto X$) has the
desired property. Suppose $\cF \in \dsml Zw$.  If $L\kappa^*\cF \notin
\dsml Yw$, then there is some $\cG \in \dsg Y{w+1}$ such that
$\Hom(L\kappa^*\cF,\cG)
\ne 0$. Then $\Hom(\cF, \kappa_*\cG) \ne 0$ and, because $i_*$ is
faithful,
$\Hom(i_*\cF, i_*\kappa_*\cG) \ne 0$.  But this is impossible,
because according to Lemma~\ref{lem:hlr-baric-res}, $i_*\cF \in \dsml
Xw$ and $(i\circ \kappa)_*\cG \in \dsg X{w+1}$.  Thus, $L\kappa^*\cF \in
\dsml Yw$.  Similarly, if $\cF \in \dspg Zw$, a consideration of $\Hom(\cG,
R\kappa^!\cF)$ and $\Hom(i_*\kappa_*\cG, i_*\cF)$ for $\cG \in \dsl
Y{w-1}$ shows that $R\kappa^!\cF \in \dspg Yw$.  Thus, $L\kappa^*$ is right
baryexact and $R\kappa^!$ is left baryexact, so the baric structure on $Y$
induced from the one on $X$ is also induced from the one on $Z$.  The
induced baric structure on $Z$ is therefore hereditary.
\end{proof}

\subsection{Properties of HLR baric structures}
\label{sect:hlrprop}

In this section, we prove three useful results about HLR baric structures. 
First, we prove that the HLR property is inherited by induced baric
structures on subschemes.  Next, we prove an additional rigidity property
for nilpotent thickenings of closed subschemes.  Finally,
we prove a ``gluing theorem'' that states that an HLR baric structure is
determined by the baric structures it induces on a closed subscheme and the
complementary open subscheme.  It should be noted that the proofs of these
results depend on Theorem~\ref{thm:hlr}.

\begin{thm}\label{thm:hlr-induced}
Suppose $X$ is endowed with an HLR baric structure.  Every locally closed
subscheme $\kappa: Y \hto X$
admits a unique induced baric structure.  Moreover, this baric
structure is also HLR.
\end{thm}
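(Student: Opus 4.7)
Any locally closed immersion $\kappa: Y \hto X$ factors as $\kappa = i \circ j$, where $j: Y \hto \bar Y$ is the open immersion into the scheme-theoretic closure of $Y$ in $X$ and $i: \bar Y \hto X$ is the resulting closed immersion. The strategy is to construct the induced baric structure on $Y$ in two stages along this factorization, derive the HLR property from Theorem~\ref{thm:hlr} applied twice, and then argue uniqueness by reducing to the uniqueness statements for closed and open immersions already contained in Lemma~\ref{lem:induced}.

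\textbf{Existence and HLR.} By the hereditary hypothesis on $X$, there is an induced baric structure on $\bar Y$. Lemma~\ref{lem:closed-hered} shows this induced structure is again hereditary, so Theorem~\ref{thm:hlr} upgrades it to HLR, and in particular to local. The open subscheme $Y \subset \bar Y$ therefore acquires a baric structure induced from $\bar Y$ that is itself hereditary, by the definition of ``local'' in Definition~\ref{defn:hlr}, and hence HLR by a second application of Theorem~\ref{thm:hlr}. To verify that the resulting baric structure on $Y$ is induced from $X$ in the sense of Definition~\ref{defn:induced}, factor
\[
L\kappa^* \cong j^* \circ Li^* \quad\text{and}\quad R\kappa^! \cong j^* \circ Ri^!,
\]
where the second identity uses $Rj^! \cong j^*$ for the open immersion $j$, and apply Lemma~\ref{lem:hlr-baric-res} to each factor in turn to obtain the required right, respectively left, baryexactness of the composition.

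\textbf{Uniqueness, and the main obstacle.} Let $(\cD_{\le w}^Y, \cD_{\ge w}^Y)$ be any induced baric structure on $Y$ from $X$. The plan is to show that it is in fact induced from $\bar Y$ via $j$; then Lemma~\ref{lem:induced} identifies it with the specific structure constructed above. The left-baryexact half of this reduction is straightforward via the identity $Ri^! i_* \cong \mathrm{id}$ for closed immersions: for any $\cF \in \dspg {\bar Y} w$, the object $i_*\cF$ lies in $\dspg X w$ by Lemma~\ref{lem:hlr-baric-res}, and then $j^*\cF \cong j^* Ri^! i_*\cF = R\kappa^!(i_*\cF) \in \dspg Y w$ by the hypothesis on $(\cD_{\le w}^Y, \cD_{\ge w}^Y)$. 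The right-baryexact half is the principal technical obstacle, since $Li^* i_*$ differs from the identity on $\dgm {\bar Y}$ by higher Tor terms: for $\cG \in \dsml {\bar Y} w$ one has $L\kappa^*(i_*\cG) = j^* Li^* i_* \cG \in \dsml Y w$, but to transfer this conclusion to $j^* \cG$ itself one must compare via the distinguished triangle connecting $Li^* i_* \cG$ to $\cG$, and combine this with the Hom-vanishing criterion of Lemma~\ref{lem:unbdd-orth}, feeding back the left-baryexact half already established. The asymmetry between $L\kappa^*$ and $R\kappa^!$ caused by this Tor discrepancy is where the real work of the uniqueness argument lies.
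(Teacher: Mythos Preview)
Your factorization is the reverse of the paper's: you write $\kappa = i \circ j$ with $j: Y \hto \bar Y$ open and $i: \bar Y \hto X$ closed, whereas the paper factors $\kappa = j \circ i$ with $i: Y \hto U$ closed and $j: U \hto X$ open. Both factorizations are legitimate, and your existence and HLR arguments are correct and parallel to the paper's.

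The uniqueness argument, however, has a genuine gap. You correctly establish the left-baryexact half of ``induced from $\bar Y$'' via $Ri^! i_* \cong \mathrm{id}$. For the right-baryexact half you propose to handle the Tor discrepancy in $Li^* i_*$ by a distinguished-triangle argument together with Lemma~\ref{lem:unbdd-orth}, ``feeding back'' the left-baryexact half. This sketch does not work as stated: the cone of $Li^* i_*\cG \to \cG$ again lies in $\dsml{\bar Y}{w}$, so the triangle gives a circular reduction; and there is no adjoint available to convert $\Hom_Y(j^*\cG,\cH)$ into a $\Hom$ on $\bar Y$ or $X$, so the Hom-vanishing criterion and the left-baryexact half do not combine in any evident way.

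The fix is simpler than your sketch suggests and does not use the left-baryexact half at all. Since $\dsml{Y}{w}$ is defined cohomologically, it suffices to treat a single sheaf $\cG \in \dsl{\bar Y}{w} \cap \cg{\bar Y}$. Then $i_*\cG \in \dsl{X}{w}$ by Lemma~\ref{lem:hlr-baric-res}, so $L\kappa^*(i_*\cG) = j^* Li^* i_*\cG \in \dsml{Y}{w}$ by the hypothesis that the given baric structure on $Y$ is induced from $X$. But for a closed immersion $i$ one has $h^0(Li^* i_*\cG) \cong i^* i_*\cG \cong \cG$, hence $j^*\cG \cong h^0\bigl(j^* Li^* i_*\cG\bigr) \in \dsl{Y}{w}$. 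This is the missing step.

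By contrast, the paper's factorization forces a more laborious uniqueness argument: one must show that $Li^*$ and $Ri^!$ (for $i: Y \hto U$ closed) are appropriately baryexact, and since objects of $\dsml{U}{w}$ and $\dspg{U}{w}$ are not \emph{a priori} restrictions of objects on $X$, the paper resorts to truncation tricks and the explicit description~\eqref{eqn:local} of the induced baric structure on $U$ to manufacture extensions to $X$. Your factorization, once the $h^0$ observation is made, yields a shorter uniqueness proof.
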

\begin{proof}
We have already seen the uniqueness of the induced baric structure in the
case of open or closed subschemes, in Lemma~\ref{lem:induced}.  For a
general locally closed subscheme, let us factor the inclusion map $\kappa:
Y \to X$ as a closed imbedding $i: Y \hto U$ followed by an open imbedding
$j: U \hto X$.  Then $U$ acquires a unique induced hereditary baric
structure from the baric structure on $X$, and it in turn induces a unique
baric structure on its closed subscheme $Y$.  This baric structure is also
induced from the one on $X$: clearly, $L\kappa^* = Li^* \circ j^*$ is
right baryexact, and $R\kappa^! = Ri^! \circ j^*$ is left baryexact.

To show that this is the unique baric structure on $Y$ induced from the
one on $X$, we must show that the baryexactness assumptions on $L\kappa^*$
and $R\kappa^!$ imply the same conditions on $Li^*$ and $Ri^!$.  (It then
follows that any baric structure induced from the one on $X$ is actually
induced from the one on $U$.)  Suppose $\cF \in \dsml Uw$, and consider
a distinguished triangle of the form
\[
Li^*\tau^{\le k-1}\cF \to Li^*\cF \to Li^*\tau^{\ge k}\cF \to.
\]
Since $Li^*\tau^{\le k-1}\cF \in \dgml Y{k-1}$, we see that $h^k(Li^*\cF)
\cong h^k(Li^*\tau^{\ge k}\cF)$.  Now, $\tau^{\ge k}\cF$ is an object in
$\dsl Uw$, so there exists an object $\cF_1 \in \dsl Xw$ such that
$j^*\cF_1 \cong \tau^{\ge k}\cF$.  By assumption, $L\kappa^*\cF_1 \in
\dsml Yw$.  But $L\kappa^*\cF \cong Li^*\tau^{\ge k}\cF$, so we conclude
that $h^k(Li^*\tau^{\ge k}\cF) \cong h^k(Li^*\cF) \in \dsl Yw$.  Thus,
$Li^*\cF \in \dsml Yw$.  

On the other hand, suppose that $\cF \in \dspg Uw$, and consider a
distinguished triangle of the form
\[
Ri^!\tau^{\le k}\cF \to Ri^!\cF \to Ri^!\tau^{\ge k+1}\cF \to.
\]
Since $Ri^!\tau^{\ge k+1}\cF \in \dgpg Y{k+1}$, we see that $\tau^{\le
k}Ri^!\cF \cong \tau^{\le k}Ri^!\tau^{\le k}\cF$.  Next, consider the
distinguished triangle
\[
Ri^!\beta_{\le w-1}\tau^{\le k}\cF \to Ri^!\tau^{\le k}\cF \to Ri^!
\beta_{\ge w}\tau^{\le k}\cF \to.
\]
By assumption, $\beta_{\le w-1}\tau^{\le k}\cF \in \dgbg U{k+2}$, so $Ri^!\beta_{\le w-1}\tau^{\le k}\cF \in \dgpg Y{k+2}$.  It
follows that $\tau^{\le k}Ri^!\tau^{\le k}\cF \cong
\tau^{\le k}Ri^!\beta_{\ge w}\tau^{\le k}\cF$.
Now, $\beta_{\ge w}\tau^{\le k}\cF \in \dsg Uw$, so there is some $\cF_1
\in \dsg Xw$ such that $j^*\cF_1 \cong \beta_{\ge w}\tau^{\le k}\cF$. 
Since $R\kappa^!\cF_1$ belongs to $\dspg Yw$ by assumption, we have
$\beta_{\le w-1}\tau^{\le k}R\kappa^!\cF_1 \in \dgbg Y{k+2}$.  But we also
have $R\kappa^!\cF_1 \cong Ri^!\beta_{\ge w}\tau^{\le k}\cF$, and from the
chain of isomorphisms
\[
\tau^{\le k}Ri^!\cF \cong \tau^{\le k}Ri^!\tau^{\le k}\cF \cong
\tau^{\le k}Ri^!\beta_{\ge w}\tau^{\le k}\cF \cong
\tau^{\le k}R\kappa^!\cF_1,
\]
we see that $\beta_{\le w-1}\tau^{\le k}Ri^!\cF \in \dgbg Y{k+2}$.  Thus,
$Ri^!\cF \in \dspg Yw$.  We now conclude that any baric structure on $Y$
induced from the one on $X$ is also induced from the one on $U$, and is
therefore uniquely determined.

To show that the induced baric structure on a locally closed subscheme is
HLR, it suffices, by Theorem~\ref{thm:hlr}, to show that it is hereditary.
In the case of a closed subscheme, this was done in
Lemma~\ref{lem:closed-hered}, and in the case of an open subscheme, there
is nothing to prove: this property is part of the definition of ``local.'' 
The assertion then follows for a general locally closed subscheme, since,
by construction, the induced baric structure on such a subscheme is
obtained by first passing to an open subscheme, and then to a closed
subscheme of that.
\end{proof}

Next, we turn to nilpotent thickenings of a closed subscheme.

\begin{prop}\label{prop:rigid}
Suppose $X$ is endowed with an HLR baric structure, and let $Z
\overset{t}{\hto} Z_1 \hto X$ be a sequence of closed subschemes of $X$
with the same underlying topological space.  Then:
\begin{enumerate}
\item For $\cF \in \dgm {Z_1}$, $\cF \in \dsml {Z_1}w$ if and only if
$Lt^*\cF \in \dsml Zw$.
\item For $\cF \in \dgp {Z_1}$, $\cF \in \dspg {Z_1}w$ if and only if
$Rt^!\cF \in \dspg Zw$.
\end{enumerate}
\end{prop}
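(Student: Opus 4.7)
The plan is to deduce the ``only if'' direction of each part from the baryexactness results already established, and to treat the ``if'' direction by combining the Hom-orthogonality characterization of Lemma~\ref{lem:unbdd-orth} with the rigidity formula~\eqref{eqn:rigid}.

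For the ``only if'' implications, Theorem~\ref{thm:hlr-induced} tells us that the induced baric structures on $Z_1$ and on $Z$ are themselves HLR, and that the baric structure on $Z$ is induced from the one on $Z_1$. Applying Lemma~\ref{lem:hlr-baric-res} to the closed immersion $t\colon Z \hto Z_1$ therefore yields $Lt^*(\dsml{Z_1}w) \subset \dsml Zw$ and $Rt^!(\dspg{Z_1}w) \subset \dspg Zw$, which is exactly ``only if'' in both parts.

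For the ``if'' direction of~(1), assume $Lt^*\cF \in \dsml Zw$. By Lemma~\ref{lem:unbdd-orth}(1) it suffices to check that $\Hom(\cF,\cG) = 0$ for every $\cG \in \dsg{Z_1}{w+1}$. The rigidity axiom~\eqref{eqn:rigid} identifies $\dsg{Z_1}{w+1}$ with the thick closure of $t_*(\dsg Z{w+1})$ inside $\dgb{Z_1}$. Since $\Hom(\cF,-)$ is a cohomological functor to abelian groups, the class of objects on which it vanishes is stable under shifts, extensions, and direct summands, hence is itself a thick subcategory; it therefore suffices to verify the vanishing on the generating objects $\cG = t_*\cG'$ with $\cG' \in \dsg Z{w+1}$. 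Adjunction then gives $\Hom(\cF, t_*\cG') \cong \Hom(Lt^*\cF, \cG')$, which vanishes by Lemma~\ref{lem:unbdd-orth}(1) applied to $Z$. The ``if'' direction of~(2) is completely symmetric: one reduces via Lemma~\ref{lem:unbdd-orth}(2) to showing $\Hom(\cG,\cF) = 0$ for $\cG \in \dsl{Z_1}{w-1}$, invokes rigidity to restrict to $\cG = t_*\cG'$ with $\cG' \in \dsl Z{w-1}$, and applies the $(t_*, Rt^!)$-adjunction $\Hom(t_*\cG',\cF) \cong \Hom(\cG', Rt^!\cF)$ together with $Rt^!\cF \in \dspg Zw$.

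The only step that requires any thought is the thick-closure reduction, and it presents no real obstacle: Hom vanishing is automatically preserved under the very operations (shifts, extensions, summands) that generate the thick closure, so the reduction is immediate. Everything else is assembly of previously established facts---HLR induction on locally closed subschemes, the rigidity axiom, and the orthogonality descriptions of $\dsml{\cdot}{w}$ and $\dspg{\cdot}{w}$.
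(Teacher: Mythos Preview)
Your proof is correct and follows essentially the same route as the paper's: the ``only if'' direction uses that the baric structure on $Z$ is induced from the one on $Z_1$ (the paper states this directly; you justify it via Theorem~\ref{thm:hlr-induced}, which is fine though Lemma~\ref{lem:closed-hered} already suffices), and the ``if'' direction combines rigidity~\eqref{eqn:rigid}, the $(Lt^*,t_*)$ and $(t_*,Rt^!)$ adjunctions, and the orthogonality criterion of Lemma~\ref{lem:unbdd-orth}. Your explicit remark that Hom-vanishing passes to the thick closure is a welcome clarification the paper leaves implicit.
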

\begin{proof}
If $\cF \in \dsml {Z_1}w$, it is obvious that $Lt^*\cF \in \dsml Zw$,
since the baric structure on $Z$ is induced from that on $Z_1$. 
Conversely, suppose $\cF \in \dgm {Z_1}$ and $Lt^*\cF \in \dsml Zw$.  Then
$\Hom(\cF, t_*\cG) \cong \Hom(Lt^*\cF,\cG) = 0$ for all $\cG \in \dsg
Z{w+1}$.  But by the definition of ``rigid,'' $\dsg {Z_1}{w+1}$ is
generated by objects of the form $t_*\cG$ with $\cG \in \dsg Z{w+1}$, so it
follows that $\Hom(\cF,\cG') = 0$ for all $\cG' \in \dsg {Z_1}{w+1}$, and
hence that $\cF \in \dsml {Z_1}w$.  The proof of part~(2) is entirely
analogous and will be omitted.
\end{proof}

Finally, we prove a ``gluing theorem'' for HLR baric structures.

\begin{thm}
Suppose $X$ is endowed with an HLR baric structure.  Let $i:Z \hookrightarrow X$ be a closed subscheme of $X$,
and let $j:U \hookrightarrow X$ be its open complement.  Endow $U$ and $Z$ with the baric structures induced from that on $X$.  Then we have
\begin{align*}
\dsl Xw &= \{ \cF \in \dgb X \mid \text{$j^*\cF \in \dsl Uw$ and $Li^*\cF
\in \dsml Zw$} \}, \\
\dsg Xw &= \{ \cF \in \dgb X \mid \text{$j^*\cF \in \dsg Uw$ and $Ri^!\cF
\in \dspg Zw$} \}.
\end{align*}
In particular, there is a unique HLR baric structure on $X$ which induces
the baric structures $\schemebaric U$ and $\schemebaric Z$ on $U$ and $Z$.
\end{thm}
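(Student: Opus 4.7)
The plan is to establish the two displayed formulas; uniqueness will then follow at once, since each characterizes $\dsl Xw$ and $\dsg Xw$ purely in terms of the induced baric structures on $U$ and $Z$ together with the functors $j^*$, $Li^*$, and $Ri^!$. The forward containments should be immediate from Lemma~\ref{lem:hlr-baric-res}: parts~(1) and~(2), combined with~\eqref{eqn:pm-bdd}, will handle the ``$\subset$'' direction of the first formula, and parts~(1) and~(3) will handle that of the second.

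For the reverse containment in the first formula, suppose $\cF \in \dgb X$ satisfies $j^*\cF \in \dsl Uw$ and $Li^*\cF \in \dsml Zw$; setting $\cG := \beta_{\ge w+1}\cF$, the task is to prove $\cG = 0$. The plan runs as follows. Baryexactness of $j^*$ gives $j^*\cG \cong \beta_{\ge w+1}j^*\cF = 0$, so every cohomology sheaf of $\cG$ is set-theoretically supported on $Z$; a standard induction on cohomological length then produces a nilpotent thickening $i_n : Z_n \hto X$ of $Z$ and an object $\cG_0 \in \dgb{Z_n}$ with $i_{n,*}\cG_0 \cong \cG$. By Theorem~\ref{thm:hlr-induced}, $Z_n$ inherits an HLR baric structure, and Lemma~\ref{lem:induced} translates $\cG \in \dsg X{w+1}$ into $\cG_0 \in \dsg{Z_n}{w+1}$. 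Next, applying $Li^*$ to the baric triangle $\beta_{\le w}\cF \to \cF \to \cG \to$ and using that $\dsml Zw$ is closed under the two-out-of-three property in distinguished triangles (since $\dsl Zw \cap \cg Z$ is a Serre subcategory by Lemma~\ref{lem:compat}\eqref{it:ltserre}), one obtains $Lt^*\cG_0 \cong Li^*\cG \in \dsml Zw$, where $t : Z \hto Z_n$. Proposition~\ref{prop:rigid}(1) then promotes this to $\cG_0 \in \dsml{Z_n}w$, and combining with $\cG_0 \in \dsg{Z_n}{w+1}$ via Lemma~\ref{lem:unbdd-orth}(1) forces $\End(\cG_0) = 0$, hence $\cG_0 = 0$ and $\cG = 0$.

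The second formula follows by a dual script, with $\cG := \beta_{\le w-1}\cF$: the vanishing of $j^*\cG$ again produces a thickening representative $i_{n,*}\cG_0 \cong \cG$ with $\cG_0 \in \dsl{Z_n}{w-1}$ (Lemma~\ref{lem:induced}), and applying $Ri^!$ to the baric triangle, using two-out-of-three for $\dspg Zw$ (which is the right orthogonal of the shift-stable class $\dsl Z{w-1}$ by Lemma~\ref{lem:unbdd-orth}(2)), and then Proposition~\ref{prop:rigid}(2), gives $\cG_0 \in \dspg{Z_n}w$; the Hom-vanishing axiom then kills $\cG_0$.

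The main technical obstacle is the step producing the nilpotent thickening representative $i_{n,*}\cG_0 \cong \cG$: while each cohomology sheaf of $\cG$ is individually annihilated by some power of $\cI_Z$, descending the entire bounded complex to $\dgb{Z_n}$ requires a short induction on cohomological length together with adjunction computations that transport $\Ext^1$-classes between successive thickenings. All remaining steps are formal manipulations with lemmas already in hand.
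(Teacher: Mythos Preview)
Your overall strategy is reasonable and differs from the paper's: the paper shows $\Hom(\cF,\cG)=0$ for all $\cG\in\dsg X{w+1}$ directly via the localization exact sequence
\[
\varinjlim_{Z_1}\Hom(i_{Z_1*}Li_{Z_1}^*\cF,\cG)\to\Hom(\cF,\cG)\to\Hom(j^*\cF,j^*\cG),
\]
whereas you attack $\beta_{\ge w+1}\cF$ head-on. However, your argument contains a genuine error at the step ``$Lt^*\cG_0\cong Li^*\cG$.'' Writing $i=i_n\circ t$, you have $Li^*\cG = Lt^*\bigl(Li_n^*\,i_{n,*}\cG_0\bigr)$, and $Li_n^*\,i_{n,*}$ is \emph{not} the identity on $\dgb{Z_n}$: for the closed immersion $Z_n=\Spec k[x]/(x^2)\hookrightarrow\mathbb A^1$ and $\cG_0=\cO_{Z_n}$, one computes $Li_n^*\,i_{n,*}\cO_{Z_n}\cong\cO_{Z_n}\oplus\cO_{Z_n}[1]$. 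The dual step in your second argument has the same defect, since $Ri_n^!\,i_{n,*}$ is likewise not the identity.

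The fix is to apply $Li_n^*$ (not $Li^*$) to the baric triangle. From $Li^*\cF=Lt^*(Li_n^*\cF)\in\dsml Zw$, Proposition~\ref{prop:rigid}(1) gives $Li_n^*\cF\in\dsml{Z_n}w$; two-out-of-three then yields $Li_n^*\cG\in\dsml{Z_n}w$. Now use adjunction the other way:
\[
\End(\cG)=\Hom_X(\cG,\,i_{n,*}\cG_0)=\Hom_{Z_n}(Li_n^*\cG,\,\cG_0)=0
\]
by Lemma~\ref{lem:unbdd-orth}, since $\cG_0\in\dsg{Z_n}{w+1}$. The dual case is handled symmetrically with $Ri_n^!$. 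Once corrected, your route works but is heavier than the paper's: you must produce the thickening $Z_n$ carrying $\cG$, whereas the paper's localization sequence handles all thickenings at once without ever lifting $\cG$.
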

\begin{proof}
If $\cF \in \dsl Xw$, then $j^*\cF \in \dsl Uw$ and $Li^*\cF \in \dsml Zw$ by the definition of the induced baric structure.  For the other direction, suppose that $j^*
\cF \in \dsl Uw$ and $Li^* \cF \in \dsml Zw$.  We will prove that $\cF \in
\dsl Xw$ by showing $\Hom(\cF,\cG) = 0$ for all $\cG \in \dsg X{w+1}$.  

Fix $\cG \in \dsg X{w+1}$.  We have an exact sequence
\[
\lim_{\substack{\to \\ {Z_1}}} \Hom(i_{Z_1*}Li_{Z_1}^*\cF,\cG) \to
\Hom(\cF,\cG) \to \Hom(j^*\cF,j^*\cG),
\]
where the limit runs over nilpotent thickenings of $Z$.  
(See, for instance,~\cite[Proposition~2 and Lemma~3(a)]{bez:pcs} for an
explanation of this exact sequence.)  We have $j^* \cF \in \dsl Uw$ and
$j^*\cG \in \dsg U{w+1}$, and by Lemma~\ref{lem:hlr-baric-res}, we have
$i_{Z_1 *} Li_{Z_1}^* \cF \in \dsml Xw$ so the first and third terms
vanish.  We conclude that $\Hom(\cF,\cG)$ also vanishes.  The argument for
$\dsg Xw$ is similar.
\end{proof}

\subsection{Proof of Theorem~\ref{thm:hlr}}
\label{sect:hlrproof}

In this section, we will prove that hereditary baric structures are
automatically also local and rigid.  We begin with a result about baric
truncation functors with respect to a hereditary baric structure.  If $X$
is endowed with a hereditary baric structure, and $\cF \in \dgb X$ is
actually supported on some closed subscheme $i: Z \hto X$, then the baric
truncations of $\cF$ are obtained by taking baric truncations in the
induced baric structure on $Z$, and then pushing them forward by $i_*$.  In
other words, hereditary baric structures have the property that baric
truncation functors preserve support.  More precisely:

\begin{prop}
\label{prop:settheorysupport}
Let $\schemebaric X$ be a hereditary baric structure on $X$.  Then
\begin{enumerate}
\item If $\cF \in \dgb X$ has set-theoretic support on a closed set $Z
\subset X$, then so do $\beta_{\leq w} \cF$ and $\beta_{\geq w} \cF$.
\item If a morphism $u:\cF \to \cG$ in $\dgb X$ has set-theoretic support
on $Z$, in the sense that $u \vert_{X \ssm Z} = 0$, then so do $\beta_{\leq
w}(u)$ and $\beta_{\geq w}(u)$.
\end{enumerate}
\end{prop}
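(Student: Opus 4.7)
The plan is to reduce part~(1) to the case of pushforwards from $Z_\red$ via the hereditary property, and then to derive part~(2) by factoring $u$ through an object supported on $Z$ and applying~(1).

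For part~(1), let $\mathcal{S}(Z) \subset \dgb X$ denote the full subcategory of complexes $\cF$ with $j^*\cF = 0$, where $j: X \ssm Z \hto X$ is the open inclusion. Since $j^*$ is triangulated, $\mathcal{S}(Z)$ is a thick triangulated subcategory. Because $\beta_{\le w}$ and $\beta_{\ge w}$ send distinguished triangles to distinguished triangles, it suffices to verify that they carry a generating set of $\mathcal{S}(Z)$ into itself. Any object of $\mathcal{S}(Z)$ is built, via standard $t$-truncation, from its coherent cohomology sheaves, each of which still has support on $Z$, so we may reduce to the case that $\cF \in \cg X$ is a coherent sheaf. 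By the Noetherian hypothesis, such an $\cF$ admits a finite filtration whose subquotients are annihilated by the ideal of $Z_\red$, and hence of the form $i_*\cG$ with $i: Z_\red \hto X$ and $\cG \in \cg{Z_\red}$. Since the baric structure is hereditary, $Z_\red$ carries an induced baric structure, and the argument in the proof of Lemma~\ref{lem:induced} yields the intertwinings $\beta_{\le w}(i_*\cG) \cong i_*\beta_{\le w}^{Z_\red}\cG$ and $\beta_{\ge w}(i_*\cG) \cong i_*\beta_{\ge w}^{Z_\red}\cG$; both right-hand sides lie manifestly in $\mathcal{S}(Z)$.

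For part~(2), I would construct a factorization $u: \cF \to \cH \to \cG$ with $\cH \in \mathcal{S}(Z)$ and then invoke~(1). The key tool is the local cohomology distinguished triangle $i_*Ri^!\cG \to \cG \to Rj_*j^*\cG \to$. The composition $\cF \xrightarrow{u} \cG \to Rj_*j^*\cG$ corresponds under the $(j^*,Rj_*)$-adjunction to $j^*u = 0$, so $u$ lifts through the counit to some $\tilde u: \cF \to i_*Ri^!\cG$. Although $Ri^!$ need not preserve boundedness, since $\cF$ is bounded $\tilde u$ factors uniquely through $\cH := i_*\tau^{\le N}(Ri^!\cG)$ for any $N$ exceeding the top cohomological degree of $\cF$; this $\cH$ lies in $\mathcal{S}(Z)$. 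By functoriality of $\beta_{\le w}$, $\beta_{\le w}u$ factors as $\beta_{\le w}\cF \to \beta_{\le w}\cH \to \beta_{\le w}\cG$, and by~(1) the middle object lies in $\mathcal{S}(Z)$; hence $(\beta_{\le w}u)|_{X \ssm Z}$ factors through zero and therefore vanishes. The identical argument applies to $\beta_{\ge w}u$.

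The main obstacle is the boundedness issue in part~(2): on a singular scheme $Ri^!$ can inflate cohomological amplitude without bound, so one must justify replacing $i_*Ri^!\cG$ by a bounded truncation. This rests on the standard observation that $\Hom(\cF,\cA) = 0$ whenever $\cF$ is concentrated in degrees $\le b$ and $\cA$ is concentrated in degrees strictly greater than $b$. A secondary technical point in~(1) is the reduction of arbitrary objects of $\mathcal{S}(Z)$ to pushforwards from $Z_\red$, which is handled routinely by combining $t$-structure truncation with the Noetherian filtration inside the abelian category $\cg X$.
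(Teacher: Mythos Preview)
Your argument for part~(1) is correct and follows the same idea as the paper, though you work harder than necessary: the paper observes directly that any bounded complex $\cF$ with set-theoretic support on $Z$ can be written as $i_*\cF'$ for a \emph{single} closed subscheme structure $i: Z_1 \hookrightarrow X$ on $Z$ (some nilpotent thickening of $Z_\red$), and then applies the induced baric structure on $Z_1$ once. Your d\'evissage down to pushforwards from $Z_\red$ via the Noetherian filtration reaches the same conclusion with more steps.

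Part~(2) has a genuine gap. The triangle $i_*Ri^!\cG \to \cG \to Rj_*j^*\cG \to$ does \emph{not} hold in the coherent setting for a fixed closed immersion $i$: the fiber of $\cG \to Rj_*j^*\cG$ is the local cohomology complex $R\Gamma_Z\cG$, which is only quasi-coherent and is not $i_*Ri^!\cG$ for any single subscheme structure on $Z$. (This is one of the places where coherent sheaves behave very differently from constructible ones.) The correct substitute, which the paper uses, is the exact sequence
\[
\varinjlim_{Z'} \Hom(\cF, i_{Z'*}Ri^!_{Z'}\cG) \to \Hom(\cF,\cG) \to \Hom(\cF|_{X\ssm Z}, \cG|_{X\ssm Z}),
\]
with $Z'$ ranging over all closed subscheme structures on $Z$. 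Since $u$ dies in the right-hand term, it lifts to the colimit and hence factors through $i_{Z'*}Ri^!_{Z'}\cG \to \cG$ for \emph{some} thickening $Z'$; your truncation-to-$\tau^{\le N}$ step and the appeal to part~(1) then go through verbatim. So your overall strategy---factor $u$ through a bounded object supported on $Z$ and invoke~(1)---is exactly the paper's, but the factorization must be produced via this colimit rather than via a nonexistent recollement triangle.
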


\begin{proof}
If $\cF$ is set-theoretically supported on $Z$ then there is a subscheme
$i:Z_1 \hookrightarrow X$ of $X$, whose underlying closed set is $Z$, such
that $\cF = i_* \cF'$ for some $\cF' \in \dgb {Z_1}$.  Form the distinguished triangle
\[
\beta_{\leq w} \cF' \to \cF' \to \beta_{\geq w+1} \cF' \to.
\]
By Lemma~\ref{lem:induced}, we have that $i_* \beta_{\leq w}\cF' \in
\dsl Xw$ and $i_* \beta_{\geq w+1}\cF' \in \dsg X{w+1}$.  Since we have a
distinguished triangle
\[
i_* \beta_{\leq w} \cF' \to \cF \to i_* \beta_{\geq w+1} \cF'\to,
\]
we must have $i_* \beta_{\leq w} \cF' \cong \beta_{\leq w} \cF$ and $i_*
\beta_{\geq w+1} \cF' \cong \beta_{\geq w+1} \cF$.  In particular these objects
are set-theoretically supported on $Z$, proving the first assertion.

To prove the second assertion, consider the exact sequence
\[
\lim_{\substack{\to \\ Z'}} \Hom(\cF, i_{Z'*}Ri^!_{Z'}\cG) \to
\Hom(\cF,\cG) \to \Hom(\cF|_{X\ssm Z},\cG|_{X\ssm Z}),
\]
where $i_{Z'}: Z' \hto X$ ranges over all closed subscheme structures on
$Z$.  By assumption, $u \in \Hom(\cF,\cG)$ vanishes upon restriction to $X
\ssm Z$, so we see from the exact sequence above that it must factor
through $i_{Z'*}Ri^!_{Z'}\cG \to \cG$ for some closed subscheme structure
$i_{Z'}: Z' \hto X$ on $Z$.  Now, $i_{Z'*}Ri^!_{Z'}\cG$ is in general an
object of $\dgp X$, but since $\cF$ lies in $\dgb X$, any morphism $\cF \to
i_{Z'*}Ri^!_{Z'}\cG$ factors through $\tau^{\le n}i_{Z'*}Ri^!_{Z'}\cG$ for
sufficiently large $n$.  It follows that $\beta_{\le w}(u)$ and $\beta_{\ge
w}(u)$ factor through $\beta_{\le w}\tau^{\le n}i_{Z'*}Ri^!_{Z'}\cG$ and
$\beta_{\ge w}\tau^{\le n}i_{Z'*}Ri^!_{Z'}\cG$, respectively.  These objects
have set-theoretic support on $Z$ by the first part of the proposition, so
$\beta_{\le w}(u)$ and $\beta_{\ge w}(u)$ have set-theoretic support on $Z$
as well, as desired.
\end{proof}

We may use this fact to prove the following:

\begin{thm}\label{thm:openlocal}
Every hereditary baric structure is local.
\end{thm}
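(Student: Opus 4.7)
The plan is to verify directly the characterization of induced baric structures from Lemma~\ref{lem:induced}: set
\begin{align*}
\dsl Uw &= \{ \cF \in \dgb U \mid \cF \cong j^*\cF_1 \text{ for some } \cF_1 \in \dsl Xw \}, \\
\dsg Uw &= \{ \cF \in \dgb U \mid \cF \cong j^*\cF_1 \text{ for some } \cF_1 \in \dsg Xw \},
\end{align*}
and show that these form a baric structure on $\dgb U$ that is induced from, and inherits hereditariness from, the one on $X$.  The axioms break into three pieces: (i) distinguished truncation triangles exist; (ii) the $\Hom$-vanishing axiom holds; (iii) the resulting baric structure is itself hereditary.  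Closure of $\dsl Uw$ and $\dsg Uw$ under shifts and cones will follow after (i) and (ii) by the usual orthogonality characterization (as in the proof of Proposition~\ref{prop:baric-basic}), which also guarantees thickness.

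For (i), given any $\cF \in \dgb U$, one extends $\cF$ to some $\tilde\cF \in \dgb X$ with $j^*\tilde\cF \cong \cF$ (standard for $G$-equivariant coherent sheaves on a noetherian scheme), applies the truncation triangle $\beta_{\le w}\tilde\cF \to \tilde\cF \to \beta_{\ge w+1}\tilde\cF \to$ in $\dgb X$, and then restricts to $U$; the outer terms lie in $\dsl Uw$ and $\dsg U{w+1}$ by construction.

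The main obstacle is (ii).  Let $\cF = j^*\cF_1 \in \dsl Uw$, $\cG = j^*\cG_1 \in \dsg U{w+1}$, and $u : \cF \to \cG$.  Since $\dgb U$ is a Verdier quotient of $\dgb X$ by complexes supported on $Z := X \ssm U$, one represents $u$ by a roof $\cF_1 \xleftarrow{q} \tilde\cF_1 \xrightarrow{v} \cG_1$ in which $\text{cone}(q)$ is supported on $Z$.  Now comes the crucial input: applying $\beta_{\le w}$ to the triangle $\tilde\cF_1 \to \cF_1 \to \text{cone}(q) \to$ and using Proposition~\ref{prop:settheorysupport}, the cone of the induced map $b : \beta_{\le w}\tilde\cF_1 \to \beta_{\le w}\cF_1 = \cF_1$ remains supported on $Z$, so $j^*b$ is an isomorphism.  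On the other hand, the composition $\beta_{\le w}\tilde\cF_1 \to \tilde\cF_1 \xrightarrow{v} \cG_1$ vanishes by the $\Hom$-vanishing of the ambient baric structure, since $\beta_{\le w}\tilde\cF_1 \in \dsl Xw$ and $\cG_1 \in \dsg X{w+1}$.  Applying $j^*$ and using the naturality identity $q\circ(\beta_{\le w}\tilde\cF_1 \to \tilde\cF_1) = b$ shows that $u \circ j^*b = 0$; since $j^*b$ is an isomorphism, $u = 0$, as desired.

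Once (i) and (ii) yield a baric structure on $U$, the orthogonal characterization together with the definition of $\dsl Uw,\dsg Uw$ makes $j^*$ baryexact, so by Lemma~\ref{lem:induced} this baric structure is induced from $X$.  For (iii), let $\kappa : Y \hto U$ be a closed subscheme and let $i : \overline Y \hto X$ be its scheme-theoretic closure, so that $Y = j^{-1}(\overline Y)$ is open in $\overline Y$; write $\ell : Y \hto \overline Y$ and $j\circ \kappa = i\circ \ell$.  By heredity of $X$, the subscheme $\overline Y$ carries an induced baric structure, which is hereditary by Lemma~\ref{lem:closed-hered}.  Applying the (just-proved) part (i)--(ii) of the theorem to $\overline Y$ equips $Y$ with an induced baric structure from $\overline Y$.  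To see that this coincides with an induced baric structure from $U$, one uses the factorization $L\kappa^* j^* \cong \ell^* L i^*$ and $R\kappa^! j^* \cong R\ell^! Ri^!$, together with Lemma~\ref{lem:hlr-baric-res} applied to $i$ and to $\ell$, to conclude that $L\kappa^*$ is right baryexact and $R\kappa^!$ is left baryexact.  Thus the baric structure on $U$ is hereditary, completing the proof.
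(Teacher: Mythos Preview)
Your argument is correct and the overall architecture matches the paper's, but the load-bearing step~(ii) is organized differently.  The paper first proves thickness of $\dsl Uw$ and $\dsg Uw$ directly (its Proposition~\ref{prop:hered-open}), using Proposition~\ref{prop:settheorysupport} to control cones and summands, and only afterwards proves $\Hom$-vanishing via the localization exact sequence
\[
\Hom(\cF_1,\cG_1) \to \Hom(\cF,\cG) \to \varinjlim_{Z'} \Hom(i_{Z'*}Li_{Z'}^*\cF_1, \cG_1[1]),
\]
killing the outer terms with Lemma~\ref{lem:hlr-baric-res}.  You invert this order: you prove $\Hom$-vanishing first by a direct roof manipulation, using Proposition~\ref{prop:settheorysupport} to make $j^*b$ invertible, and then you get thickness for free from the orthogonality description $\dsl Uw = {}^{\perp}(\dsg U{w+1})$.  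Your route is a bit more economical, since it avoids the separate summand argument in Proposition~\ref{prop:hered-open}; the paper's route has the advantage that the localization sequence is a tool reused elsewhere (e.g.\ in the gluing theorem).  Your step~(iii) is the same as the paper's.

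One small omission: to invoke the converse direction of Lemma~\ref{lem:induced} you need $\schemebaric U$ to be a baric structure \emph{on the scheme} $U$, which by definition includes compatibility with the standard $t$-structure.  You should note that $\tau^{\le n}$, $\tau^{\ge n}$ preserve $\dsl Uw$ (because they commute with $j^*$ and preserve $\dsl Xw$) and that $\beta_{\le w}$, $\beta_{\ge w}$ on $U$ are left $t$-exact (because $j^*\beta_{\le w}\tilde\cF \cong \beta_{\le w}(j^*\tilde\cF)$ and $j^*$ is $t$-exact).  This is routine, but strictly speaking it is part of what must be checked.
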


We will prove this theorem over the course of the following three
propositions.  Recall from Lemma~\ref{lem:induced} that in a local baric
structure, the induced baric structures on open subschemes necessarily have
the form given in the proposition below.

\begin{prop}\label{prop:hered-open}
Let $\schemebaric X$ be a hereditary baric structure on $X$, and let $U$ be
an open subschme of $X$.  For any $w \in \Z$, define full subcategories of
$\dgb U$ as follows:
\begin{align*}
\dsl Uw &= \{ \cF \in \dgb U \mid \text{$\cF \cong j^*\cF_1$ for some
$\cF_1 \in \dsl Xw$} \}, \\
\dsg Uw &= \{ \cF \in \dgb U \mid \text{$\cF \cong j^*\cF_1$ for some
$\cF_1 \in \dsg Xw$} \}.
\end{align*}
Then $\dsl Uw$ and $\dsg Uw$ are thick subcategories of $\dgb U$.
\end{prop}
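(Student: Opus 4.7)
The plan is to prove a characterization of $\dsl Uw$ that makes thickness transparent: namely, that $\cF \in \dgb U$ lies in $\dsl Uw$ if and only if some (equivalently, every) $\tilde\cF \in \dgb X$ with $j^*\tilde\cF \cong \cF$ has $\beta_{\ge w+1}\tilde\cF$ set-theoretically supported on $Z = X \ssm U$. Since the class of complexes supported on $Z$ is closed under direct summands, this will immediately yield closure of $\dsl Uw$ under summands, once we note that every object of $\dgb U$ admits at least one extension to $\dgb X$ by standard coherent sheaf theory. The argument for $\dsg Uw$ will be formally dual, using $\beta_{\le w-1}\tilde\cF$ in place of $\beta_{\ge w+1}\tilde\cF$.

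The backward direction of the characterization is immediate: if $\beta_{\ge w+1}\tilde\cF$ is supported on $Z$, the baric truncation triangle $\beta_{\le w}\tilde\cF \to \tilde\cF \to \beta_{\ge w+1}\tilde\cF \to$ restricts on $U$ to an isomorphism $j^*\beta_{\le w}\tilde\cF \cong \cF$, exhibiting $\beta_{\le w}\tilde\cF \in \dsl Xw$ as the desired lift. For the forward direction, assume $\cF \cong j^*\cF_1$ with $\cF_1 \in \dsl Xw$, and let $\tilde\cF$ be any other extension. Using the standard presentation of $\dgb U$ as the Verdier quotient of $\dgb X$ by the thick subcategory of complexes supported on $Z$, we represent the isomorphism $j^*\cF_1 \cong j^*\tilde\cF$ by a roof $\cF_1 \leftarrow \cF_0 \to \tilde\cF$ in $\dgb X$ with both maps having cones supported on $Z$. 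Applying $\beta_{\ge w+1}$, invoking Proposition~\ref{prop:settheorysupport} (so that baric truncations of complexes supported on $Z$ remain supported on $Z$), and using the vanishing $\beta_{\ge w+1}\cF_1 = 0$, we conclude that $\beta_{\ge w+1}\tilde\cF$ is supported on $Z$.

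With the characterization in hand, the remaining thickness axioms fall out. Closure under shifts is trivial, and closure under direct summands follows as noted above. For closure under cones, given a distinguished triangle $A \to B \to C \to$ with $A \cong j^*A_1$ and $B \cong j^*B_1$ for some $A_1, B_1 \in \dsl Xw$, we lift the morphism $A \to B$ via the calculus of fractions to some $\tilde u: A_1' \to B_1$ in $\dgb X$, where $A_1' \to A_1$ has cone supported on $Z$. Replacing $A_1'$ by $\beta_{\le w}A_1'$ and invoking Proposition~\ref{prop:settheorysupport} once more to verify that $\beta_{\le w}A_1' \to A_1$ still has cone supported on $Z$, we obtain a morphism between objects of $\dsl Xw$ whose mapping cone $C_1 \in \dsl Xw$ restricts on $U$ to $C$, establishing $C \in \dsl Uw$.

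The main technical obstacle I anticipate is the invocation of the Verdier quotient description of $\dgb U$ and the associated calculus of fractions, which in turn rely on the existence of coherent extensions of complexes from $U$ to $X$; these facts are standard in the equivariant coherent setting but must be appealed to explicitly. Once they are available, Proposition~\ref{prop:settheorysupport} does essentially all of the work, and the proof for $\dsg Uw$ proceeds by the formally dual characterization.
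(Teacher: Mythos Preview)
Your proof is correct, and it rests on the same key input as the paper's argument: Proposition~\ref{prop:settheorysupport}, together with the description of $\dgb U$ as a Verdier quotient of $\dgb X$.  The organization, however, is genuinely different.

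You begin by isolating a characterization---$\cF \in \dsl Uw$ if and only if every (equivalently, some) extension $\tilde\cF$ has $\beta_{\ge w+1}\tilde\cF$ supported on $Z$---and then read off all three thickness conditions from it.  In particular, closure under summands becomes almost trivial: given $A \oplus B \in \dsl Uw$, choose any extensions $\tilde A, \tilde B$, observe that $\tilde A \oplus \tilde B$ extends $A \oplus B$, and use additivity of $\beta_{\ge w+1}$ together with the fact that $j^*$ detects support on $Z$.  The paper does not state this characterization; instead it treats summands by lifting the split triangle $\cF \to \cF \oplus \cG \to \cG \to$ to a (generally non-split) triangle on $X$ and then running a $9$-lemma argument to conclude that $\beta_{\ge w+1}\cF_1$ and $\beta_{\ge w+1}\cG_1$ are supported on $Z$.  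Your route avoids that diagram chase entirely.  For cones the two arguments are essentially the same calculus-of-fractions lift followed by baric truncation; the paper moves the target ($\cF_1 \to \cG_2 \leftarrow \cG_1$) while you move the source ($A_1 \leftarrow A_1' \to B_1$), which is immaterial.  Overall your approach is slightly more conceptual and shorter, at the cost of front-loading the characterization lemma.
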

\begin{proof}
Suppose that $\cF$ and $\cG$ belong to $\dsl Uw$, so that there exist
$\cF_1$ and $\cG_1$ in 
$\dsl Xw$ with $\cF_1 \vert_U \cong \cF$ and $\cG_1 \vert_U \cong \cG$. 
Since $\dgb U$ is a localization of $\dgb X$, we may find for every
morphism $u:\cF \to \cG$ an object $\cG_2 \in \dgb X$
and a diagram
$\cF_1 \to \cG_2 \leftarrow \cG_1$
such that $(\cG_2 \leftarrow \cG_1)\vert_U$ is an isomorphism, and the
composition
\[
\cF \cong \cF_1 \vert_U \to \cG_2 \vert_U \cong \cG_1 \vert_U \cong
\cG
\]
coincides with $u$.  We claim that the diagram
\[
\beta_{\leq w} \cF_1 \to \beta_{\leq w} \cG_2 \leftarrow \beta_{\leq w}
\cG_1
\]
has the same property.  In that case, the cone on the composition $\cF_1
\cong \beta_{\leq w} \cF_1 \to \beta_{\leq w} \cG_2$ belongs to $\dsl Xw$,
which shows that the cone on $u:\cF \to \cG$ belongs to $\dsl Uw$.  To
prove the claim, note that the cone on the map $\cG_1 \to \cG_2$ is
set-theoretically supported on the closed set $X \ssm U$, and since
the baric structure $\schemebaric X$ is hereditary, the same must be true
for the cone on
$\beta_{\leq w} \cG_1 \to \beta_{\leq w} \cG_2$; in particular the
restriction of the latter map to $U$ is an isomorphism.  

We have shown that the $\dsl Uw \subset \dgb U$ is a triangulated subcategory.  To show that it is thick we have to show that it is also closed under summands -- \emph{i.e.} that if $\cF \oplus \cG \in \dsl Uw$ then $\cF$ and $\cG$ also belong to $\dsl Uw$.   Thus suppose that $\cF \oplus \cG$ belongs to $\dsl Uw$.  Since $\dgb U$ is a localization of $\dgb X$, we may find a triangle
$$\cF_1 \to \cH \to \cG_1 \to$$
whose restriction to $U$ is isomorphic to the triangle
$$\cF \to \cF \oplus \cG \to \cG \to $$
In particular the map $\cG_1 \to \cF_1[1]$ is set-theoretically supported on $X \ssm U$, so by proposition \ref{prop:settheorysupport} the same must be true of $\beta_{\leq w} \cG_1 \to \beta_{\leq w} \cF_1$.  From the diagram
$$
\xymatrix{
\beta_{\leq w} \cF_1 \ar[r] \ar[d] & \cH \ar[r] \ar[d] & \beta_{\leq w} \cG_1 \ar[r] \ar[d]& \\
\cF_1 \ar[r] \ar[d] & \cH \ar[r] \ar[d] & \cG_1 \ar[r] \ar[d]& \\
\beta_{\geq w+1} \cF_1 \ar[r] & 0 \ar[r]& \beta_{\geq w+1} \cG_1 \ar[r] & \\
}
$$
whose rows and columns are distinguished triangles, we see that $\beta_{\geq w+1} \cG_1 \to \beta_{\geq w+1} \cF_1$ is an isomorphism.  But since this morphism has set-theoretic support on $X - U$ the objects $\beta_{\geq w+1} \cF_1$ and $\beta_{\geq w+1} \cG_1$ must have set-theoretic support on $X- U$ which implies there are isomorphisms $\beta_{\leq w} \cF_1 \vert_U \cong \cF$ and $\beta_{\leq w} \cG_1 \vert_U \cong \cG$.  Thus $\cF$ and $\cG$ belong to $\dsl Uw$.

A similar proof shows that the subcategories $\dsg Uw$ are thick.
\end{proof}

\begin{prop}
Let $\schemebaric X$ be a hereditary baric structure on $X$, let $U$ be an
open subscheme of $X$, and 
let $\schemebaric U$ be as in Proposition~\ref{prop:hered-open}.  Then
$\schemebaric U$ is a baric structure on $\dgb U$, compatible with the
standard $t$-structure.
\end{prop}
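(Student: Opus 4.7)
The plan is to verify, in turn, the three axioms of Definition~\ref{defn:baric} together with the compatibility of the resulting baric structure with the standard $t$-structure. Axiom~(1), the nesting $\dsl Uw \subset \dsl U{w+1}$ and $\dsg Uw \supset \dsg U{w+1}$, is immediate from the corresponding inclusions on $X$. For Axiom~(3), given any $\cF \in \dgb U$, I would choose an extension $\cF_1 \in \dgb X$ with $j^*\cF_1 \cong \cF$ (which exists by standard extension results for equivariant coherent sheaves on noetherian schemes) and apply $j^*$ to the baric truncation triangle
\[
\beta_{\le w}\cF_1 \to \cF_1 \to \beta_{\ge w+1}\cF_1 \to
\]
on $X$; the resulting triangle in $\dgb U$ has outer terms in $\dsl Uw$ and $\dsg U{w+1}$ by the very definition of those subcategories.

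The crux is Axiom~(2). Fix $\cF = j^*\cF_1 \in \dsl Uw$ with $\cF_1 \in \dsl Xw$, fix $\cG = j^*\cG_1 \in \dsg U{w+1}$ with $\cG_1 \in \dsg X{w+1}$, and let $u: \cF \to \cG$ be any morphism. Since $\dgb U$ is a Verdier localization of $\dgb X$, I can represent $u$ by a roof $\cF_1 \xrightarrow{f} \cG_2 \xleftarrow{s} \cG_1$ in $\dgb X$ (as already exploited in the proof of Proposition~\ref{prop:hered-open}), where the cone of $s$ is set-theoretically supported on $X \ssm U$. Applying $\beta_{\le w}$ to the distinguished triangle $\cG_1 \xrightarrow{s} \cG_2 \to \mathrm{cone}(s) \to$ and using $\beta_{\le w}\cG_1 = 0$ (since $\cG_1 \in \dsg X{w+1}$), I obtain $\beta_{\le w}\cG_2 \cong \beta_{\le w}\mathrm{cone}(s)$, which is set-theoretically supported on $X \ssm U$ by Proposition~\ref{prop:settheorysupport}. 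On the other hand, since $\cF_1 \in \dsl Xw$, the adjunction gives $\Hom(\cF_1, \cG_2) \cong \Hom(\cF_1, \beta_{\le w}\cG_2)$, so $f$ factors through $\beta_{\le w}\cG_2$; restricting to $U$ the factoring object vanishes, hence $j^*f = 0$, forcing $u = 0$.

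For compatibility with the standard $t$-structure, exactness of $j^*$ yields $\tau^{\le n}j^* \cong j^*\tau^{\le n}$, and compatibility on $X$ then implies that $\tau^{\le n}$ and $\tau^{\ge n}$ preserve $\dsl Uw$, proving right baryexactness on $U$. For left $t$-exactness of the baric truncations $\beta_{\le w}^U$ and $\beta_{\ge w}^U$, the argument used for Axiom~(3) shows that they can be computed as $j^*\beta_{\le w}^X$ and $j^*\beta_{\ge w}^X$ applied to any extension; given an object of $\dgbg Un$ one chooses an extension inside $\dgbg Xn$ (by applying $\tau^{\ge n}$ to an arbitrary extension) and appeals to left $t$-exactness of the baric truncations on $X$ together with exactness of $j^*$. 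The main obstacle is clearly Axiom~(2): without Proposition~\ref{prop:settheorysupport}, there would be no way to control how morphisms between pullbacks interact with the upstairs baric structure, since the cones arising in the Verdier roof only have \emph{set-theoretic}, not scheme-theoretic, support on $X \ssm U$.
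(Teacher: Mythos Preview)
Your argument is correct. The only place it diverges from the paper is in the verification of Axiom~(2), and there the difference is genuine. The paper uses the localization exact sequence
\[
\Hom(\cF_1,\cG_1) \to \Hom(\cF,\cG) \to \varinjlim_{i:Z \hto X} \Hom(i_*Li^* \cF_1, \cG_1[1]),
\]
killing the last term via Lemma~\ref{lem:hlr-baric-res}, which says $i_*Li^*\cF_1 \in \dsml Xw$. You instead stay with the roof picture from Proposition~\ref{prop:hered-open}: factor $f$ through $\beta_{\le w}\cG_2$ by adjunction, then use Proposition~\ref{prop:settheorysupport} to see that $\beta_{\le w}\cG_2 \cong \beta_{\le w}\mathrm{cone}(s)$ has set-theoretic support on $X \ssm U$, whence $j^*f = 0$. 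Your route is arguably more self-contained, reusing exactly the toolkit already deployed in Proposition~\ref{prop:hered-open} and avoiding the localization exact sequence and the pullback lemma; the paper's route, on the other hand, foreshadows the gluing exact sequence that recurs throughout the later sections. Both ultimately rest on the hereditary hypothesis through Proposition~\ref{prop:settheorysupport} (yours directly, the paper's via Lemma~\ref{lem:hlr-baric-res}). Your treatment of Axioms~(1), (3), and compatibility matches the paper's.
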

\begin{proof}
It is clear that $\dsl Uw \subset \dsl U{w+1}$ and $\dsg Uw \supset \dsg
U{w+1}$ and that $\dgb U = \dsl Uw * \dsg U{w+1}$.  If $\cF \in \dsl Uw$
and $\cG \in \dsg U{w+1}$, then we have an exact sequence
\[
\Hom(\cF_1,\cG_1) \to \Hom(\cF,\cG) \to
\varinjlim_{i:Z \hto X} \Hom(i_*Li^* \cF_1, \cG_1[1]) \to
\]
where $\cF_1$ is an extension of $\cF$ to $\dsl Xw$, $\cG_1$ is an
extension of $\cG$ to $\dsg X{w+1}$, and $i:Z \hookrightarrow X$ runs over
all subscheme structures on $X \ssm U$.  The first term above
vanishes automatically, and each of the terms $\Hom(i_* Li^*
\cF_1,\cG_1[1])$ vanishes because, by Lemma~\ref{lem:hlr-baric-res},
$i_*Li^*\cF_1 \in \dsml Xw$. Thus, $\Hom(\cF,\cG) = 0$ and
$\schemebaric U$ is a baric structure on $\dgb U$.

By assumption the baric structure $\schemebaric X$ is compatible with the standard $t$-structure on $\dgb X$.  Thus if $\cF_1$ belongs to $\dsl Xw$ then so
do $\tau^{\leq n} \cF_1$ and $\tau^{\geq n} \cF_1$.  The objects $\cF_1 \vert_U$,
$(\tau^{\leq n} \cF_1) \vert_U \cong \tau^{\leq n} (\cF_1 \vert_U)$ and
$(\tau^{\geq n} \cF_1)\vert_U \cong \tau^{\geq n} (\cF_1 \vert_U)$ therefore all
belong to $\dsl Uw$.  Similarly, we have $(\beta_{\leq w} \cF_1)\vert_U
\cong \beta_{\leq w} (\cF_1\vert_U)$ and $(\beta_{\geq w} \cF_1)\vert_U
\cong \beta_{\geq w} (\cF_1 \vert_U)$ so that the baric truncation
functors preserve $\dgb U^{\geq 0}$.  Thus the baric structure
$\schemebaric U$ is compatible with the standard $t$-structure on $\dgb U$.
\end{proof}

\begin{prop}
Let $\schemebaric X$ be a hereditary baric structure on $X$, and let $U$ be
an open subscheme of $X$.  Then the collection of categories $\schemebaric
U$ defined in Proposition~\ref{prop:hered-open} constitute a
hereditary baric structure on $U$.
\end{prop}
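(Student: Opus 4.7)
Let $\kappa: Y \hto U$ be a closed subscheme; we must produce a baric structure on $\dgb Y$ induced from $\schemebaric U$. The idea is to route the construction through the scheme-theoretic closure of $Y$ in $X$, where the given hereditariness of $\schemebaric X$ provides the needed structure, and then to verify that the resulting baric structure on $Y$ is in fact induced from $\schemebaric U$ by a base-change argument.

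Let $i: \overline{Y} \hto X$ be the scheme-theoretic closure of the image of $\kappa$, chosen so that $k: Y \hto \overline{Y}$ is an open immersion; then the square with vertices $Y, \overline Y, U, X$ and edges $k, i, \kappa, j$ is Cartesian. Since $\schemebaric X$ is hereditary, $\overline{Y}$ inherits an induced baric structure, which is itself hereditary by Lemma~\ref{lem:closed-hered}. Applying the preceding two propositions to the open subscheme $Y \hto \overline{Y}$ then equips $Y$ with a baric structure $\schemebaric Y$ on $\dgb Y$ which is compatible with the standard $t$-structure.

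It remains to show that $\schemebaric Y$ is induced from $\schemebaric U$, i.e.\ that $L\kappa^*$ is right baryexact and $R\kappa^!$ is left baryexact. Fix $\cF \in \dsml Uw$. By Lemma~\ref{lem:unbdd-orth}, it suffices to show $\Hom_Y(L\kappa^*\cF, \cG) = 0$ for every $\cG \in \dsg Y{w+1}$. By construction of $\schemebaric Y$, any such $\cG$ has the form $\cG \cong k^*\cH$ for some $\cH \in \dsg {\overline{Y}}{w+1}$, and then $i_*\cH \in \dsg X{w+1}$ by Lemma~\ref{lem:induced}. Flat base change on the Cartesian square gives $\kappa_* k^* \cong j^* i_*$, and since $\kappa$ is a closed immersion we may identify $R\kappa_*$ with $\kappa_*$; adjunction therefore yields
\[
\Hom_Y(L\kappa^*\cF, \cG) \cong \Hom_U(\cF, j^* i_*\cH).
\]
Now $j^* i_*\cH \in \dsg U{w+1}$ by the definition of $\schemebaric U$ from Proposition~\ref{prop:hered-open}, so the right-hand side vanishes by another application of Lemma~\ref{lem:unbdd-orth}. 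A symmetric argument, relying on $\Hom_Y(\cG, R\kappa^!\cF) \cong \Hom_U(j^* i_*\cH, \cF)$ for $\cG = k^*\cH \in \dsl Y{w-1}$ and $\cF \in \dspg Uw$, shows that $R\kappa^!$ is left baryexact.

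The technical core of the argument is the base-change identification $\kappa_* k^* \cong j^* i_*$, but this should be unproblematic because $j, k$ are flat open immersions while $i, \kappa$ are exact closed immersions, so no higher derived corrections intervene. Modulo that, the proof is mostly an exercise in chaining together Lemma~\ref{lem:unbdd-orth}, Lemma~\ref{lem:induced}, and the hereditariness of the induced structure on $\overline Y$; the main subtlety to watch for is ensuring that the scheme-theoretic closure produces a genuinely Cartesian square in the equivariant setting, which is the case because $\overline Y \cap U = Y$ scheme-theoretically whenever $U$ is open in $X$.
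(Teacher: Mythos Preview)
Your proof is correct and follows the same overall strategy as the paper: pass to the scheme-theoretic closure $\overline{Y}$ in $X$, use hereditariness of $\schemebaric X$ and Lemma~\ref{lem:closed-hered} to get a hereditary baric structure on $\overline{Y}$, and then restrict to the open subscheme $Y$ using the two preceding propositions.

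The difference lies in the final verification. The paper shows directly that the two candidate descriptions of $\dsl Yw$---one via extensions to $\dsl Xw$ restricted to $U$, the other via extensions to $\dsl{\overline Y}w$ restricted to $Y$---coincide, by explicitly producing objects witnessing each containment (using $\beta_{\le w}$ to correct an arbitrary extension). You instead verify the defining property of an induced baric structure (baryexactness of $L\kappa^*$ and $R\kappa^!$) by an adjunction argument, reducing to the flat base-change isomorphism $\kappa_* k^* \cong j^* i_*$ on the Cartesian square. Your route is somewhat more conceptual and avoids manipulating explicit extensions; the paper's route is more hands-on but does not rely on the base-change identity. Both are short, and neither gains a real advantage over the other.
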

\begin{proof}
Using Lemma~\ref{lem:induced} and the previous proposition, we know that
the baric structure $\schemebaric U$ is induced from the one on $X$.  It
remains only to show that this baric structure is hereditary.  Let $i:Y
\hookrightarrow U$ be a closed subscheme of $U$.  By
Lemma~\ref{lem:induced}, we must prove that the following categories
constitute a baric structure on $Y$:
\begin{align*}
\dsl Yw &= \{ \cF \in \dgb Y \mid 
\text{$i_*\cF \cong \cF_1|_U$ for some $\cF_1 \in \dsl Xw$} \}, \\
\dsg Yw &= \{ \cF \in \dgb Y \mid 
\text{$i_*\cF \cong \cF_1|_U$ for some $\cF_1 \in \dsg Xw$} \}.
\end{align*}
Let $\overline{Y}$ be the closure of $Y$ in $X$, and let $i_1: \overline Y
\hto X$ be the inclusion map, so that we have a commutative square of
inclusions
\[
\xymatrix@=12pt{
Y \ar@{^{(}->}[r]^i \ar@{^{(}->}[d] & U \ar@{^{(}->}[d] \\
\overline{Y} \ar@{^{(}->}[r]^{i_1} & X}
\]

By definition, the hereditary baric structure on $X$ induces a baric
structure on $\overline{Y}$.  This baric structure is itself hereditary,
by Lemma~\ref{lem:closed-hered}.  Thus,
by the previous proposition, the baric structure on $\overline Y$ induces
one on its open subscheme $Y$.  This is
given by
\begin{align*}
(\dsl Yw)' &= \{ \cF \mid
\text{$\cF \cong \cF_2|_Y$ for some $\cF_2 \in \dgb {\overline Y}$ with
$i_{1*}\cF_2 \in \dsl Xw$} \}, \\
(\dsg Yw)' &= \{ \cF \mid
\text{$\cF \cong \cF_2|_Y$ for some $\cF_2 \in \dgb {\overline Y}$ with
$i_{1*}\cF_2 \in \dsg Xw$} \}.
\end{align*}
It suffices now to show that $\dsl Yw = (\dsl Yw)'$ and $\dsg Yw = (\dsg
Yw)'$.  If $\cF \in \dgb Y$ is such that we may find $\cF_2 \in
\dgb{\overline{Y}}$ with $\cF_2 \vert_Y \cong \cF$ and $i_* \cF_2 \in \dsl
Xw$ then $\cF_1 := i_{1*} \cF_2$ has the property that $\cF_1 \vert_U \cong
i_* \cF$.  Thus, $(\dsl Yw)' \subset \dsl Yw$.  To show the reverse
inclusion, let $\cF \in \dgb Y$ and $\cF_1 \in \dsl Xw$ be such that $\cF_1
\vert_U \cong i_* \cF$, and let $\cF_2' \in \dgb {\overline{Y}}$ be such
that there exists a map $i_{1*} \cF_2' \to \cF_1$ which is an isomorphism
over $U$.  Then $i_{1*} \beta_{\leq w} \cF_2' \to \cF_1$ is also an
isomorphism over $U$, and $\cF_2 := \beta_{\leq w} \cF_2'$ has the property
that $\cF_2 \vert_Y \cong \cF$ and $i_{1*} \cF_2 \in \dsl Xw$.  Thus,
$(\dsl Yw)' = \dsl Yw$.  A similar argument shows that $(\dsg Yw)' = \dsg
Yw$.
\end{proof}

Let us finally show that hereditary baric structures are rigid.

\begin{prop}
\label{prop:HimpliesR1}
Let $\schemebaric X$ be a hereditary baric structure on $X$.  Then
$\schemebaric X$ is rigid.
\end{prop}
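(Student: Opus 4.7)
The plan is to prove the equalities in~\eqref{eqn:rigid} by exhibiting the thick closures of $t_*(\dsl Zw)$ and $t_*(\dsg Zw)$ as natural targets of the baric truncation functors on $Z_1$.  Write $\cA_w$ for the thick closure of $t_*(\dsl Zw)$ in $\dgb{Z_1}$ and $\cB_w$ for the thick closure of $t_*(\dsg Zw)$.  The inclusions $\cA_w \subseteq \dsl{Z_1}w$ and $\cB_w \subseteq \dsg{Z_1}w$ are immediate from the hereditary property: for $\cG \in \dsl Zw$, Lemma~\ref{lem:induced} (applied to $Z \hookrightarrow X$ and to $Z_1 \hookrightarrow X$) gives $i_*(t_*\cG) = (it)_*\cG \in \dsl Xw$, whence $t_*\cG \in \dsl{Z_1}w$; and similarly for $\dsg$.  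To finish the proof it therefore suffices to show
\begin{equation*}
(\ast) \qquad \beta_{\le w}^{Z_1}\cX \in \cA_w \quad\text{and}\quad \beta_{\ge w+1}^{Z_1}\cX \in \cB_{w+1} \qquad \text{for every } \cX \in \dgb{Z_1},
\end{equation*}
since then $\cX \in \dsl{Z_1}w$ forces $\cX = \beta_{\le w}^{Z_1}\cX \in \cA_w$ (and dually for $\dsg$).

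Let $\cC \subseteq \dgb{Z_1}$ denote the class of objects satisfying $(\ast)$.  I will prove $\cC = \dgb{Z_1}$ by a standard devissage.  First, $\cC$ is stable under shifts and extensions: baric truncation commutes with $[1]$ and sends distinguished triangles to distinguished triangles, and the thick subcategories $\cA_w$ and $\cB_{w+1}$ are closed under shifts and extensions.  Second, $\cC$ contains every object of the form $t_*\cG$ for $\cG \in \dgb Z$: applying $t_*$ to the baric truncation triangle
\[
\beta_{\le w}^Z \cG \to \cG \to \beta_{\ge w+1}^Z \cG \to
\]
produces a distinguished triangle in $\dgb{Z_1}$ whose first term lies in $t_*(\dsl Zw) \subseteq \dsl{Z_1}w$ and whose last lies in $t_*(\dsg Z{w+1}) \subseteq \dsg{Z_1}{w+1}$; by uniqueness of baric truncation (Proposition~\ref{prop:baric-basic}), this triangle is canonically the baric truncation of $t_*\cG$, so $\beta_{\le w}^{Z_1}(t_*\cG) \cong t_*\beta_{\le w}^Z\cG$ belongs to $t_*(\dsl Zw) \subseteq \cA_w$, and symmetrically for $\beta_{\ge w+1}^{Z_1}$.

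The last ingredient is that $\dgb{Z_1}$ is generated as a triangulated category (under extensions and shifts, no summands needed) by $t_*(\dgb Z)$.  Any $\cX \in \dgb{Z_1}$ is an iterated extension of shifts of its cohomology sheaves, so the question reduces to coherent sheaves.  Given $\cF \in \cg{Z_1}$, let $\cI \subset \cO_{Z_1}$ be the ideal of $Z$; since $Z_1$ is a nilpotent thickening of $Z$, we have $\cI^N = 0$ for some $N$, and the filtration
\[
\cF \supseteq \cI\cF \supseteq \cI^2\cF \supseteq \cdots \supseteq \cI^N\cF = 0
\]
has successive quotients annihilated by $\cI$, hence of the form $t_*\cG_k$ for some $\cG_k \in \cg Z$.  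Combining the three bullet points, $\cC = \dgb{Z_1}$, which establishes $(\ast)$ and completes the proof.

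I do not anticipate any serious obstacle: the real content is identifying $\beta_{\le w}^{Z_1} \circ t_* \cong t_* \circ \beta_{\le w}^Z$ via uniqueness of baric truncation and then running the $\cI$-adic devissage.  The delicate point to watch is that one must use the existing baric structure on $Z_1$ (supplied by the hereditary hypothesis applied to $Z_1 \hookrightarrow X$) to access the functor $\beta_{\le w}^{Z_1}$ at all, so the argument is not constructing a new baric structure on $Z_1$ from scratch but rather identifying the existing one with the natural candidate $(\cA_w, \cB_w)$.
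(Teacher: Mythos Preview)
Your proof is correct and follows essentially the same approach as the paper's: both arguments use the $\cI$-adic filtration to express every object of $\dgb{Z_1}$ as an iterated extension of objects pushed forward from $Z$, and both identify $\beta_{\le w}^{Z_1}\circ t_* \cong t_*\circ\beta_{\le w}^Z$ via uniqueness of baric truncation (the paper cites Lemma~\ref{lem:induced} for this). Your packaging via the auxiliary class $\cC$ is a mild reorganization, but the content is identical.
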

\begin{proof}
Let $Z$ be a subscheme of $X$ and let $Z_1$ be a nilpotent thickening of
$Z$ in $X$, and write $t$ for inclusion of $Z$ into $Z_1$.
If $\cF$ is a bounded chain complex of coherent sheaves on $Z_1$, then we
may find a filtration of $\cF$ by subcomplexes $\cF_k$ whose subquotients
are scheme-theoretically supported on $Z$.  Thus in $\dgb {Z_1}$ we may
find a sequence of objects and maps 
\[
0 = \cF_0 \to \cF_1 \to \cF_2 \to \cdots \to \cF_n = \cF
\]
such that the cone on $\cF_{k-1} \to \cF_k$ is of the form $t_* \cG_k$. 
Now suppose $\cF$ belongs to $\dsl {Z_1}w$.  Then we may apply $\beta_{\leq
w}$ to the sequence to obtain
\[
0 = \beta_{\leq w} \cF_0 \to \beta_{\leq w} \cF_1 \to \cdots \to
\beta_{\leq w} \cF_n = \cF
\]
and distinguished triangles
\[
\beta_{\leq w} \cF_{k-1} \to \beta_{\leq w} \cF_k \to \beta_{\leq w} t_*
\cG_k \to.
\]
It follows from Lemma~\ref{lem:induced} that the object $\beta_{\leq w} t_*
\cG_k$ is isomorphic to $t_* \beta_{\leq w} \cG_k$.  Thus, $\cF$ is in the
thick closure of the image of $\dsl Zw$ under $t_*$.  A similar proof gives
the same result for $\dsg {Z_1}w$.
\end{proof}

This completes the proof of Theorem~\ref{thm:hlr}.

\section{Background on $s$-structures and Staggered Sheaves}
\label{sect:stagt}

In this section, we review the $t$-structures on derived categories of
equivariant coherent sheaves that were introduced in~\cite{a}.  (They were
called ``staggered $t$-structures'' in {\it loc.~cit.}; in
Section~\ref{sect:stag2}, we will prove that they usually arise by the
staggering construction of Definition~\ref{defn:stag}.)  These
$t$-structures depend on two auxiliary data: an \emph{$s$-structure}, and a
\emph{perversity function}.  After fixing notation, we briefly recall some
facts about these objects, and we then describe the $t$-structures
themselves.  We will also prove a few useful lemmas about these objects.

As before, let $X$ be a scheme of finite type over a noetherian base
scheme, acted on by an affine group scheme $G$ over the same base.  We
adopt the additional assumptions that the base scheme admits a dualizing
complex in the sense of~\cite[Chap.~V]{har}, and that the category $\cg X$
has enough locally free objects.  It follows
(see~\cite[Proposition~1]{bez:pcs}) that $X$ admits an equivariant
dualizing complex.  Fix one, and denote it $\omega_X \in \dgb X$.  Next,
let $\D = \cRHom(\cdot, \omega_X)$ denote the equivariant
Serre--Grothendieck duality functor.  Let $X^\gen$ denote the set of
generic points of $G$-invariant subschemes of $X$, and for any $x \in
X^\gen$, we denote by $\barGx$ the smallest $G$-stable closed subset of
$X$.  (We do not usually regard $\barGx$ as having a fixed subscheme
structure.)

For any point $x \in
X^\gen$ and any closed subscheme structure $i: Z \hto X$ on $\barGx$, there
is an open subscheme $V \subset Z$ such that $Ri^!\omega_X|_V$ is
concentrated in a single degree in $\dgb V$.  Let $\cod \barGx$ be the
unique integer such that $h^{\cod \barGx}(Ri^!\omega_X|_V) \ne 0$.  This
number is independent of the choice of closed subscheme structure $i: Z
\hto X$ and of open subscheme $V \subset Z$.  If $X$ is, say, an
equidimensional scheme of finite type over a field, $\omega_X$ may be
normalized so that $\cod \barGx$ is the ordinary (Krull) codimension of
$\barGx$.

An \emph{$s$-structure} on the scheme $X$ is a pair of collections of full
subcategories $(\{\cgl Xw\}, \{\cgg Xw\})_{w \in \Z}$ of $\cg X$ satisfying
a list of ten axioms, called (S1)--(S10) in~\cite{a}.  We will not review
all the axioms here, but we do recall some of the key properties of
$s$-structures:
\begin{itemize}
\item Each $\cgl Xw$ is a Serre subcategory, and each $\cgg Xw$ is closed
under extensions and subobjects.
\item $\cgg Xw$ is the right orthogonal to $\cgl X{w-1}$.
\item Each sheaf $\cF$ contains a unique maximal subsheaf in $\cgl Xw$,
denoted $\sigma_{\le w}\cF$.  The quotient $\sigma_{\ge w+1}\cF \cong
\cF/\sigma_{\le w}\cF$ is the largest quotient of $\cF$ in $\cgg X{w+1}$.
\item An $s$-structure on $X$ induces $s$-structures on all locally closed
subschemes of $X$.
\end{itemize}
Assume henceforth that $X$ is equipped with a fixed $s$-structure.  Given a
point $x \in X^\gen$ and a closed subscheme structure $i: Z \hto X$ on
$\barGx$, choose an open subscheme $V \subset Z$ such that
$Ri^!\omega_X|_V$
is concentrated in degree $\cod \barGx$.  There is a unique integer, called
the \emph{altitude} of $\barGx$ and denoted $\alt\barGx$, such that
\[
Ri^!\omega_X|_V[\cod\barGx] \in \cgl V{\alt\barGx} \cap \cgg V{\alt\barGx}.
\]
Again, $\alt\barGx$ is independent of the choice of $i$ and $V$.

The \emph{staggered codimension} of $\barGx$ is defined by
\[
\scod \barGx = \alt \barGx + \cod \barGx.
\]
A (\emph{staggered}) \emph{perversity function} is a function $p: X^\gen
\to \Z$ such that
\[
0 \le p(x) - p(y) \le \scod \barGx - \scod \barGy
\qquad\text{if $x \in \barGy$.}
\]
Given a perversity $p: X^\gen \to \Z$, the function $\bar p: X^\gen \to \Z$
given by
\[
\bar p (x) = \scod \barGx - p(x)
\]
is also a perversity function, known as the \emph{dual perversity}.  Given
a staggered perversity function $p$, we define a full subcategory of $\dgm
X$ by
\[
\p \dgml X0 = \left\{ \cF \,\Bigg| 
\begin{array}{c}
\text{for any $x \in X^\gen$, any closed subscheme structure} \\
\text{$i: Z\hto X$ on $\barGx$, and any $k \in \Z$, there is a dense open}
\\
\text{subscheme $V \subset Z$ such that $h^k(Li^*\cF)|_V \in \cgl
V{p(x)-k}$}
\end{array} \right\},
\]
and a full subcategory of $\dgp X$ by
\[
\p \dgpg X0 = \D(\barp \dgml X0).
\]
The $t$-structure associated in~\cite{a} to the given $s$-structure and to
a perversity $p$ is the pair $(\p\dgbl X0, \p\dgbg X0)$, where
\[
\p\dgbl X0 = \p\dgml X0 \cap \dgb X
\qquad\text{and}\qquad
\p\dgbg X0 = \p\dgpg X0 \cap \dgb X.
\]

The remainder of the section will be spent establishing a number of useful
lemmas about these objects.  Let $q: X^\gen \to \Z$ be a function such that
\begin{equation}\label{eqn:g-const}
q(x) = q(y)
\qquad\text{whenever}\qquad
\barGx = \barGy.
\end{equation}
Given such a function, let
\[
\q\cgl Xw = \left\{ \cF \in \cg X \,\Bigg|
\begin{array}{c}
\text{for any closed subscheme $i: \barGx \hto X$ with} \\
\text{$x \in X^\gen$, there is a dense open subscheme} \\
\text{$V \subset \barGx$ such that $i^*\cF|_V \in 
\cgl V{w+q(x)}$}
\end{array} \right\}.
\]
One may either regard this definition as a condition only on
\emph{reduced} closed subschemes of the form $\barGx$, or as a condition
on all possible closed subscheme structures on the various closed sets
$\barGx$.  These two interpretations are equivalent
by~\cite[Proposition~4.1]{a}, however, so there is no ambiguity in the
definition.  The first viewpoint is more convenient for checking explicit
examples, but the second is sometimes more useful in proofs. 

\begin{lem}\label{lem:qcgl-res}
Let $x \in X^\gen$, and let $i: Z \hto X$ be a closed subscheme structure
on $\barGx$.  For any sheaf $\cF \in \q\cgl Xw$ and any $r \ge 0$, there is
a dense open subscheme $V \subset Z$ such that $h^{-r}(Li^*\cF)|_V \in \cgl
V{w+q(x)}$.
\end{lem}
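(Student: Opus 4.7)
My plan is to proceed by induction on $r$. The case $r = 0$ is immediate: $h^0(Li^*\cF) = i^*\cF$, and the condition that $i^*\cF|_V \in \cgl V{w+q(x)}$ for some dense open $V \subset Z$ is exactly the defining property of $\q\cgl Xw$.

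For the inductive step, I use the standing assumption that $\cg X$ has enough locally free objects: choose a short exact sequence
\[
0 \to \cK \to \cP \to \cF \to 0
\]
with $\cP$ locally free (and, with additional care, itself lying in $\q\cgl Xw$). Because $Li^*\cP \cong i^*\cP$ is concentrated in degree zero, the long exact sequence obtained by applying $Li^*$ collapses to the isomorphisms
\[
h^{-r}(Li^*\cF) \cong h^{-(r-1)}(Li^*\cK) \qquad \text{for } r \geq 2,
\]
together with the four-term exact sequence
\[
0 \to h^{-1}(Li^*\cF) \to i^*\cK \to i^*\cP \to i^*\cF \to 0
\]
in the $r = 1$ case. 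Granting that $\cK$ also belongs to $\q\cgl Xw$, the $r \geq 2$ cases follow from the inductive hypothesis applied to $\cK$ at index $r - 1$; for $r = 1$, the conclusion then follows because $h^{-1}(Li^*\cF)$ embeds into $i^*\cK$ and $\cgl V{w + q(x)}$ is closed under subobjects (being a Serre subcategory).

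The main obstacle is verifying that $\cK$ lies in $\q\cgl Xw$, which amounts to closure of $\q\cgl Xw$ under the subsheaves arising as kernels of locally free covers inside it. The subtlety is that the failure of $i^*\cK \to i^*\cP$ to be injective is measured precisely by $h^{-1}(Li^*\cF)$, the very Tor sheaf the lemma is meant to control, creating an apparent circularity. I would address this by a simultaneous induction on $r$, proving jointly the statement of the lemma at level $r$ together with the subsheaf-closure assertion for $\q\cgl Xw$ at the same level. The Serre axioms of $\cgl V{\bullet}$, combined with the defining axioms (S1)--(S10) of an $s$-structure which ensure that the filtration interacts well with locally free objects and with restriction to dense open subschemes, should allow the joint induction to close at each step: at the base one extracts $\cK \in \q\cgl Xw$ from $\cP$ and $\cF$ via extension-closure and subobject-closure inside the Serre subcategory, and the inductive step then propagates both assertions to higher $r$.
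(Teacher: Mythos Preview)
Your approach has a genuine gap at the step you flag as requiring ``additional care'': the existence of a locally free surjection $\cP \twoheadrightarrow \cF$ with $\cP \in \q\cgl Xw$. The standing hypothesis that $\cg X$ has enough locally free objects says nothing about how such objects interact with the $s$-structure, and none of the axioms (S1)--(S10) guarantee that each $\q\cgl Xw$ (or even each $\cgl Xw$) contains enough locally frees. Without this, the induction never gets started: you cannot conclude $\cK \in \q\cgl Xw$, since after restriction the image of $\kappa^*\cK \to \kappa^*\cP$ is a subobject of $\kappa^*\cP$, and you need $\kappa^*\cP|_V \in \cgl V{w+q(y)}$ to use subobject-closure of $\cgl V{\bullet}$. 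The simultaneous induction you propose does not touch this issue, because the problem is not the circularity between the lemma and closure of $\q\cgl Xw$ under kernels; it is that the input $\cP \in \q\cgl Xw$ is simply unavailable. (Note also that you invoke the Serre property of $\q\cgl Xw$, but in the paper that is Lemma~\ref{lem:qcgl-serre}, proved \emph{after} and \emph{using} the present lemma.)

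The paper's proof avoids resolutions entirely. It instead invokes Axiom~(S10), which asserts directly that for $\cF \in \cgl X{w+q(x)}$ and $\cG \in \cgg Z{w+q(x)+1}$, the groups $\Ext^r(\cF, i_*\cG)$ vanish after passing to a dense open. Combined with the truncation triangle for $\tau^{\ge -r}Li^*\cF$ and an auxiliary vanishing from~\cite[Lemma~8.1]{a}, this yields $\Hom(h^{-r}(Li^*\cF)|_V, \cG|_V) = 0$ for all such $\cG$, hence $h^{-r}(Li^*\cF)|_V \in \cgl V{w+q(x)}$. Axiom~(S10) is exactly the substitute for the missing control over locally free resolutions; your proposal would effectively require something stronger than (S10) that is not assumed.
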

\begin{proof}
The proof of this lemma follows that of~\cite[Lemma~8.2]{a} nearly
verbatim.  By the definition of $\q\cgl Xw$, we know that there is a dense
open subset $Z'\subset Z$ such that $i^*\cF|_{Z'} \in \cgl {Z'}{w+q(x)}$. 
Let $X' = X \ssm (Z' \ssm Z)$.  Then $X'$ is a dense open subset of $X$,
and $i: Z' \hto X$ is a closed subscheme of $X'$.  It clearly suffices to
prove the lemma in the case where $X$ and $Z$ are replaced by $X'$ and
$Z'$.  We therefore henceforth assume, without loss of generality, that
$i^*\cF \in \cgl Z{w+q(x)}$.

We now proceed by induction on $r$.  For $r = 0$, the lemma is trivial: we
have $i^*\cF \in \cgl Z{w+q(x)}$ by assumption.  Now, suppose $r > 0$. 
According to Axiom~(S10) in the definition of an $s$-structure~\cite{a},
there is an open subscheme $V' \subset Z$ such that for any open set $U
\subset X$ with $U \cap Z \subset V'$, we have $\Ext^r(\cF|_U, i_*\cG|_U) =
0$ for all $\cG \in \cgg Z{w+q(x)+1}$.  (In fact, Axiom~(S10) guarantees
this vanishing for all $\cG$ in a slightly larger category, denoted $\tcgg
Z{w+q(x)+1}$, but we will not require that additional information.) 
Equivalently, for any open $V \subset V'$, we have $\Hom(Li^*\cF|_V,
\cG[r]|_V) = 0$ for all $\cG \in \cgg Z{w+q(x)+1}$.  We also have
$\Hom(Li^*\cF|_V, \cG[r]|_V) \cong \Hom(\tau^{\ge -r}Li^*\cF|_V,
\cG[r]|_V)$, and then from the distinguished triangle
\[
\tau^{\le -r}\tau^{\ge -r}Li^*\cF \to \tau^{\ge -r}Li^*\cF \to \tau^{\ge
-r+1}Li^*\cF \to
\]
we obtain the exact sequence
\begin{multline*}
\cdots \to \Hom(\tau^{\ge -r}Li^*\cF|_V, \cG[r]|_V) \to 
\Hom(\tau^{\le -r}\tau^{\ge -r}Li^*\cF|_V, \cG[r]|_V) \to \\
\Hom(\tau^{\ge -r+1}Li^*\cF[-1]|_V, \cG[r]|_V) \to \cdots.
\end{multline*}
Since $\tau^{\le -r}\tau^{\ge -r}Li^*\cF \cong h^{-r}(Li^*\cF)[r]$, the
sequence above can be rewritten as 
\begin{multline*}
\cdots \to \Hom(Li^*\cF|_V, \cG[r]|_V) \to 
\Hom(h^{-r}(Li^*\cF)|_V, \cG|_V) \to \\
\Hom(\tau^{\ge -(r-1)}Li^*\cF|_V, \cG[r+1]|_V) \to \cdots.
\end{multline*}
The first term above vanishes.  Note that 
\[
h^k(\tau^{\ge -(r-1)}Li^*\cF) \cong 
\begin{cases}
h^k(Li^*\cF) & \text{if $-(r-1) \le k \le 0$,} \\
0 & \text{otherwise.}
\end{cases}
\]
Thus, by the inductive assumption, the cohomology sheaves of $\tau^{\ge
-(r-1)}Li^*\cF$ have the property that for each $k$, there is a dense open
subscheme $V_k \subset Z$ such that $h^k(\tau^{\ge -(r-1)}Li^*\cF)|_{V_k}
\in \cgl {V_k}{w+q(x)}$.  This property is precisely the hypothesis
of~\cite[Lemma~8.1]{a}, which then tells us that there is a dense open
subscheme $V'' \subset Z$ such that the last term in the exact sequence
above vanishes whenever $V \subset V''$. In particular, let us take $V = V'
\cap V''$.  The middle term above then clearly vanishes.  Since
$\Hom(h^{-r}(Li^*\cF)|_V, \cG_1) = 0$ for all $\cG_1 \in \cgl V{w+q(x)+1}$,
we have $h^{-r}(Li^*\cF)|_V \in \cgl V{w+q(x)}$, as desired.
\end{proof}

\begin{lem}\label{lem:qcgl-serre}
$\q\cgl Xw$ is a Serre subcategory of $\cg X$.
\end{lem}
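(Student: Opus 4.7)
The plan is to verify directly the three Serre-subcategory axioms (closure under extensions, quotients, and subobjects). Fix a short exact sequence
$$0 \to \cF' \to \cF \to \cF'' \to 0$$
in $\cg X$, a point $x \in X^\gen$, and a closed subscheme structure $i: Z \hto X$ on $\barGx$; in each case the goal will be to produce a dense open $V \subset Z$ witnessing the required membership condition. Throughout, I will freely use the fact, already available from the theory of $s$-structures, that $\cgl V{w+q(x)}$ is itself a Serre subcategory of $\cg V$.

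For extension closure, suppose $\cF', \cF'' \in \q\cgl Xw$ and choose dense opens $V', V'' \subset Z$ witnessing the conditions for $\cF'$ and $\cF''$. Restricting to $V = V' \cap V''$, the right-exact sequence $i^*\cF'|_V \to i^*\cF|_V \to i^*\cF''|_V \to 0$ presents $i^*\cF|_V$ as an extension of $i^*\cF''|_V$ by the image of $i^*\cF'|_V$; both of these lie in $\cgl V{w+q(x)}$ by the Serre property, giving $\cF \in \q\cgl Xw$. Quotient closure is even faster: if $\cF \in \q\cgl Xw$, the surjection $i^*\cF|_V \twoheadrightarrow i^*\cF''|_V$ immediately places $i^*\cF''|_V$ in $\cgl V{w+q(x)}$.

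The substantive case is closure under subobjects, and the main obstacle is precisely the failure of $i^*$ to be left exact. My plan is to promote the ordinary pullback to the derived pullback and invoke Lemma~\ref{lem:qcgl-res}. Assuming $\cF \in \q\cgl Xw$, the preceding step already gives $\cF'' \in \q\cgl Xw$, so Lemma~\ref{lem:qcgl-res} (applied with $r=1$) supplies a dense open $W \subset Z$ on which $h^{-1}(Li^*\cF'')|_W \in \cgl W{w+q(x)}$. Choosing also $V \subset Z$ with $i^*\cF|_V \in \cgl V{w+q(x)}$ and setting $V_0 = V \cap W$, the four-term exact sequence
$$h^{-1}(Li^*\cF'')|_{V_0} \to i^*\cF'|_{V_0} \to i^*\cF|_{V_0} \to i^*\cF''|_{V_0} \to 0$$
arising from the distinguished triangle $Li^*\cF' \to Li^*\cF \to Li^*\cF'' \to$ exhibits $i^*\cF'|_{V_0}$ as an extension of a subsheaf of $i^*\cF|_{V_0}$ (the image) by a quotient of $h^{-1}(Li^*\cF'')|_{V_0}$ (the kernel). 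Both pieces lie in $\cgl {V_0}{w+q(x)}$ by the Serre property, and one further application of that property lands $i^*\cF'|_{V_0}$ there as well, completing the argument.
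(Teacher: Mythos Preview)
Your proof is correct and follows essentially the same approach as the paper: both reduce everything to the Serre property of $\cgl V{w+q(x)}$, and both handle the crucial subobject case via the four-term exact sequence $h^{-1}(Li^*\cF'') \to i^*\cF' \to i^*\cF \to i^*\cF'' \to 0$ together with Lemma~\ref{lem:qcgl-res} (with $r=1$) to control the leftmost term on a dense open. Your presentation is slightly more explicit in decomposing $i^*\cF'|_{V_0}$ as an extension, but the argument is the same.
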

\begin{proof}
Suppose we have a short exact sequence $0 \to \cF' \to \cF \to \cF'' \to 0$
in $\cg X$.  Given $x \in X^\gen$ and a closed subscheme structure $i: Z
\hto X$ on $\barGx$, consider the exact sequence
\[
h^{-1}(Li^*\cF'') \to i^*\cF' \to i^*\cF \to i^*\cF'' \to 0.
\]
Suppose $\cF'$ and $\cF''$ are in $\q\cgl Xw$.  Then there are dense open
subschemes $V', V'' \subset Z$ such that $i^*\cF'|_{V'} \in \cgl
{V'}{w+q(x)}$ and $i^*\cF''|_{V''} \in \cgl {V''}{w+q(x)}$.  Let $V = V'
\cap V''$.  Then, since $\cgl V{w+q(x)}$ is a Serre subcategory of $\cg V$,
we see that $i^*\cF|_V \in \cgl V{w+q(x)}$, so $\cF \in \q\cgl Xw$.

Conversely, if $\cF \in \q\cgl Xw$, then there is a dense open subscheme $V
\subset Z$ such that $i^*\cF|_V \in \cgl V{w+q(x)}$.  It follows that
$i^*\cF''|_V \in \cgl V{w+q(x)}$ as well, so $\cF'' \in \q\cgl Xw$.  Next,
by Lemma~\ref{lem:qcgl-res}, there is some dense open subscheme $V' \subset
Z$ such that $h^{-1}(Li^*\cF'')|_{V'} \in \cgl {V'}{w+q(x)}$, and it
follows that $i^*\cF'|_{V \cap V'} \in \cgl{V \cap V'}{w+q(x)}$.  Thus,
$\cF' \in \q\cgl Xw$ as well.
\end{proof}

Next, let $p$ be a staggered perversity function.  The following alternate
characterization of $\p\dgml X0$ will be useful.

\begin{lem}\label{lem:dgml}
We have
\[
\p\dgml X0 = \{ \cF \in \dgm X \mid \text{$h^k(\cF) \in \pl\cgl
X{-k}$ for all $k \in \Z$} \}.
\]
\end{lem}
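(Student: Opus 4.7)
The plan is to establish the two inclusions separately, with the hypercohomology spectral sequence
\[
E_2^{p,q} = L^{-p}i^*\bigl(h^q(\cF)\bigr) \Longrightarrow h^{p+q}(Li^*\cF)
\]
as the central tool in both directions. This converges for $\cF \in \dgm X$ because for each abutment degree $n$ the indices satisfying $E_2^{p,q}\ne 0$ lie in $n \le q \le M$, where $M$ is the top cohomological degree of $\cF$; so only finitely many $q$'s contribute.

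For the inclusion ``$\supseteq$,'' assume $h^q(\cF) \in \pl\cgl X{-q}$ for every $q$. Fix $x \in X^\gen$, a closed subscheme structure $i:Z \hookrightarrow X$ on $\barGx$, and $k \in \Z$. The contributions to $h^k(Li^*\cF)$ are the finitely many nonzero terms $E_2^{-r,k+r} = L^r i^*\bigl(h^{k+r}(\cF)\bigr)$, $r \ge 0$. Applying Lemma~\ref{lem:qcgl-res} (with $q=p$ and $w = -(k+r)$) to $h^{k+r}(\cF) \in \pl\cgl X{-(k+r)}$ supplies a dense open $V_r \subset Z$ with $L^r i^*\bigl(h^{k+r}(\cF)\bigr)|_{V_r} \in \cgl{V_r}{p(x)-k-r} \subset \cgl{V_r}{p(x)-k}$. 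On $V = \bigcap_r V_r$, Lemma~\ref{lem:qcgl-serre} (asserting that $\cgl V{p(x)-k}$ is a Serre subcategory) propagates this bound through the filtration on $h^k(Li^*\cF)|_V$, giving $h^k(Li^*\cF)|_V \in \cgl V{p(x)-k}$ and hence $\cF \in \p\dgml X0$.

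For the reverse inclusion ``$\subseteq$,'' I would proceed by downward induction on $k$; the base case is trivial since $\cF$ is bounded above. Assuming $h^j(\cF) \in \pl\cgl X{-j}$ for all $j > k$, the inclusion just proved gives $\tau^{\ge k+1}\cF \in \p\dgml X0$. Fixing $x$ and $i:Z \hookrightarrow X$ on $\barGx$, I apply $Li^*$ to the truncation triangle $\tau^{\le k}\cF \to \cF \to \tau^{\ge k+1}\cF \to$ and extract the degree-$k$ long exact sequence. Right $t$-exactness of $Li^*$ gives $h^{k+1}(Li^*\tau^{\le k}\cF) = 0$, and the auxiliary triangle $\tau^{\le k-1}\cF \to \tau^{\le k}\cF \to h^k(\cF)[-k] \to$ identifies $h^k(Li^*\tau^{\le k}\cF)$ with $i^*h^k(\cF)$, producing
\[
h^{k-1}(Li^*\tau^{\ge k+1}\cF) \to i^*h^k(\cF) \to h^k(Li^*\cF) \to h^k(Li^*\tau^{\ge k+1}\cF) \to 0.
\]

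The main obstacle is that the naive bound from $\tau^{\ge k+1}\cF \in \p\dgml X0$ supplies only $h^{k-1}(Li^*\tau^{\ge k+1}\cF)|_V \in \cgl V{p(x)-k+1}$, which is one unit too weak to conclude from the exact sequence that $i^*h^k(\cF)|_V \in \cgl V{p(x)-k}$. I would sharpen the estimate by reapplying the spectral sequence to $\tau^{\ge k+1}\cF$ at abutment degree $k-1$: the contributions $L^r i^*\bigl(h^{k-1+r}(\tau^{\ge k+1}\cF)\bigr)$ vanish unless $k-1+r \ge k+1$, i.e.\ $r \ge 2$, and for such $r$ the inductive hypothesis combined with Lemma~\ref{lem:qcgl-res} places each one generically in $\cgl V{p(x)-k+1-r} \subset \cgl V{p(x)-k-1}$. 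Intersecting the finitely many resulting dense opens with those from $\cF$ and $\tau^{\ge k+1}\cF$ and invoking once more the Serre property of $\cgl V{p(x)-k}$, the exact sequence then forces $i^*h^k(\cF)|_V \in \cgl V{p(x)-k}$, closing the induction.
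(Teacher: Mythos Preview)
Your argument is correct, and it takes a genuinely different route from the paper's proof. For the inclusion ``$\supseteq$'' the paper treats separately the cases where $\cF$ is concentrated in one degree, then bounded, then unbounded (via $h^k(Li^*\cF)\cong h^k(Li^*\tau^{\ge k}\cF)$); your direct use of the hypercohomology spectral sequence collapses these three steps into one. For ``$\subseteq$'' the paper carries an auxiliary hypothesis through the downward induction, namely that $\tau^{\le k-1}\cF\in\p\dgml X0$, and works with the triangle $\tau^{\le k-1}\cF\to\tau^{\le k}\cF\to h^k(\cF)[-k]$; this lets it bound $h^r(Li^*\tau^{\le k-1}\cF)$ directly from the definition of $\p\dgml X0$ together with a single application of Lemma~\ref{lem:qcgl-res} to the first term of the resulting exact sequence. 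You instead use the other truncation triangle and compensate by sharpening the bound on $h^{k-1}(Li^*\tau^{\ge k+1}\cF)$ via the spectral sequence, exploiting the observation that the contributing $E_2$ terms start only at $r\ge 2$. Your approach is more uniform (one tool used twice) and avoids tracking the auxiliary claim; the paper's approach is spectral-sequence-free and, as a byproduct of maintaining $\tau^{\le k-1}\cF\in\p\dgml X0$, immediately yields Corollary~\ref{cor:stag-trunc} on stability under standard truncation.
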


\begin{rmk}
Note the similarity between the right-hand side of this equation and the
definition of $\sD^{\leq 0}$ of definition \ref{defn:stag}.
\end{rmk}

\begin{proof}
Throughout the proof, $x$ will denote a point of $X^\gen$, and $i: Z \hto
X$ will denote a closed subscheme structure on $\barGx$.

First, suppose $\cF$ is concentrated in a single degree with respect to the
standard $t$-structure, say in degree $n$, and that $h^n(\cF) \in \pl\cgl
X{-n}$.  If $k > n$, then of course $h^k(Li^*\cF) = 0$.  If $k \le n$, then
by Lemma~\ref{lem:qcgl-res}, there is a dense open subscheme $V \subset Z$
such that $h^k(Li^*\cF)|_V \in \cgl X{p(x)-n} \subset \cgl X{p(x)-k}$, so
$\cF \in \p\dgml X0$.

Next, if $\cF \in \dgb X$ and $h^k(\cF) \in \pl\cgl X{-k}$ for all $k$, it
follows that $\cF \in \p\dgml X0$ by the preceding paragraph and a standard
induction argument on the number of nonzero cohomology sheaves of $\cF$. 
Finally, suppose that $\cF \in \dgm X$ and that $h^k(\cF) \in \pl\cgl
X{-k}$ for all $k$.  For any $k \in \Z$, $\tau^{\ge k}\cF$
is in $\dgb X$, so we already know that $\tau^{\ge k}\cF \in \p\dgml X0$. 
But consideration of the distinguished triangle
\[
Li^*\tau^{\le k-1}\cF \to Li^*\cF \to Li^*\tau^{\ge k}\cF \to
\]
shows that $h^k(Li^*\cF) \cong h^k(Li^*\tau^{\ge k}\cF)$, so in particular,
there is a dense open subscheme $V \subset Z$ with $h^k(Li^*\cF)|_V \in
\cgl V{p(x)-k}$, so $\cF \in \p\dgml X0$, as desired.

Conversely, suppose $\cF \in \p\dgml X0$.  Let $a$ be the largest integer
such that $h^a(\cF) \ne 0$.  Then of course $h^a(Li^*\cF) \cong
h^a(Li^*\tau^{\ge a}\cF) \cong i^*h^a(\cF)$, and we know that there is a
dense open subscheme $V \subset Z$ such that $i^*h^a(\cF)|_V \in \cgl
V{p(x)-a}$, so $h^a(\cF) \in \pl\cgl X{-a}$.  

Now, we will prove by downward induction on $k$ that $h^k(\cF) \in \pl\cgl
X{-k}$ and that $\tau^{\le k-1}\cF \in \p\dgml X0$ for all $k$.  These
statements hold trivially if $k > a$.  Suppose we know that $h^{k+1}(\cF)
\in \pl\cgl X{-k-1}$ and $\tau^{\le k}\cF \in \p\dgml X0$.  By the
preceding paragraph, we know that $h^k(\cF) = h^k(\tau^{\le k}\cF) \in
\pl\cgl X{-k}$.  Next, from the distinguished triangle $\tau^{\le k-1}\cF
\to \tau^{\le k}\cF \to \tau^{[k,k]}\cF \to$, we obtain the exact sequence
\[
h^{r-1}(Li^*\tau^{[k,k]}\cF) \to h^r(Li^*\tau^{\le k-1}\cF) \to
h^r(Li^*\tau^{\le k}\cF).
\]
Assume $r \le k-1$ (otherwise, the middle term above vanishes).  By
Lemma~\ref{lem:qcgl-res}, for some dense open $V \subset Z$,
$h^{r-1}(Li^*\tau^{[k,k]}\cF)|_V \in \cgl V{p(x)-k} \subset \cgl
V{p(x)-r}$.  Replacing $V$ by a smaller open subscheme if necessary, we may
also assume that $h^r(Li^*\cF)|_V \in \cgl V{p(x)-r}$.  It follows that
$h^r(Li^*\tau^{\le k-1}\cF)|_V \in \cgl V{p(x)-r}$, so $\tau^{\le k-1}\cF
\in \p\dgml X0$.

In particular, $h^k(\cF) \in \pl\cgl X{-k}$ for all $k$, as desired.
\end{proof}

In the course of the preceding proof, we have also established the
following statement.

\begin{cor}\label{cor:stag-trunc}
The category $\p\dgml X0$ is stable under all standard truncation functions
$\tau^{\le k}$ and $\tau^{\ge k}$.\qed
\end{cor}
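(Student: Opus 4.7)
The plan is to read off the corollary directly from the cohomological characterization established in Lemma~\ref{lem:dgml}. By that lemma, an object $\cF \in \dgm X$ lies in $\p\dgml X0$ if and only if $h^k(\cF) \in \pl\cgl X{-k}$ for every $k \in \Z$. So the only thing to verify is that the cohomology sheaves of $\tau^{\le k}\cF$ and $\tau^{\ge k}\cF$ satisfy this same membership condition whenever those of $\cF$ do.

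This is purely formal. For any $j, k \in \Z$ we have
\[
h^j(\tau^{\le k}\cF) = \begin{cases} h^j(\cF) & \text{if } j \le k, \\ 0 & \text{if } j > k, \end{cases}
\qquad
h^j(\tau^{\ge k}\cF) = \begin{cases} h^j(\cF) & \text{if } j \ge k, \\ 0 & \text{if } j < k. \end{cases}
\]
In either case, $h^j$ of the truncation is either $h^j(\cF)$, which lies in $\pl\cgl X{-j}$ by hypothesis, or the zero sheaf, which lies in $\pl\cgl X{-j}$ because $\pl\cgl X{-j}$ is a Serre subcategory by Lemma~\ref{lem:qcgl-serre} (and hence in particular contains $0$). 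Applying Lemma~\ref{lem:dgml} in the reverse direction yields $\tau^{\le k}\cF, \tau^{\ge k}\cF \in \p\dgml X0$.

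There is really no obstacle here: the only substantive work was done in proving Lemma~\ref{lem:dgml}, whose downward induction was what actually extracted the pointwise cohomological condition from the a priori weaker definition involving $Li^*$ on closed subschemes. The remark in the excerpt that ``in the course of the preceding proof we have also established'' this corollary is simply pointing out that stability under $\tau^{\le k-1}$ was an explicit step of that induction, while stability under $\tau^{\ge k}$ then follows from the distinguished triangle $\tau^{\le k-1}\cF \to \cF \to \tau^{\ge k}\cF \to$ together with the cohomological criterion. The one-line argument above via Lemma~\ref{lem:dgml} is the cleanest way to record the conclusion.
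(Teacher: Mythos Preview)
Your argument is correct and is exactly the approach the paper has in mind: the corollary is recorded with a \qed\ and the remark that it was established in the course of proving Lemma~\ref{lem:dgml}, and your use of the cohomological characterization from that lemma is the clean way to extract it. The only content is that truncation kills some cohomology sheaves and leaves the others unchanged, which you state precisely.
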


\section{Baric Structures on Coherent Sheaves, II}
\label{sect:baric-coh2}

In this section, we achieve the main goal of the paper: the construction of
a class of baric structures on derived categories of equivariant coherent
sheaves.  These baric structures depend on a function on $X^\gen$ that
plays a role analogous to that played by a staggered perversity in
Section~\ref{sect:stagt}.

\begin{defn}\label{defn:rec}
Suppose $G$ acts on $X$ with finitely many orbits.  For each orbit $C
\subset X$, let $\cI_C \subset \cO_X$ denote the ideal sheaf corresponding
to the reduced closed subscheme structure on $\overline C \subset X$.  An
$s$-structure on $X$ is said to be \emph{recessed} if for each $C$,
$\cI_C/\cI_C^2 \in \cgl X{-1}$.
\end{defn}

For the remainder of the paper, we assume that $G$ acts on $X$ with
finitely many orbits, and that $X$ is endowed with a recessed
$s$-structure.  (See Remarks~\ref{rmk:ineq} and~\ref{rmk:ineq2}, however.) 
The assumption that
the $s$-structure is recessed is a mild one:  ``most'' of the
$s$-structures appearing in~\cite{t} are recessed, as is the one used
in~\cite{as}.  

Note that $\cI_C/\cI_C^2$ is always at least in $\cgl X0$, since it is a
subquotient of $\cO_X \in \cgl X0$.  In addition, since the coherent
pullback functor to a locally closed subscheme is right $s$-exact, it
follows that the restriction of a recessed $s$-structure to any locally
closed subscheme is also recessed.

\begin{rmk}
It is certainly possible to define the notion of ``recessed $s$-structure''
in a way that does not assume finiteness of the number of orbits.  (One
simply imposes a condition on the ideal sheaf of $\barGx$ for every $x \in
X^\gen$, not just for every orbit closure.)  However, it seems likely that
when there are infinitely many orbits, there are no recessed
$s$-structures.
\end{rmk}

Given a function $q: X^\gen \to \Z$ satisfying~\eqref{eqn:g-const}, define a a new function $\dualq:
X^\gen \to \Z$ given by
\[
\dualq(x) = \alt \barGx - q(x).
\]
Note that when $G$ acts on $X$ with finitely many orbits, a function $q: X^\gen \to \Z$ satisfying~\eqref{eqn:g-const} may be regarded as a $\Z$-valued function on the set of orbits.  It will sometimes be convenient to adopt this point of view, and, given an orbit $C \subset X$, we sometimes write
\[
q(C) = q(x_C)
\qquad\text{where $x_C \in X^\gen$ is any generic point of $C$}.
\]

\begin{lem}\label{lem:q-ext}
Let $\cG \in \cg X$, and let $j: U \hto X$ be an open subscheme.  Suppose
$\cF_1 \subset \cG|_U$ is such that $\cF_1 \in \q\cgl Uw$.  Then there
exists a subsheaf $\cF \subset \cG$ such that $\cF|_U \cong \cF_1$ and $\cF
\in \q\cgl Xw$.
\end{lem}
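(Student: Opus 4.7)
My plan has two stages, with the recessed condition entering essentially at the end.

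First, I would produce some coherent subsheaf of $\cG$ restricting to $\cF_1$, ignoring the $\q\cgl$ condition entirely. Consider the preimage of $j_{*}\cF_1 \subset j_{*}(\cG|_U)$ under the unit map $\cG \to j_{*}(\cG|_U)$: this preimage is quasicoherent, restricts to $\cF_1$ on $U$, and by noetherianness is the directed union of its coherent subsheaves. Adjusting one of these (if necessary by enlarging it with a coherent extension of $\cF_1$ to ensure the restriction to $U$ is surjective onto $\cF_1$) produces a coherent $\cF_0 \subset \cG$ with $\cF_0|_U \cong \cF_1$.

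Second, I would adjust $\cF_0$ so that it lies in $\q\cgl Xw$, inducting on the number of $G$-orbits in $Z := X \ssm U$. The base case $Z = \varnothing$ is trivial. In the inductive step, $Z$ is $G$-stable and closed in $X$, so since $G$ acts with finitely many orbits, $Z$ contains a closed $G$-orbit $C$ of $X$. Setting $U_1 := X \ssm C$ and applying the inductive hypothesis to the pair $U \subset U_1$ yields $\cH \subset \cG|_{U_1}$ with $\cH|_U \cong \cF_1$ and $\cH \in \q\cgl{U_1}w$; it then suffices to extend $\cH$ across the single closed orbit $C$.

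For this remaining step, take any coherent extension $\cF_0 \subset \cG$ of $\cH$ as in the first stage; I claim $\cI_C^n \cF_0 \in \q\cgl Xw$ for $n$ sufficiently large. Since $\cI_C|_{U_1} = \cO_{U_1}$, the restriction $(\cI_C^n \cF_0)|_{U_1} = \cH$ is unchanged, so the $\q\cgl Xw$ condition at $x \in X^\gen$ with $\overline{Gx} \not\subset C$ (for which $\overline{Gx} \cap U_1$ is dense in $\overline{Gx}$ by $G$-irreducibility) is inherited from $\cH$. For $x$ the generic point of a subscheme structure on $C$ (where $q(x) = q(C)$), I would iterate the multiplication surjection
\[
(\cI_C/\cI_C^2) \otimes_{\cO_C} i^*(\cI_C^{k-1} \cF_0) \twoheadrightarrow i^*(\cI_C^k \cF_0),
\]
obtained from $\cI_C \otimes_{\cO_X} \cI_C^{k-1}\cF_0 \twoheadrightarrow \cI_C^k\cF_0$ by applying $\otimes_{\cO_X}\cO_C$. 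Starting from some integer $m$ with $i^*\cF_0 \in \cgl Vm$ on a dense open $V \subset C$ and using the recessed hypothesis $\cI_C/\cI_C^2 \in \cgl X{-1}$, iterating $n$ times drops the generic $s$-filtration level of $i^*(\cI_C^n \cF_0)$ to $m - n$, which is at most $w + q(C)$ once $n$ is large.

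The main obstacle will be the final step: extracting from the recessed condition, via the tensor-exactness axioms of an $s$-structure from~\cite{a}, the claim that tensoring with $\cI_C/\cI_C^2 \in \cgl C{-1}$ drops the $s$-filtration level by one on a suitable dense open of $C$. The key geometric input is that $C$ is a single $G$-orbit, hence smooth, so $\cI_C/\cI_C^2$ is locally free on a dense open of $C$, and the recessed hypothesis supplies the $s$-filtration bound.
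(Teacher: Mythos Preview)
Your proposal is correct and follows essentially the same approach as the paper's proof. The paper inducts by picking an \emph{open} orbit $C$ in $\overline U \ssm U$ and extending one step to $V = U \cup C$, whereas you induct by picking a \emph{closed} orbit $C$ in $X \ssm U$ and first handling $U_1 = X \ssm C$ inductively; these are dual variations of the same idea, and both reduce to twisting by a power of $\cI_C$ to drop the $s$-level on $C$. Your anticipated ``main obstacle'' is not one: the paper dispatches it in one line using the multiplicativity axiom~(S6) from~\cite{a}, which says directly that $\kappa^*\cI_C \in \cgl C{-1}$ implies $\kappa^*\cI_C^{\otimes n} \otimes \kappa^*\cF_0 \in \cgl C{m-n}$, with no need to invoke local freeness of $\cI_C/\cI_C^2$ or to pass to a dense open of $C$.
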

\begin{proof}
If $U$ is closed ({\it i.e.}, if $U$ is a connected component of $X$), then
$j_*\cF_1$ is naturally a subsheaf of $\cG$, so we simply take $\cF \cong
j_*\cF_1$.   Otherwise, let $C$ be an open orbit in $\overline U \ssm U$,
and let $V$ be the open subscheme $U \cup C$.  By induction on the number
of orbits in $\overline U \ssm U$, it suffices to find $\cF \subset \cG|_V$
such that $\cF \in \q\cgl Vw$ and $\cF|_U \cong \cF$.  Let $\kappa: C \hto
V$ be the inclusion map, and let $\cI_C$ be the ideal sheaf of $C$ in $V$. 
Finally, let $\cF'$ be some subsheaf of $\cG|_V$ such that $\cF'|_U \cong
\cF_1$.  Suppose $\kappa^*\cF' \in \cgl Cv$.  If $v \le w+q(C)$, we may
take $\cF = \cF'$, and we are finished.  On the other hand, if $v >
w+q(C)$, let $\cF = \cI_C^{v-w-q(C)}\cF'$.  Since $\cI_C|_U \cong \cO_U$,
we clearly still have $\cF|_U \cong \cF_1$.  The fact that the
$s$-structure is recessed means that $\kappa^*\cI_C \in \cgl C{-1}$, so
$\kappa^*\cI_C^{\otimes v-w-q(C)} \in \cgl C{-v+w+q(C)}$, and therefore
$\kappa^*\cI_C^{\otimes v-w-q(C)} \otimes \kappa^*\cF' \in \cgl C{w+q(C)}$.
 Now, $\kappa^*\cF$ is a quotient of $\kappa^*\cI_C^{\otimes v-w-q(C)}
\otimes \kappa^*\cF'$, so $\kappa^*\cF \in \cgl C{w+q(C)}$, as desired.
\end{proof}

Given a function $q: X^\gen \to \Z$, we define a full subcategory of $\dgm
X$ by
\[
\q\dsml Xw = \{ \cF \in \dgm X \mid \text{$h^k(\cF) \in \q\cgl Xw$} \}.
\]
We also define a full subcategory of $\dgp X$ by
\[
\q\dspg Xw = \D(\cheq\dsml X{-w}).
\]
Finally, we put
\[
\q\dsl Xw = \q\dsml Xw \cap \dgb X
\quad\text{and}\quad
\q\dsg Xw = \q\dspg Xw \cap \dgb X.
\]
The main result of the paper is the following.

\begin{thm}\label{thm:main}
The collection of subcategories $(\{\q\dsl Xw\}, \{\q\dsg Xw\})_{w \in \Z}$
is a bounded, nondegenerate HLR baric structure on $X$.
\end{thm}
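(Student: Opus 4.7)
The plan is to verify the three axioms of a baric structure (Definition~\ref{defn:baric}), its compatibility with the standard $t$-structure, its boundedness and nondegeneracy, and---since by Theorem~\ref{thm:hlr} it is enough---the hereditary property alone to obtain the full HLR conclusion. The argument runs by induction on the number of $G$-orbits in the underlying topological space of $X$. Axiom~(1) and the thickness of $\q\dsl Xw$ are immediate from Lemma~\ref{lem:qcgl-serre} and the long exact cohomology sequence, and the corresponding statements for $\q\dsg Xw$ follow by transport through the antiequivalence $\D$; compatibility with the standard $t$-structure is equally routine from the cohomological definition of $\q\dsl Xw$. It is worth recording at the outset the identity $\D(\q\dsl Xw) = \cheq\dsg X{-w}$ (and its mirror image), obtained directly from the definitions together with the involutivity of $\D$; this lets me transport any duality-free statement between the $\q$- and $\cheq$-categories for free.

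The heart of the proof is Axiom~(3), the existence of baric truncation triangles. In the base case of a single orbit $X = C$, the function $q$ takes the constant value $q(C)$ and $\q\cgl Xw$ coincides with $\cgl X{w + q(C)}$, so the truncation triangles come from applying the abelian $s$-truncation functors $\sigma_{\le w+q(C)}$ to each cohomology sheaf of a given $\cF \in \dgb X$ and reassembling via standard homological algebra. For the inductive step, choose a closed orbit $C \subset X$, put $Z = \overline C$ with its reduced structure and $U = X \ssm Z$, and, given $\cF \in \dgb X$, invoke the inductive hypothesis on $U$ to obtain $\beta_{\le w}^{U}(\cF|_U) \to \cF|_U$. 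The decisive step is to lift this to a morphism $A \to \cF$ in $\dgb X$ with $A \in \q\dsl Xw$ and $A|_U \cong \beta_{\le w}^{U}(\cF|_U)$: one applies Lemma~\ref{lem:q-ext} cohomology-sheaf by cohomology-sheaf (where the recessed hypothesis enters crucially through the twist by powers of $\cI_C$), then combines with the base case on $Z$ via the octahedral axiom to correct the $Z$-supported piece of the cone. Once the truncation triangles have been constructed functorially, Axiom~(2) follows in the usual way by exhibiting $\beta_{\le w}$ and $\beta_{\ge w+1}$ as adjoints to the inclusions.

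For the remaining claims, boundedness is immediate since any $\cF \in \dgb X$ has finitely many nonzero cohomology sheaves, each a coherent sheaf lying in $\q\cgl Xw$ for $w \gg 0$ (and, via the dualising identity above, in $\q\dsg Xw$ for $w \ll 0$), and nondegeneracy is then automatic. For the hereditary property, let $i: Y \hto X$ be a closed subscheme: if $|Y| \subsetneq |X|$ the inductive hypothesis directly provides a baric structure on $Y$ associated to $q|_{Y^{\gen}}$, while the case $|Y| = |X|$ (nilpotent thickenings) is handled by a parallel construction using the same function $q$. In either case, right baryexactness of $Li^*$ follows from Lemma~\ref{lem:qcgl-res} applied degree by degree, and left baryexactness of $Ri^!$ follows from the duality identification $Ri^!\,\D_X \cong \D_Y\,Li^*$. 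The main obstacle I foresee is the inductive extension step for truncation triangles: coordinating Lemma~\ref{lem:q-ext} across all cohomology degrees into a genuine morphism of complexes on $X$ that extends the given one on $U$, while keeping track of how the resulting cone interacts with the base case on $Z$ so that the octahedral axiom delivers the correct triangle. Everything else is either formal or an application of the lemmas already in hand.
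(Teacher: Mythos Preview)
Your proposal has two genuine gaps, both in the core inductive step for Axiom~(3).

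First, the lifting step does not work as written. You want to take $\beta_{\le w}^U(\cF|_U) \to \cF|_U$ and extend it to $A \to \cF$ on $X$ by applying Lemma~\ref{lem:q-ext} ``cohomology-sheaf by cohomology-sheaf.'' But Lemma~\ref{lem:q-ext} extends a \emph{subsheaf} of $\cG|_U$ to a subsheaf of $\cG$, and the cohomology sheaves $h^k\bigl(\beta_{\le w}^U(\cF|_U)\bigr)$ are not in general subsheaves of $h^k(\cF)|_U$: the connecting maps in the long exact sequence for the triangle $\beta_{\le w}^U \to \mathrm{id} \to \beta_{\ge w+1}^U \to$ need not vanish. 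Even restricting to the case where $\cF$ is a single sheaf does not save this, because the baric truncation $\beta_{\le w}^U(\cF|_U)$ of a sheaf is typically a genuine complex, not a sheaf. And even if each cohomology sheaf could be extended, you give no mechanism for assembling those extensions into an object $A$ together with a morphism $A \to \cF$ restricting to the given one on $U$; this is exactly the obstacle you flag at the end, and it is not a detail.

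Second, your plan to deduce Axiom~(2) \emph{after} building the truncation triangles is circular. The construction you sketch involves choices (Lemma~\ref{lem:q-ext} produces \emph{some} extension, not a canonical one), so it is not functorial on its face; the standard argument that truncations are adjoints, and hence that $\Hom$ vanishes, already presupposes Axiom~(2). The paper handles this by proving the $\Hom$-vanishing (Proposition~\ref{prop:baric-orth}) independently, by noetherian induction and reduction to a single orbit, \emph{before} constructing the triangles.

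For comparison, the paper's construction of the truncation triangle (Proposition~\ref{prop:baric-dt}) goes in the opposite geometric direction: it peels off an \emph{open} orbit $C$, not a closed one. For a sheaf $\cF$, it applies Lemma~\ref{lem:q-ext} on $X$ itself (not on a smaller open) to produce a subsheaf $\cF_1 \subset \cF$ in $\q\cgl Xw$ matching $\sigma_{\le w+q(C)}(\cF|_C)$ over $C$; then it dualises the quotient $\cG = \cF/\cF_1$ and applies Lemma~\ref{lem:q-ext} a second time on the $\cheq$-side to produce $\cF_2 \in \q\dsg X{w+1}$. This yields $\cF \in \{\cF_1\} * \{\cH\} * \{\cF_2\}$ with $\cH$ supported on the proper closed complement $X \ssm C$, where noetherian induction finishes the job. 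The point is that working from an open orbit lets you build both pieces directly on $X$, so no lifting-of-complexes problem arises.
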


The proof of this theorem will occupy the rest of this section.  Note that
the definition of $\q\dsml Xw$ is consistent with the notation used in
Section~\ref{sect:baric-coh1}.  We will see in Corollary~\ref{cor:qdspg}
that the same holds for $\q\dspg Xw$.

\begin{lem}\label{lem:subcat}
$\q\dsl Xw$ and $\q\dsg Xw$ are thick subcategories of $\dgb X$.  Moreover,
$\q\dsl Xw \subset \q\dsl X{w+1}$, and $\q\dsg Xw \supset \q\dsg X{w+1}$.
\end{lem}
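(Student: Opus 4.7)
The plan is to handle $\q\dsl Xw$ first and then deduce the corresponding statements for $\q\dsg Xw$ via Serre--Grothendieck duality. The key input is Lemma~\ref{lem:qcgl-serre}, which says that $\q\cgl Xw$ is a Serre subcategory of $\cg X$. Given this, verifying that $\q\dsl Xw \subset \dgb X$ is thick reduces to a routine cohomology chase: stability under the shift $[1]$ is immediate from the definition, and for a distinguished triangle $\cF \to \cG \to \cH \to$ with $\cF, \cH \in \q\dsl Xw$, the long exact cohomology sequence expresses each $h^k(\cG)$ as an extension of a subobject of $h^k(\cH)$ by a quotient of $h^k(\cF)$, and all of these stay in $\q\cgl Xw$ by the Serre property. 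Closure under direct summands is automatic, since $h^k$ commutes with direct sums and Serre subcategories are closed under summands.

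For $\q\dsg Xw$, I would use that $\D = \cRHom(-,\omega_X)$ restricts to an involutive antiequivalence $\dgb X \to \dgb X$. Unwinding the definition, an object $\cF \in \dgb X$ lies in $\q\dsg Xw = \D(\cheq\dsml X{-w}) \cap \dgb X$ if and only if $\D\cF \in \cheq\dsml X{-w}$; since $\D\cF$ is already bounded, this is the same as $\D\cF \in \cheq\dsml X{-w} \cap \dgb X = \cheq\dsl X{-w}$. Therefore, inside $\dgb X$, we have $\q\dsg Xw = \D(\cheq\dsl X{-w})$. The first part of the lemma, applied to the function $\dualq$ in place of $q$, shows that $\cheq\dsl X{-w}$ is thick; since $\D$ is an antiequivalence of triangulated categories on $\dgb X$, its image $\q\dsg Xw$ is thick as well.

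The two monotonicity statements are then immediate. The $s$-structure inclusions $\cgl V{w+q(x)} \subset \cgl V{w+1+q(x)}$ imply $\q\cgl Xw \subset \q\cgl X{w+1}$ and hence $\q\dsl Xw \subset \q\dsl X{w+1}$; applying the antiequivalence $\D$ to the analogous inclusion $\cheq\dsl X{-w-1} \subset \cheq\dsl X{-w}$ yields $\q\dsg X{w+1} \subset \q\dsg Xw$. The main obstacle, namely showing that the cohomological conditions defining $\q\cgl Xw$ behave well with respect to short exact sequences, has already been dispatched in Lemma~\ref{lem:qcgl-serre}; the only remaining subtlety is to keep track of which derived category ($\dgm X$, $\dgp X$, or $\dgb X$) each condition lives in, which is resolved cleanly by the fact that $\D$ preserves $\dgb X$.
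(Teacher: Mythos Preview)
Your proposal is correct and follows essentially the same approach as the paper: use Lemma~\ref{lem:qcgl-serre} to see that $\q\dsl Xw$ is defined by a cohomology condition valued in a Serre subcategory, hence is thick, and then transport everything to $\q\dsg Xw$ via the duality $\D$. The paper's proof is more terse (it simply says ``it follows that $\q\dsg Xw$ is as well'' and ``it is obvious that $\q\dsl Xw \subset \q\dsl X{w+1}$''), but your unpacking of the duality step and the direct-summand closure is exactly what is implicit there.
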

\begin{proof}
It is obvious that $\q\dsl Xw$ is stable under shift.  Since it is defined
by the requirement that cohomology sheaves belong to a Serre subcategory of
$\cg X$ (see Lemma~\ref{lem:qcgl-serre}), it is stable under extensions as
well, so it is indeed a thick subcategory of $\dgb X$.  It follows that
$\q\dsg Xw$ is as well.  It is obvious that $\q\dsl Xw \subset \q\dsl
X{w+1}$, and hence that $\q\dsg Xw \supset \q\dsg X{w+1}$.
\end{proof}

\begin{lem}\label{lem:baric-res}
Let $j: U \hto X$ be the inclusion of an open subscheme, and $i: Z \hto X$
the inclusion of a closed subscheme.  Then:
\begin{enumerate}
\item $j^*$ takes $\q\dsml Xw$ to $\q\dsml Uw$ and $\q\dspg Xw$ to
$\q\dspg Uw$.
\item $Li^*$ takes $\q\dsml Xw$ to $\q\dsml Zw$.
\item $Ri^!$ takes $\q\dspg Xw$ to $\q\dspg Zw$.
\item $i_*$ takes $\q\dsml Zw$ to $\q\dsml Xw$ and $\q\dspg Zw$ to
$\q\dspg Xw$.
\end{enumerate}
\end{lem}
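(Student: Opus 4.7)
The plan is to reduce all four parts to a single key fact: for any $G$-stable closed subscheme $i: Z \hto X$, any sheaf $\cG \in \q\cgl Xw$, and any $p \le 0$, the sheaf $L^pi^*\cG$ lies in $\q\cgl Zw$. Part~(2) follows from this by applying the key fact to each cohomology sheaf of $\cF$ and invoking the convergent hypertor spectral sequence $E_2^{p,q} = L^pi^*h^q(\cF) \Rightarrow h^{p+q}(Li^*\cF)$ together with the Serre-ness of $\q\cgl Zw$ (Lemma~\ref{lem:qcgl-serre}). Part~(1) for $j^*$ on $\q\dsml Xw$ is immediate from exactness of $j^*$, since a dense open $V \subset \barGx$ witnessing the $\q\cgl Xw$-condition at $x \in U^\gen$ gives rise to the dense open $V \cap U$ witnessing the $\q\cgl Uw$-condition. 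Part~(4) for $i_*$ on $\q\dsml Zw$ uses exactness of $i_*$ and the identity $i_1^* i_* \cong i_x^*$ when $Z_x \subset Z$ (while $i_*$ of a sheaf on $Z$ is generically zero on $\barGx$ otherwise).

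For the key fact, fix $y \in Z^\gen$ and a closed subscheme structure $i': Z_y \hto Z$ on $\barGy$, and let $\tilde\imath = i \circ i' : Z_y \hto X$. We must exhibit a dense open $V \subset Z_y$ on which $i'^* L^p i^* \cG|_V \in \cgl V{w+q(y)}$. Applying Lemma~\ref{lem:qcgl-res} to $\cG$ along $\tilde\imath$ with parameter $r = -p$ yields a dense open $V_0 \subset Z_y$ on which $h^p(L\tilde\imath^*\cG) \in \cgl {V_0}{w+q(y)}$. To transfer this to $i'^* L^p i^* \cG$, we use the Grothendieck spectral sequence
\[
E_2^{a,b} = L^a i'^*(L^b i^* \cG) \;\Longrightarrow\; L^{a+b}\tilde\imath^* \cG,
\]
which is supported in the third quadrant, and proceed by descending induction on $p$ starting at $p = 0$. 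The base case is trivial since $\tilde\imath^* \cG = i'^* i^* \cG$. For $p < 0$, assuming the key fact for all $p' > p$, the piece $E_\infty^{0,p}$ is a subquotient of $h^p(L\tilde\imath^*\cG)$, hence lies in $\cgl{V_0}{w+q(y)}$ by Serre-ness. The kernel of the surjection $E_2^{0,p} = i'^* L^p i^* \cG \twoheadrightarrow E_\infty^{0,p}$ is built from images of the differentials $d_r: E_r^{-r,\, p+r-1} \to E_r^{0,\, p}$ for $2 \le r \le 1-p$ (finitely many, since $L^b i^*\cG = 0$ for $b > 0$); each source is a subquotient of $L^{-r} i'^*(L^{p+r-1} i^*\cG)$, where $L^{p+r-1} i^*\cG \in \q\cgl Zw$ by the inductive hypothesis. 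A second application of Lemma~\ref{lem:qcgl-res}, now within the scheme $Z$ to the sheaf $L^{p+r-1} i^*\cG$ along $i'$, furnishes dense opens $V_r \subset Z_y$ on which $L^{-r} i'^*(L^{p+r-1} i^*\cG) \in \cgl{V_r}{w+q(y)}$. Setting $V = V_0 \cap \bigcap_{r=2}^{1-p} V_r$ and invoking Serre-ness of $\cgl V{w+q(y)}$ concludes the inductive step.

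The remaining assertions about $\q\dspg$ follow by duality. Using the identities $j^* \D \cong \D j^*$ (from $j^*\omega_X \cong \omega_U$), $Ri^! \D \cong \D Li^*$, and $i_* \D \cong \D i_*$ (the latter two by Grothendieck duality for the closed immersion $i$), combined with the definition $\q\dspg Xw = \D(\cheq\dsml X{-w})$ and the already-established $\q\dsml$ statements applied with the function $\dualq$ in place of $q$, one obtains parts~(1), (3), and~(4) for $\q\dspg$. The main obstacle is the descending induction in the second paragraph: one must check that the range $2 \le r \le 1-p$ genuinely captures all incoming differentials, and that the dense opens produced by Lemma~\ref{lem:qcgl-res} in its two distinct roles can be intersected compatibly so that Serre-ness simultaneously controls both the kernel and the cokernel of $E_2^{0,p} \twoheadrightarrow E_\infty^{0,p}$ on a single dense open.
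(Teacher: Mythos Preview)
Your argument is correct. Parts~(1), (3), and~(4) proceed exactly as in the paper: exactness of $j^*$ and $i_*$ handles the $\dsml{}{}$-statements, and duality (via $j^*\D \cong \D j^*$, $Ri^!\D \cong \D Li^*$, $i_*\D \cong \D i_*$) handles the $\dspg{}{}$-statements.

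The genuine difference is in part~(2). The paper argues by noetherian induction on both $X$ and $Z$, interleaved with downward induction on $k$ for $h^k(Li^*\cF)$: to check the $\q\cgl Zw$-condition at a point $y \in Z^\gen$ with $\kappa: Y \hto Z$, it invokes the noetherian hypothesis applied to $L(\kappa\circ i)^*\cF$ when $Y \subsetneq Z$, and Lemma~\ref{lem:qcgl-res} directly when $Y = Z$. You instead isolate the single-sheaf statement $L^p i^*\cG \in \q\cgl Zw$ and prove it by descending induction on $p$ alone, using the hypercohomology spectral sequence for $Li'^* \circ Li^* \cong L\tilde\imath^*$ in place of the noetherian step. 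Both routes ultimately rest on Lemma~\ref{lem:qcgl-res}; yours trades the nested induction for spectral-sequence bookkeeping, which packages the argument more cleanly once the convergence and finiteness are checked (and they are: the third-quadrant shape forces $E_r^{-r,p+r-1} = 0$ once $r > 1-p$, so only finitely many dense opens need to be intersected at each stage). One small slip in your closing paragraph: since all outgoing differentials from the edge $a=0$ vanish, the map $E_2^{0,p} \to E_\infty^{0,p}$ is surjective and $E_\infty^{0,p}$ is a \emph{subobject} of $h^p(L\tilde\imath^*\cG)$, so there is no cokernel to control; Serre-ness of $\cgl V{w+q(y)}$ handles the extension of the kernel by $E_\infty^{0,p}$ directly.
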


This statement closely resembles Lemma~\ref{lem:hlr-baric-res}; indeed, it
would merely be an instance of that lemma if Theorem~\ref{thm:main} were
already known.  However, the proof of Theorem~\ref{thm:main} depends on
this lemma, so we must give it an independent proof.

\begin{proof}
(1)~It is immediate from the definition of $\q\cgl Xw$ that $j^*$ takes
$\q\cgl Xw$ to $\q\cgl Uw$.  Since $j^*$ is an exact functor, it follows
that it takes $\q\dsml Xw$ to $\q\dsml Uw$.  Since $j^*$ commutes with
$\D$, we also see that it takes $\q\dspg Xw$ to $\q\dspg Uw$.

(2)~We proceed by noetherian induction: assume the statement is known if
$X$ is replaced by a proper closed subscheme, or if $X$ is retained and $Z$
is replaced by a proper closed subscheme.  Suppose $\cF \in \q\dsml Xw$. 
We show by downward induction on $k$ that $h^k(Li^*\cF) \in \q\cgl Zw$. 
For large $k$, $h^k(Li^*R\cF) = 0$, so this holds trivially.  Now, assume
that $h^r(Li^*\cF) \in \q\cgl Zw$ for all $r > k$, and consider the
distinguished triangle $\tau^{\le k}Li^*\cF \to Li^*\cF \to \tau^{\ge
k+1}Li^*\cF \to$.  Then $\tau^{\ge k+1}Li^*\cF$ is an object of $\q\dsl
Zw$, so for any $x \in Z^\gen$ and any closed subscheme structure $\kappa:
Y \hto Z$ on $\barGx$, we know that $L\kappa^*\tau^{\ge k+1}Li^*\cF \in
\q\dsml Yw$.  Consider the exact sequence
\[
h^{k-1}(L\kappa^*\tau^{\ge k+1}Li^*\cF) \to h^k(L\kappa^*\tau^{\le
k}Li^*\cF) \to h^k(L\kappa^*Li^*\cF).
\]
The first term above belongs to $\q\cgl Yw$.  Observe that
$h^k(L\kappa^*\tau^{\le k}Li^*\cF) \cong \kappa^*h^k(Li^*\cF)$.  Thus, to
prove that $h^k(Li^*\cF) \in \q\cgl Zw$, we must show that there is a
dense open subscheme $V \subset Y$ such that $h^k(L\kappa^*\tau^{\le
k}Li^*\cF)|_V \in \cgl V{w+q(x)}$.

If $Y$ is a proper closed subscheme of $Z$, then we have assumed
inductively that $L(\kappa \circ i)^*\cF \in \q\dsml Yw$, and in that case,
the last term in the sequence above belongs to $\q\cgl Yw$ as well.  By
Lemma~\ref{lem:qcgl-serre}, the middle term as well, and the existence of
the desired open subscheme $V \subset Y$ follows.

On the other hand, if $Y = Z$, and $\kappa$ is the identity map, then
Lemma~\ref{lem:qcgl-res} gives us a dense open subscheme $V' \subset Z$
such that $h^k(Li^*\cF)|_{V'} \in \cgl {V'}{w+q(x)}$.  The fact that
$h^{k-1}(\tau^{\ge k+1}Li^*\cF) \in \q\cgl Zw$ implies that there a dense
open subscheme $V'' \subset Z$ with $h^{k-1}(\tau^{\ge k+1}Li^*\cF)|_{V''}
\in \q\cgl {V''}{w+q(x)}$.  If we let $V = V' \cap V''$, then we see from
the exact sequence above that $h^k(\tau^{\le k}Li^*\cF)|_V \in \cgl
V{w+q(x)}$, as desired.

(3)~If $\cF \in \q\dspg Xw$, let $\cF' \in \cheq\dsml X{-w}$ be such that
$\D\cF' \cong \cF$.  Then $Ri^! \cF \cong \D(Li^*\cF') \in \q\dspg Zw$
since $Li^*\cF' \in \cheq\dsml Zw$.

(4)~Since $\q\dsml Zw$ and $\q\dsml Xw$ are defined by conditions on their
cohomology sheaves, the first statement follows from the fact that $i_*$ is
an exact functor taking $\q\cgl Zw$ to $\q\cgl Xw$.  The second statement
follows by duality.
\end{proof}

\begin{prop}\label{prop:baric-orth}
If $\cF \in \q\dsml Xw$ and $\cG \in \q\dspg X{w+1}$, then $\Hom(\cF,\cG) =
0$.
\end{prop}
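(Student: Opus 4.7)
I would prove this by induction on the number of $G$-orbits in $X$, after two preliminary reductions.  Write $\cG = \D\cG'$ for some $\cG' \in \cheq\dsml X{-w-1}$.  Since $\cG \in \dgp X$ lies in $\dgpg X n$ for some $n$, the truncation triangle $\tau^{\le n-1}\cF \to \cF \to \tau^{\ge n}\cF$ yields $\Hom(\cF,\cG) \cong \Hom(\tau^{\ge n}\cF,\cG)$, and because $\q\cgl Xw$ is a Serre subcategory (Lemma~\ref{lem:qcgl-serre}) the truncation $\tau^{\ge n}\cF$ still lies in $\q\dsl Xw$, so we may assume $\cF \in \q\dsl Xw \subset \dgb X$.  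The hypercohomology spectral sequence
\[
E_2^{p,q} = \Ext^p(h^{-q}(\cF),\cG) \;\Longrightarrow\; \Hom(\cF,\cG[p+q]),
\]
together with the shift-invariance of $\q\dspg X{w+1}$, then reduces the problem to showing $\Hom(\cF_0,\cG) = 0$ whenever $\cF_0 \in \q\cgl Xw$ is a single sheaf.

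For the inductive step, suppose $X$ has more than one orbit.  Pick an open $G$-orbit $C \subset X$, and let $j: C \hto X$ and $i: Z \hto X$ denote the inclusion of $C$ and of the reduced closed complement $Z = X \ssm C$.  Apply $\Hom(\cF_0, -)$ to the local cohomology triangle
\[
\varinjlim_{Z_1 \supset Z} i_{Z_1*} Ri^!_{Z_1}\cG \;\longrightarrow\; \cG \;\longrightarrow\; Rj_*j^*\cG \;\longrightarrow,
\]
with $Z_1$ ranging over nilpotent thickenings of $Z$ in $X$ (the same triangle used in the proof of the gluing theorem for HLR baric structures).  The right-hand term of the resulting exact sequence is $\Hom(j^*\cF_0, j^*\cG)$; by Lemma~\ref{lem:baric-res}(1), $j^*\cF_0 \in \q\cgl Cw$ and $j^*\cG \in \q\dspg C{w+1}$, and since $C$ has strictly fewer orbits than $X$, the inductive hypothesis kills this term.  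The left-hand term equals $\varinjlim \Hom(Li^*_{Z_1}\cF_0, Ri^!_{Z_1}\cG)$ by compactness of $\cF_0$; by parts~(2) and~(3) of Lemma~\ref{lem:baric-res}, each summand has $Li^*_{Z_1}\cF_0 \in \q\dsml{Z_1}w$ and $Ri^!_{Z_1}\cG \in \q\dspg{Z_1}{w+1}$, and each $Z_1$ also has strictly fewer orbits than $X$, so this vanishes by induction as well.  Consequently $\Hom(\cF_0,\cG) = 0$.

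The main obstacle is the base case, in which $X$ consists of a single reduced $G$-orbit and the open/closed decomposition above is trivial.  In that setting $X$ is a homogeneous $G$-space, $q$ is constant at some integer $q_0$, and one has $\q\cgl Xw = \cgl X{w+q_0}$ and $\cheq\cgl Xw = \cgl X{w+\alt X - q_0}$; the dualizing complex $\omega_X$ is concentrated in degree $\cod X$ with cohomology sheaf in $\cgl X{\alt X}\cap \cgg X{\alt X}$.  The plan here is to invoke Grothendieck--Serre duality to write $\Hom(\cF_0, \D\cG') \cong \Hom(\cF_0 \otimes^L \cG', \omega_X)$, analyze the cohomology sheaves of $\cF_0 \otimes^L \cG'$ using the multiplicativity of the $s$-structure on a homogeneous space, and then exploit the arithmetic identity $(w+q_0) + (-w-1+\alt X - q_0) = \alt X - 1$ together with the defining orthogonality $\Hom(\cgl X{\alt X - 1}, \cgg X{\alt X}) = 0$ of the $s$-structure to force the vanishing.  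This sheaf-level computation is where the recessed hypothesis on the $s$-structure is most likely to enter, and it is the step I expect to be most delicate, presumably requiring a careful application of Lemma~\ref{lem:qcgl-res} and the $s$-structure axioms from \cite{a}.
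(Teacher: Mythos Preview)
Your overall architecture matches the paper's: reduce $\cF$ to a bounded object, then to a single sheaf via the hypercohomology spectral sequence, and attack the problem by an open--closed decomposition together with Lemma~\ref{lem:baric-res}.  The paper also runs an induction (noetherian rather than orbit-count) and uses exactly the same exact sequence
\[
\varinjlim_{Z'} \Hom(Li^*_{Z'}\cF, Ri^!_{Z'}\cG) \to \Hom(\cF,\cG) \to \Hom(\cF|_U,\cG|_U)
\]
to kill the closed part by induction.  The difference is in how the open part is handled, and this is precisely where your argument is incomplete.

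The paper does \emph{not} reduce to a single-orbit base case.  Instead, after reducing $\cF$ to a sheaf, it performs the symmetric reduction on $\cG$: writing $\cG = \D\cG'$, it truncates $\cG'$ so that $\cG$ becomes bounded, and then reduces $\cG'$ to a single sheaf as well.  At that point one chooses a generic point $x$ of $X$ and an open $U$ (not necessarily an entire orbit) on which \cite[Lemmas~6.1--6.2 and Proposition~6.8]{a} guarantee that $\cG|_U$ is concentrated in a single degree $d$ with $\cG[d]|_U \in \cgg U{q(x)+w+1}$.  If $d>0$ the vanishing $\Hom(\cF|_U,\cG|_U)=0$ is trivial; otherwise one invokes Axiom~(S9) of \cite{a}, which is precisely a higher-$\Ext$ vanishing statement (after shrinking $U$).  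The complement of $U$ is a proper closed subscheme, so noetherian induction finishes.

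Your proposed tensor-product route for the single-orbit case runs into the same obstacle that (S9) is designed to overcome.  Granting multiplicativity, the cohomology sheaves of $\cF_0\otimes^L\cG'$ lie in $\cgl X{\alt X-1}$, but $\omega_X$ sits in degree $\cod X$, so $\Hom(\cF_0\otimes^L\cG',\omega_X)$ unpacks into higher $\Ext$ groups between objects of $\cgl X{\alt X-1}$ and $\omega_X[\cod X]\in\cgg X{\alt X}$.  The basic orthogonality $\Hom(\cgl X{v},\cgg X{v+1})=0$ says nothing about these, and on a single orbit there is no smaller open set to pass to.  So you will still need an Ext-vanishing input equivalent to (S9); once you accept that, the argument collapses into the paper's.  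Incidentally, your guess that the recessed hypothesis enters here is off: the paper notes explicitly (Remark~\ref{rmk:ineq}) that the recessed assumption is used only in Lemma~\ref{lem:q-ext}, hence in Proposition~\ref{prop:baric-dt}, not in the present orthogonality statement.
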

\begin{proof}
We proceed by noetherian induction: assume the theorem is known for all
proper closed subschemes of $X$.  Let $a$ and $b$ be such that $\cG \in
\dgpg Xa$ and $\cF \in \dgml Xb$.  Since $\Hom(\cF,\cG) \cong
\Hom(\tau^{\ge a}\cF, \cG)$, we may replace $\cF$ by $\tau^{\ge a}\cF$ and
assume that $\cF \in \q\dsl Xw$.  Next, let $\cG' \in \dsml X{-w-1}$ be
such that $\D\cG' \cong \cG$.  For a  sufficiently small integer $c$, we
will have $\D(\tau^{\le c}\cG') \in \dgpg X{b+1}$.  From this, it follows
that $\Hom(\cF,\cG) \cong \Hom(\cF,\D(\tau^{\ge c+1}\cG'))$.  Replacing
$\cG$ by $\D(\tau^{\ge c+1}\cG')$, we may assume that $\cG \in \q\dsg Xw$.

With $\cF$ and $\cG$ both in $\dgb X$, induction on the number of
cohomology sheaves allows us to reduce to the case where both $\cF$ and
$\cG' := \D\cG$ are concentrated in a single degree.  By shifting both
objects simultaneously, we may assume without loss of generality that
$\cF \in \cg X$.  Let $x$ be a generic
point of $X$.  There is an open subscheme $U \subset X$ containing $x$ such
that $\cG'|_U \in \cgl U{\alt \barGx - q(x)-w-1}$.  By~\cite[Remark~3.2 and
Lemmas~6.1--6.2]{a}, we may replace $U$ by a smaller open subscheme
containing $x$ such that $\cG|_U$ is concentrated in a single degree, say
$d$, and such that $\cG[d]|_U \in \cgg U{q(x)+w+1}$.  If $d > 0$, then
clearly $\Hom(\cF|_U, \cG|_U) = 0$.  Otherwise, we
invoke~\cite[Axiom~(S9)]{a} to replace $U$ by a smaller open subscheme such
that $\Hom(\cF|_U, \cG|_U) = 0$.  Let $Z$ be the complementary closed
subspace to $U$, and consider the exact sequence 
\[
\lim_{\substack{\to \\ Z'}} \Hom(Li^*_{Z'}\cF, Ri^!_{Z'}\cG) \to
\Hom(\cF,\cG) \to \Hom(\cF|_U,\cG|_U),
\]
where $i_{Z'}: Z' \hto X$ ranges over all closed subscheme structures on
$Z$.  We have just seen that the last term vanishes.  Since $Li^*_{Z'}\cF
\in \q\dsml {Z'}w$ and $Ri^!_{Z'}\cG \in \q\dspg {Z'}{w+1}$, the first term
vanishes by induction.  So $\Hom(\cF,\cG) = 0$, as desired.
\end{proof}

\begin{prop}\label{prop:baric-dt}
For any $\cF \in \dgb X$, there is a distinguished triangle $\cF' \to \cF
\to \cF'' \to$ with $\cF' \in \q\dsl Xw$ and $\cF'' \in \q\dsg X{w+1}$. 
Moreover,
if $\cF \in \dgbg X0$, then $\cF'$ and $\cF''$ lie in $\dgbg X0$ as well.
\end{prop}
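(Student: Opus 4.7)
I would proceed by noetherian induction on $X$, assuming the statement holds whenever $X$ is replaced by a proper closed subscheme. The first step is a standard reduction to the case where $\cF \in \cg X$. Given the triangle $\tau^{\le k-1}\cF \to \cF \to \tau^{\ge k}\cF \to$ and baric truncations of the two outer terms, two applications of the octahedron axiom---combined with the full orthogonality $\Hom(A, B[n]) = 0$ for $A \in \q\dsl Xw$, $B \in \q\dsg X{w+1}$ and all $n \in \Z$ (immediate from Proposition~\ref{prop:baric-orth} and stability under shifts)---construct a baric truncation of $\cF$ whose left- and right-hand terms fit into extension triangles by the respective pieces of the two inputs; thickness (Lemma~\ref{lem:subcat}) then places them in $\q\dsl Xw$ and $\q\dsg X{w+1}$. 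Inducting on the number of nonzero cohomology sheaves of $\cF$ reduces to $\cF \in \cg X$, with the $\dgbg X0$-clause preserved since only standard truncations are used.

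For $\cF \in \cg X$, let $C$ be an open $G$-orbit of $X$ and let $i_Z : Z \hto X$ be the reduced closed complement. The $s$-structure on $C$ provides the short exact sequence
\[
0 \to \sigma_{\le w+q(C)}(\cF|_C) \to \cF|_C \to \sigma_{\ge w+q(C)+1}(\cF|_C) \to 0,
\]
and Lemma~\ref{lem:q-ext}, whose proof invokes the recessed hypothesis, extends the subsheaf to $\cF_0 \subset \cF$ lying in $\q\cgl Xw \subset \q\dsl Xw$. Setting $\cG = \cF/\cF_0$, the restriction $\cG|_C$ lies in $\q\dsg C{w+1}$, as one checks by a direct single-orbit calculation using the duality functor $\D$, the altitude $\alt C$, and the codimension $\cod C$ (observing that on a dense open in $C$ the sheaf $\cG|_C$ is locally free of the correct $s$-weight, so $\D(\cG|_C)$ is concentrated in degree $-\cod C$ with the prescribed weight).

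The remaining task is to modify $\cF_0$ using information on $Z$ so that the quotient becomes a global object of $\q\dsg X{w+1}$; the obstruction to $\cG \in \q\dsg X{w+1}$ is supported entirely on $Z$. I apply the inductive hypothesis on $Z$ to a bounded $Z$-supported approximation of $\cG$---typically built from $Ri_Z^!\cG$ after standard truncation, or by passing through a colimit over nilpotent thickenings $Z' \hto X$ of $Z$ in the spirit of the exact sequence used in Proposition~\ref{prop:baric-orth}---to obtain correction data which, when pushed forward by the baryexact functor $i_{Z*}$ (Lemma~\ref{lem:baric-res}) and spliced with $\cF_0 \to \cF \to \cG$ via the octahedron axiom, produces the desired triangle $\cF' \to \cF \to \cF'' \to$. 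The main obstacle is this correction step: verifying that the final $\cF''$ actually lies in $\q\dsg X{w+1}$ is a derived-categorical condition phrased via $\D$, so cohomology sheaves of $\D\cF''$ must satisfy the $\cheq\cgl X{-w-1}$ condition at every generic point and every closed subscheme structure on its closure, requiring careful tracking through all nilpotent thickenings of the supporting closed subset. The $\dgbg X0$-clause is preserved because $\cF_0$ is a subsheaf of a degree-zero sheaf and the correction from $Z$ can be arranged to lie in $\dgbg Z0$ via the inductive $\dgbg Z0$-clause.
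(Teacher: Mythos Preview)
Your overall architecture---noetherian induction, reduction to sheaves via the $9$-lemma/octahedron, opening an orbit $C$, and applying Lemma~\ref{lem:q-ext} to carve out $\cF_0 \subset \cF$ in $\q\cgl Xw$---matches the paper exactly. The gap is in the correction step, which you correctly identify as the crux but leave unresolved.

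Your plan is to take $Ri_Z^!\cG$ (or a colimit over thickenings), truncate to a bounded object on $Z$, apply the inductive hypothesis there, and splice the result back. The problem is that this produces objects \emph{supported on $Z$}, and there is no gluing mechanism available yet (we are still proving the baric structure exists) to combine a $Z$-supported piece in $\q\dsg Z{w+1}$ with $\cG|_C \in \q\dsg C{w+1}$ into a \emph{global} object of $\q\dsg X{w+1}$. Concretely: if you enlarge $\cF_0$ by $Z$-supported correction data, that data need not lie in $\q\cgl Xw$, so $\cF'$ may leave $\q\dsl Xw$; if instead you try to quotient $\cG$ by $Z$-supported data, you need to know in advance what the ``bad part'' of $\cG$ along $Z$ is, and the inductive hypothesis on $Z$ does not tell you this, because $\cG$ is not supported on $Z$.

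The paper's insight is to exploit the \emph{symmetry} in the definition $\q\dsg X{w+1} = \D(\cheq\dsl X{-w-1})$: rather than correcting $\cG$ along $Z$, one dualizes and applies Lemma~\ref{lem:q-ext} a \emph{second} time, now with $\dualq$ in place of $q$, to find a subsheaf $\cG_1 \subset h^b(i_*Ri^!\D\cG)$ (where $b = \cod\overline C$) lying in $\cheq\cgl X{-w-1}$ and restricting to $\D\cG[b]|_C$. Then $\cF_2 = \D(\cG_1[-b])$ lies in $\q\dsg X{w+1}$ \emph{by construction}, globally, with no verification needed. The cone $\cH$ of the resulting map $\cG \to \cF_2$ is automatically supported off $C$, and it is \emph{this} object---not $Ri_Z^!\cG$---to which the noetherian induction is applied. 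The pattern is $\cF \in \{\cF_1\} * \{\cH\} * \{\cF_2\}$ with both outer pieces already in the correct baric categories and only the middle piece requiring induction. You had the first half of this (Lemma~\ref{lem:q-ext} for $q$); the missing move is its mirror image for $\dualq$.
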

\begin{proof}
Once again, we proceed by noetherian induction, and assume the result is
known for all proper closed subschemes of $X$.  Now, assume first that
$\cF$ is a sheaf.  Let $C \subset X$ be an open (and possibly
nonreduced) orbit, and let $i:
\overline C \hto X$ be the inclusion of its closure.  By
Lemma~\ref{lem:q-ext}, there exists a subsheaf  $\cF_1 \subset \cF$ such
that $\cF_1 \in \q\cgl Xw$ and $\cF_1|_C \cong \sigma_{\le
w+q(C)}(\cF|_C)$.  Next, form a short exact sequence
\[
0 \to \cF_1 \to \cF \to \cG \to 0.
\]
Let $b = \cod \overline C$.  Then $i_*Ri^!\D\cG \in \dgpg Xb$, and,
by~\cite[Lemma~6.1]{a}, we know that $i_*Ri^!\D\cG|_C \cong \D\cG|_C$ is
concentrated in degree $b$.  Furthermore,~\cite[Proposition~6.8]{a} tells
us that $\D\cG[b]|_C \in \cgl C{\alt \overline C - q(C) -w - 1}$.  (If $C$ is
reduced, these assertions about $\D\cG|_C$ are immediate from the fact that
$\D$ is an exact functor, but in general, we must invoke~\cite[Lemma~6.1
and Proposition~6.8]{a}.)  Now, we use Lemma~\ref{lem:q-ext} again to find
a subsheaf $\cG_1 \subset h^b(i_*Ri^!\D\cG)$ such that $\cG_1 \in \cheq\cgl
X{-w-1}$ and $\cG_1|_C \cong \D\cG[b]|_C$.  Form the composition
\[
\cG_1[-b] \to h^b(i_*Ri^!\D\cG)[-b] \cong \tau^{\le b}i_*Ri^!\D\cG \to
i_*Ri^!\D\cG \to \D\cG,
\]
and then complete it to a distinguished triangle
\[
\cG_1[-b] \to \D\cG \to \cG' \to.
\]
Here, $\cG'$ is necessarily supported on the complement of $C$.  Let $\cF_2
= \D(\cG_1[-b])$, and let $\cH = \D\cG'$, so we have a distinguished
triangle
\[
\cH \to \cG \to \cF_2 \to.
\]
Since $\cod \overline C = b$, we see that $\cF_2 \in \dgbg X0$.  This
distinguished triangle then implies that $\cH \in \dgbg X0$ as well.  Note
also that $\cF_2 \in \q\dsg X{w+1}$, and that
\[
\cF \in \{\cF_1\} * \{\cH\} * \{\cF_2\}.
\]
Since $\cF_1 \in \q\dsl Xw$, $\cF_2 \in \q\dsg X{w+1}$, and $\cH$ is
supported on a proper closed subscheme, we conclude that $\cF \in \q\dsl Xw
* \q\dsg X{w+1}$, as desired.  The last statement of the proposition holds
by noetherian induction as well, since $\cF_1$, $\cH$, and $\cF_2$ all lie
in $\dgbg X0$ by construction.

The result also follows for any object of $\dgb X$ that is concentrated in
a single degree.  Finally, for general objects $\cF \in \dgb X$, we proceed
by induction on the number of nonzero cohomology sheaves.  Let $a \in \Z$
be such that $\tau^{\le a}\cF$ and $\tau^{\ge a+1}\cF$ are both nonzero. 
Then, they both have fewer nonzero cohomology sheaves than $\cF$, and we
assume inductively that there exist distinguished triangles
\begin{gather*}
\cF'_1 \to \tau^{\le a}\cF \to \cF''_1 \to, \\
\cF'_2 \to \tau^{\ge a+1}\cF \to \cF''_2 \to
\end{gather*}
with $\cF'_1, \cF'_2 \in \q\dsl Xw$ and $\cF''_1, \cF''_2 \in \q\dsg
X{w+1}$.  Consider the composition
\[
\cF'_2[-1] \to (\tau^{\ge a+1}\cF)[-1] \to \tau^{\le a}\cF \to \cF''_1.
\]
By Proposition~\ref{prop:baric-orth}, this composition is $0$, so we see
from the exact sequence
\[
\Hom(\cF'_2[-1], \cF'_1) \to \Hom(\cF'_2[-1], \tau^{\le a}\cF) \to
\Hom(\cF'_2[-1],\cF''_1)
\]
that the morphism $\cF'_2[-1] \to (\tau^{\ge a+1}\cF)[-1] \to \tau^{\le
a}\cF$ factors through $\cF'_1$.  That is, we have a commutative square
\[
\xymatrix@=10pt{
\cF'_2[-1] \ar[r]\ar[d] & (\tau^{\ge a+1}\cF)[-1] \ar[d] \\
\cF'_1 \ar[r] & \tau^{\le a}\cF}
\]
We define objects $\cF', \cF'' \in \dgb X$ by completing this diagram 
as follows, using the $9$-lemma~\cite[Proposition~1.1.11]{bbd}:
\[
\xymatrix@=10pt{
\cF'_2[-1] \ar[r]\ar[d] & (\tau^{\ge a+1}\cF)[-1] \ar[r]\ar[d] &
\cF''_2[-1] \ar[r]\ar[d] & {}\\
\cF'_1 \ar[r]\ar[d] & \tau^{\le a}\cF \ar[r]\ar[d] & \cF''_1 \ar[r]\ar[d]
&{}\\
\cF' \ar[r]\ar[d] & \cF \ar[r]\ar[d] & \cF'' \ar[r]\ar[d] & {} \\
&&&}
\]
Since $\q\dsl Xw$ and $\q\dsg X{w+1}$ are stable under shift and
extensions, we see that $\cF' \in \q\dsl Xw$ and $\cF'' \in \q\dsg X{w+1}$,
as desired.  Moreover, if $\cF$ lies in $\dgbg X0$, then so do $\tau^{\le
a}\cF$ and $\tau^{\ge a+1}\cF$, and hence, by induction, the objects
$\cF'_1$, $\cF''_1$, $\cF'_2$, and $\cF''_2$ all lie in $\dgbg X0$ as well.
 It then follows that $\cF'$ are $\cF''$ are in $\dgbg X0$, as desired.
\end{proof}

\begin{proof}[Proof of Theorem~\ref{thm:main}]
Lemma~\ref{lem:subcat} and Propositions~\ref{prop:baric-orth}
and~\ref{prop:baric-dt} together state that all the axioms for a baric
structure hold.  Moreover, the last part of Proposition~\ref{prop:baric-dt}
tells us that the baric truncation functors are left $t$-exact (with
respect to the standard $t$-structure), and it is obvious from the
definition of $\q\dsl Xw$ that it is preserved by the truncation functors
$\tau^{\le n}$ and $\tau^{\ge n}$.  Thus, the baric structure $(\{\q\dsl
Xw\}, \{\q\dsg Xw\})_{w\in \Z}$ is compatible with the standard
$t$-structure. Next,
for any closed subscheme $i: Z \hto X$, Lemma~\ref{lem:baric-res} tells us
that $Li^*$ is right baryexact and that $Ri^!$ is left baryexact.  Thus,
this baric structure is hereditary, and hence HLR by Theorem~\ref{thm:hlr}.

It remains to prove that the baric structure is bounded (and therefore
nondegenerate).  Every sheaf in $\cg X$ belongs to some $\cgl Xn$, and
hence to some $\q\cgl Xw$ (simply take $w$ to be the maximum value of $n -
q(x)$).  Since an object $\cF \in \dgb X$ has finitely many nonzero
cohomology sheaves, we can clearly find a $w$ such that all its cohomology
sheaves belong to $\q\cgl Xw$, so that $\cF \in \q\dsl Xw$.  The same
reasoning yields an integer $v$ such that $\D\cF \in \cheq\dsl X{-v}$, and
hence $\cF \in \q\dsg Xv$.  Thus, the baric structure is bounded and
nondegenerate.
\end{proof}

We can now verify that the notation $\q\dspg Xw$ is consistent with the
notation of Section~\ref{sect:baric-coh1}.

\begin{cor}\label{cor:qdspg}
We have
\[
\q\dspg Xw = \{ \cF \in \dgp X \mid \text{$\q\beta_{\le w-1}\tau^{\le k}\cF
\in \dgbg X{k+2}$ for all $k$} \}.
\]
\end{cor}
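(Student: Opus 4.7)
My plan is to prove the two inclusions by reducing both sides to a common Hom-vanishing criterion, exploiting the self-duality that is built into the definitions of Section~\ref{sect:baric-coh2}.

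First, I would apply Lemma~\ref{lem:unbdd-orth}(2) to the baric structure $(\{\q\dsl Xw\}, \{\q\dsg Xw\})_{w \in \Z}$ furnished by Theorem~\ref{thm:main} (which I may do because that theorem has now been proved). This rewrites the right-hand side of the corollary as
\[
B := \{\cF \in \dgp X \mid \Hom(\cG,\cF) = 0 \text{ for all } \cG \in \q\dsl X{w-1}\}.
\]
On the other side, write $A := \D(\cheq\dsml X{-w})$ for the Section~\ref{sect:baric-coh2} definition of $\q\dspg Xw$. The task becomes showing $A = B$.

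For $A \subset B$: any $\cG \in \q\dsl X{w-1}$ lies in $\q\dsml X{w-1}$, while $A = \D(\cheq\dsml X{-w}) = \q\dspg X{(w-1)+1}$ in the sense of Section~\ref{sect:baric-coh2}. Proposition~\ref{prop:baric-orth} then immediately yields $\Hom(\cG,\cF) = 0$ for every $\cF \in A$.

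For $B \subset A$: suppose $\cF \in B$, i.e.\ $\Hom(\cG,\cF) = 0$ for all $\cG \in \q\dsl X{w-1}$. I would apply the antiequivalence $\D$ on $\dgb X$ to convert this to a statement about $\D\cF \in \dgm X$. Since $\D$ restricts to a self-antiequivalence of $\dgb X$, and since by the definition $\cheq\dspg Xv = \D(\q\dsml X{-v})$ (using the involution $\hat{\hat q} = q$) one gets the bounded identity
\[
\D(\q\dsl X{w-1}) = \cheq\dsg X{-w+1},
\]
the Hom-vanishing on $\cF$ translates into $\Hom(\D\cF,\cH) = 0$ for every $\cH \in \cheq\dsg X{-w+1}$. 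Lemma~\ref{lem:unbdd-orth}(1), applied now to the baric structure associated with $\dualq$, asserts exactly that this Hom-vanishing characterizes membership in $\cheq\dsml X{-w}$. Hence $\D\cF \in \cheq\dsml X{-w}$, which gives $\cF \in A$, completing the proof.

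The only real subtlety is the duality bookkeeping in the second inclusion, in particular the identity $\D(\q\dsl X{w-1}) = \cheq\dsg X{-w+1}$; this is where one must be careful that $\D$ interchanges the bounded pieces cleanly, so that the unbounded characterization of Lemma~\ref{lem:unbdd-orth}(1) applies to $\D\cF$. Everything else is a direct application of already-established results.
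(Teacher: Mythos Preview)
Your proof is correct and follows essentially the same approach as the paper's: both reduce the corollary to the Hom-vanishing characterization $\Hom(\cG,\cF)=0$ for all $\cG\in\q\dsl X{w-1}$ via Lemma~\ref{lem:unbdd-orth}(2), and then identify this with $\D(\cheq\dsml X{-w})$ by dualizing and invoking Lemma~\ref{lem:unbdd-orth}(1) for $\dualq$, using the identity $\D(\q\dsl X{w-1})=\cheq\dsg X{-w+1}$. The only cosmetic difference is that the paper handles both inclusions in one stroke (an ``if and only if'' chain), whereas you separate $A\subset B$ and $B\subset A$ and use Proposition~\ref{prop:baric-orth} for the easy direction rather than the dualized Lemma~\ref{lem:unbdd-orth}(1); either works.
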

\begin{proof}
We have already observed that the definition of $\cheq\dsml X{-w}$ is
consistent with the notation of Section~\ref{sect:baric-coh1}, so by
Lemma~\ref{lem:unbdd-orth}, for $\cF \in \dgm X$, we have $\cF \in
\cheq\dsml X{-w}$ if and only if $\Hom(\cF,\cG) = 0$ for all $\cG \in
\cheq\dsg X{-w+1}$.  Applying $\D$, we have $\cF \in \q\dspg Xw$ if and
only if $\Hom(\D\cF,\D\cG) = 0$ for all $\cG \in \q\dsl X{w-1}$, or,
equivalently, if $\Hom(\cG,\cF) = 0$ for all $\cG \in \q\dsl X{w-1}$.  The
corollary follows by another application of Lemma~\ref{lem:unbdd-orth}.
\end{proof}

\begin{rmk}\label{rmk:ineq}
The proof of Lemma~\ref{lem:q-ext} depends in an essential way on the
assumption of finitely many orbits and a recessed $s$-structure, but no
other arguments
given in this section do.  (The role of the orbit closure $\overline C$ in
the proof of Proposition~\ref{prop:baric-dt} could instead have been played
by $\barGx$ for some generic point $x$.)  By imposing additional conditions
that permit us
to evade Lemma~\ref{lem:q-ext}, we can find a version of
Theorem~\ref{thm:main} that holds in much greater generality.

Specifically, assume that the function $q: X^\gen \to \Z$ is
\emph{monotone}: that is, if $x \in \barGy$, then $q(x) \ge q(y)$.  
Suppose we have a coherent sheaf $\cG \in \cg X$, an open subscheme $j: U
\hto X$, and a subsheaf $\cF_1 \subset \cG|_U$ with $\cF_1 \in \q\cgl Uw$. 
By replacing $U$ by a smaller open subscheme, we may assume that $\cF_1 \in
\cgl U{q(x)+w}$, where $x$ is a generic point of $U$.  Then $\cF_1$ is a
subsheaf of $\sigma_{\le q(x)+w}\cG|_U$, and standard arguments show that
there is a subsheaf $\cF \subset \sigma_{\le q(x)+w}\cG$ supported on
$\overline U$ such that $\cF|_U
\cong \cF_1$.  The monotonicity assumption then implies that $\cF \in
\q\cgl Xw$.  This reasoning can be substituted for
invocations of
Lemma~\ref{lem:q-ext} for $\q\cgl Xw$.  Similarly, if $q$ is
\emph{comonotone}, meaning that $\dualq$ is monotone, then the reasoning
above can replace invocations of Lemma~\ref{lem:q-ext} for the category
$\cheq\cgl Xw$.  The proof of Theorem~\ref{thm:main} uses
Lemma~\ref{lem:q-ext} in both these ways.

We thus obtain the following
result: suppose $X$ is a scheme satisfying the assumptions of
Section~\ref{sect:stagt}, equipped with an $s$-structure. In particular, we
do not assume that $G$ acts with finitely orbits, or that the $s$-structure
is recessed.  If $q: X^\gen \to \Z$ is both monotone and comonotone, then
the collection of subcategories $(\{\q\dsl Xw\}, \{\q\dsg Xw\})_{w \in \Z}$
is a bounded, nondegenerate HLR baric structure on $X$.
\end{rmk}

\section{Multiplicative Baric Structures and $s$-structures}
\label{sect:mult}

In this section we study the relationship between multiplicative baric
structures on the triangulated category $\dgb X$ and $s$-structures on the
abelian category $\cg X$.  The authors had originally hoped that under
appropriate conditions the two notions would be equivalent, and that the
developments in sections \ref{sect:stagt} and \ref{sect:baric-coh2} could
be simplified by replacing the latter concept with the former.  In other
words, the hope was that there would be a one-to-one correspondence between
multiplicative HLR baric structures and $s$-structures on a $G$-scheme $X$.

This turns out to be not quite correct.  Rather, we prove here that there is a one-to-one correspondence between multiplicative baric structures and a certain class of \emph{pre}-$s$-structures, including all $s$-structures.  (A pre-$s$-structure is a collection of subcategories of $\cg X$ satisfying the first six of the ten axioms for an $s$-structure in~\cite{a}.)  It would be interesting to look for an additional axiom on multiplicative baric structures that is satisfied precisely by those baric structures corresponding to $s$-structures, but we have not pursued this here.

We say that a baric structure $\schemebaric X$ is \emph{multiplicative} if
either of the following two equivalent conditions holds:
\begin{enumerate}
\item If $\cF \in \dsl Xw$ and $\cG \in \dsl Xv$, then $\cF \Lotimes \cG
\in \dsml X{w+v}$.
\item If $\cF \in \dsl Xw$ and $\cG \in \dsg Xv$, then $\cRHom(\cF,\cG)
\in \dspg X{v-w}$.
\end{enumerate}

\begin{thm}\label{thm:mult}
Suppose $\schemebaric X$ is a multiplicative baric structure on $X$.  Then
the categories
\begin{align*}
\cgl Xw &= \cg X \cap \dsl Xw, \\
\cgg Xw &= \{ \cF \in \cg X \mid \text{$\Hom(\cG,\cF) = 0$ for all $\cG
\in \cgl X{w-1}$} \}
\end{align*}
constitute a pre-$s$-structure on $X$.

Conversely, given an $s$-structure $(\{\cgl Xw\}, \{\cgg Xw\})_{w\in\Z}$ on
a
scheme $X$ with finitely many $G$-orbits, the categories
\begin{align*}
\dsl Xw &= \{ \cF \in \dgb X \mid \text{$h^k(\cF) \in \cgl Xw$ for all $k
\in \Z$} \}, \\
\dsg Xw &= \{ \cF \in \dgb X \mid \text{$\Hom(\cG,\cF) = 0$ for all $\cG
\in \dsl X{w-1}$} \}
\end{align*}
constitute a multiplicative baric structure on $X$.
\end{thm}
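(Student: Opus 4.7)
My plan is to address the two directions separately. In both, the existence axiom (canonical subsheaf in one direction, distinguished triangle in the other) is where the real work lies; the monotonicity, orthogonality, and thickness conditions will follow formally from the compatibility of the baric structure with the standard $t$-structure together with Lemma~\ref{lem:compat}.

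\emph{From multiplicative baric to pre-$s$.} I would first invoke Lemma~\ref{lem:compat}\eqref{it:ltserre} to see that $\cgl Xw = \cg X \cap \dsl Xw$ is a Serre subcategory of $\cg X$; the containments $\cgl Xw \subset \cgl X{w+1}$ and $\cgg Xw \supset \cgg X{w+1}$ and the orthogonality of $\cgl X{w-1}$ against $\cgg Xw$ are then built in. For the truncation, given $\cF \in \cg X$ I would apply the baric triangle $\beta_{\le w}\cF \to \cF \to \beta_{\ge w+1}\cF \to$. By left $t$-exactness both outer terms lie in $\dgbg X0$, so the resulting long exact sequence reads
\[
0 \to h^0(\beta_{\le w}\cF) \to \cF \to h^0(\beta_{\ge w+1}\cF) \to h^1(\beta_{\le w}\cF) \to 0.
\]
I would then set $\sigma_{\le w}\cF = h^0(\beta_{\le w}\cF)$, which lies in $\cgl Xw$ by Lemma~\ref{lem:compat}\eqref{it:lth}, and check that $\cF/\sigma_{\le w}\cF$ embeds in $h^0(\beta_{\ge w+1}\cF)$; the latter lies in $\cgg X{w+1}$ because $\Hom(\cG, h^0(\beta_{\ge w+1}\cF)) = \Hom(\cG, \beta_{\ge w+1}\cF) = 0$ for $\cG \in \cgl Xw$, and $\cgg X{w+1}$ is closed under subobjects (as a right orthogonal). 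Finally, the pre-$s$-structure multiplicativity axiom follows from the multiplicativity of the baric structure by taking $h^0$ of $\cF \Lotimes \cG$ and appealing once more to Lemma~\ref{lem:compat}\eqref{it:lth}.

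\emph{From $s$-structure to multiplicative baric.} Thickness of $\dsl Xw$ will follow from the Serre property of $\cgl Xw$, and thickness of $\dsg Xw$, the containments, and orthogonality are immediate from the definitions. The main step is the distinguished-triangle axiom. Here I would repeat the argument of Proposition~\ref{prop:baric-dt} with the constant function $q \equiv 0$, using noetherian induction on $\supp \cF$ together with an inner induction on the number of nonzero cohomology sheaves. For a single sheaf $\cF$ and an open orbit $C \subset X$, I would take $\cF_1 = \sigma_{\le w}\cF$ directly from the $s$-structure truncation; the quotient $\cF/\cF_1$ would be treated via its Grothendieck dual along the same lines, and the pieces assembled by the $9$-lemma exactly as in Proposition~\ref{prop:baric-dt}. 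Compatibility with the standard $t$-structure is automatic from this construction. Multiplicativity of the resulting baric structure then reduces, by thickness and a d\'evissage using the distinguished-triangle axiom just established, to the case of sheaves concentrated in a single degree, where it is a direct consequence of the corresponding multiplicativity axiom of an $s$-structure.

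The main obstacle will be reworking the distinguished-triangle construction of Proposition~\ref{prop:baric-dt} without the recessed hypothesis used there: I need to verify that in the specialized case $q \equiv 0$ the $s$-structure truncation functors $\sigma_{\le w}$ substitute cleanly for Lemma~\ref{lem:q-ext} on both sides of the construction, including the Grothendieck-dual half where the altitude-dependence made the original argument slightly delicate.
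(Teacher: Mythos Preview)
Your first direction is essentially the paper's, though you construct $\sigma_{\le w}\cF$ explicitly from the baric triangle where the paper simply invokes noetherianness to find a maximal $\cgl Xw$-subobject; either works. (You do not mention axiom~(S5), that every sheaf lies in some $\cgl Xw$ and some $\cgg Xv$; the paper deduces this from boundedness of the baric structure.)

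For the converse, the paper takes a much shorter route than yours. Rather than rerunning Proposition~\ref{prop:baric-dt}, it observes that for the constant function $q = \bz$ one has $\lbz\cgl Xw = \cgl Xw$ (by the gluing theorem for $s$-structures from~\cite{a}), so the categories in the statement are exactly those of Theorem~\ref{thm:main} with $q = \bz$. The baric-structure axioms and compatibility with the standard $t$-structure are then already established, and multiplicativity is read off from Proposition~\ref{prop:baric-mult}. Your plan to redo the distinguished-triangle construction would also succeed under the standing recessed hypothesis of Section~\ref{sect:baric-coh2} (which remains in force in Section~\ref{sect:mult}), so your worry about evading that hypothesis is misplaced; but the plan duplicates work already done.

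There is, however, a genuine gap in your multiplicativity sketch. After reducing to sheaves $\cF \in \cgl Xw$ and $\cG \in \cgl Xv$, you still need $h^{-k}(\cF \Lotimes \cG) \in \cgl X{w+v}$ for every $k \ge 0$, whereas the $s$-structure axiom only gives this for $k = 0$, i.e., for the ordinary tensor product $\cF \otimes \cG = h^0(\cF \Lotimes \cG)$. The higher Tor sheaves are not controlled by a d\'evissage in cohomological degree alone. Proposition~\ref{prop:baric-mult} handles this by noetherian induction: over an open orbit every coherent sheaf is locally free, so $\Lotimes$ is $t$-exact there and the claim reduces to the axiom; the complement is handled inductively via the usual restriction exact sequence. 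You would need to supply this argument (or simply invoke Proposition~\ref{prop:baric-mult}) to finish.
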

\begin{proof}
Suppose first that $\schemebaric X$ is a multiplicative baric structure on
$X$.  To show that the categories above constitute a pre-$s$-structure, we
must verify axioms (S1)--(S6) from~\cite{a}.  (The reader is referred
to~\cite{a} for the statements of these axioms.)

Axioms (S2) and (S3) are
clear from the definitions, and axiom (S1) follows from the fact that
$\schemebaric X$ is compatible with the standard $t$-structure.  

Let us prove axiom (S4).  Let $\cF$ be an object of $\cg X$.  Since $\cF$
is noetherian, and
$\cgl Xw$ is a Serre subcategory, there is a largest subobject $\cF'
\subset \cF$ belonging 
to $\cgl Xw$.  Then $\cF/\cF'$ must belong to $\cgg X{w+1}$: otherwise,
there is a nonzero
map $\cG \to \cF/\cF'$ whose image $\cI \neq 0$ belongs to $\cgl Xw$, but
the inverse image of $\cI$ in $\cF$ contains the maximal $\cF'$.

Axiom (S5) follows from the fact that the baric structure on $\dgb X$ is
bounded, and Axiom (S6) follows from the multiplicativity of the baric
structure and the fact that for $\cF, \cG \in \cg X$, we have $\cF \otimes
\cG \cong h^0(\cF \Lotimes \cG)$.

Now, suppose we are given an $s$-structure $(\{\cgl Xw\}, \{\cgg
Xw\})_{w \in \Z}$. 
Let $\bz$ denote the constant function $X^\gen \to \Z$ of value $0$.
We claim that $\lbz\cgl Xw = \cgl Xw$.  It is clear from the definition
that $\cgl Xw \subset \lbz \cgl Xw$.  Conversely, if $x \in X^\gen$ is a
generic point of the support of an object $\cF \notin \cgl Xw$, it follows
from the gluing theorem for $s$-structures~\cite[Theorem~5.3]{a} that
there is no open subscheme $V \subset \barGx$ such that the restriction of
$\cF$ to $V$ lies in $\cgl Vw$, so $\cF \notin \lbz \cgl Xw$.  Since
$\lbz\cgl Xw = \cgl Xw$, we see that the categories $\schemebaric X$
defined in the statement of the theorem coincide with the baric structure
constructed in Theorem~\ref{thm:main} by taking $q = \bz$.  The fact that
this baric structure is multiplicative is a consequence of
Proposition~\ref{prop:baric-mult} below.
\end{proof}

\begin{prop}\label{prop:baric-mult}
Let $X$ be a scheme with finitely many $G$-orbits, and let $p,q: X^\gen
\to \Z$ be functions satisfying~\eqref{eqn:g-const}.  Suppose $\cF \in \pl\dsml Xw$.
\begin{enumerate}
\item If $\cG \in \q\dsml Xv$, then $\cF \Lotimes
\cG \in {}_{p+q}\dsml X{w+v}$.
\item If $\cG \in \q\dspg Xv$, then
$\cRHom(\cF,\cG) \in {}_{\hat p + q}\dspg X{v-w}$.
\end{enumerate}
\end{prop}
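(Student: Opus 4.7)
The plan is to prove part (1) by a spectral-sequence reduction to a sheaf-level statement about Tors, then to deduce part (2) from part (1) by Serre--Grothendieck duality. The heart of part (1) is the sheaf-level claim that for $\cF' \in \pl\cgl Xw$, $\cG' \in \q\cgl Xv$, and every $r \ge 0$, we have $\mathrm{Tor}_r^{\cO_X}(\cF',\cG') \in {}_{p+q}\cgl X{w+v}$. Granting this, the standard hyper-Tor spectral sequence
\[
E_2^{-r,s} = \bigoplus_{i+j=s} \mathrm{Tor}_r^{\cO_X}(h^i(\cF), h^j(\cG)) \Rightarrow h^{s-r}(\cF \Lotimes \cG),
\]
together with the Serre property of the target category (Lemma~\ref{lem:qcgl-serre}), forces $h^k(\cF \Lotimes \cG) \in {}_{p+q}\cgl X{w+v}$ for every $k$, which is exactly the condition for $\cF \Lotimes \cG \in {}_{p+q}\dsml X{w+v}$.

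To verify the sheaf-level claim, I would fix $x \in X^\gen$ with closed subscheme structure $i: Z \hookrightarrow X$ on $\barGx$, taken reduced via the equivalent interpretations of $\q\cgl$ discussed after its definition, so that $\cO_{Z,\xi}$ at the generic point $\xi$ is a field. The key input is the identity $Li^*(\cF' \Lotimes_X \cG') \cong Li^*\cF' \Lotimes_Z Li^*\cG'$: after shrinking to a dense open $V \subset Z$ via Lemma~\ref{lem:qcgl-res}, one arranges $h^a(Li^*\cF')|_V \in \cgl V{w+p(x)}$ and $h^b(Li^*\cG')|_V \in \cgl V{v+q(x)}$ for all relevant $a,b$. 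Since the positive-degree $\mathrm{Tor}^{\cO_V}$ sheaves appearing in the tensor-product spectral sequence over $\cO_V$ have vanishing stalk at $\xi$, they vanish on a further dense subopen of $V$; the spectral sequence collapses to its $r=0$ row, and axiom (S6) of the $s$-structure yields $h^k(Li^*\cF'|_V \Lotimes_V Li^*\cG'|_V) \in \cgl V{w+v+p(x)+q(x)}$ for every $k$.

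The remaining step is to transfer this bound on $h^k(Li^*(\cF'\Lotimes_X\cG'))|_V$ back to $i^*h^k(\cF'\Lotimes_X\cG')|_V = i^*\mathrm{Tor}_{-k}(\cF',\cG')|_V$. I would do this by downward induction on $k$ using the hyperderived-pullback spectral sequence $L^a i^* h^b(\cM) \Rightarrow h^{a+b}(Li^*\cM)$ applied to $\cM = \cF'\Lotimes_X\cG'$. The base case $k=0$ is immediate from (S6). For $k<0$, the induction hypothesis places $h^{k+r}(\cM) \in {}_{p+q}\cgl X{w+v}$ for every $r \ge 1$ with $k+r \le 0$, so Lemma~\ref{lem:qcgl-res} bounds their higher pullbacks $L^r i^* h^{k+r}(\cM)|_V$ in $\cgl V{w+v+p(x)+q(x)}$. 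Combined with the already-established bound on $h^k(Li^*\cM)|_V$ and closure of $\cgl V{w+v+p(x)+q(x)}$ under extensions, this forces $i^*h^k(\cM)|_V$ into $\cgl V{w+v+p(x)+q(x)}$ as well.

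Part (2) then follows from part (1) by Serre--Grothendieck duality. The adjunction gives $\cRHom(\cF,\cG) \cong \D(\cF \Lotimes \D\cG)$, and $\cG \in \q\dspg Xv$ means $\D\cG \in \cheq\dsml X{-v}$ by definition. Applying part (1) to $\cF \in \pl\dsml Xw$ and $\D\cG \in \cheq\dsml X{-v}$ gives $\cF \Lotimes \D\cG \in {}_{p+\dualq}\dsml X{w-v}$; applying $\D$ and unwinding the defining relation ${}_f\dspg X\alpha = \D({}_{\hat f}\dsml X{-\alpha})$ then places $\cRHom(\cF,\cG)$ in the appropriate $\dspg$ category with the dualized function. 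The main obstacle throughout is the transfer step in part (1): the relationship between $i^*h^k(\cM)$ and $h^k(Li^*\cM)$ is mediated by a spectral sequence rather than a direct subquotient inclusion, so the inductive spectral-sequence argument using Lemma~\ref{lem:qcgl-res} at each step is essential to bridge the gap.
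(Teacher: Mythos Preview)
Your approach to part~(2) is identical to the paper's: both use $\cRHom(\cF,\cG) \cong \D(\cF \Lotimes \D\cG)$ and reduce immediately to part~(1).

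For part~(1), your route is genuinely different from the paper's. The paper proves the Hom-vanishing characterization: it shows $\Hom(\cF \Lotimes \cG, \cH) = 0$ for every $\cH \in {}_{p+q}\dsg X{w+v+1}$ by noetherian induction, using the exact sequence over closed subscheme structures on $X \smallsetminus C$ for an open orbit $C$. The only local computation is over the single orbit $C$, where every coherent sheaf is locally free, so $\Lotimes$ is exact and $h^r((\cF \Lotimes \cG)|_C) \cong \bigoplus_{i+j=r} h^i(\cF|_C) \otimes h^j(\cG|_C)$; axiom~(S6) finishes immediately. By contrast, you verify the defining condition on cohomology sheaves directly at every $x \in X^\gen$, using Lemma~\ref{lem:qcgl-res} and a spectral-sequence transfer argument. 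Your method is more self-contained (it does not invoke the baric-structure orthogonality of Proposition~\ref{prop:baric-orth}), but it is considerably longer, and the paper's use of the open orbit sidesteps all of your Tor-vanishing and transfer-step bookkeeping at once.

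One technical caution: the single ``hyper-Tor spectral sequence'' $E_2^{-r,s} = \bigoplus_{i+j=s} \mathrm{Tor}_r(h^i\cF, h^j\cG) \Rightarrow h^{s-r}(\cF \Lotimes \cG)$ you invoke is not one of the two standard hypercohomology spectral sequences, and for complexes unbounded below its existence and convergence are not automatic. The clean fix is to iterate: first reduce (for a fixed cohomological degree $k$) to bounded complexes by truncating $\cF$ and $\cG$ far below, then run the filtration-by-truncation spectral sequence in one variable to reduce to $\cG$ a sheaf, then again to reduce to $\cF$ a sheaf. Each step lands in the Serre subcategory ${}_{p+q}\cgl X{w+v}$ by Lemma~\ref{lem:qcgl-serre}, and only finitely many terms contribute in each fixed degree, so convergence is unproblematic. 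With that adjustment your argument goes through.
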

\begin{proof}
(1)~We will show by noetherian induction that $\Hom(\cF \Lotimes \cG, \cH)
= 0$ for all $\cH \in {}_{p+q}\dsg X{w+v+1}$.  Assume the result is known
for all proper closed subschemes of $X$, and let $C \subset X$ be an open
orbit.  Let $Z$ denote the closed subset $X \ssm C$, and consider the exact
sequence
\[
\lim_{\substack{\to \\ Z'}} \Hom(Li^*(\cF \Lotimes \cG),
Ri^!\cH) \to
\Hom(\cF \Lotimes \cG, \cH) \to \Hom((\cF \Lotimes \cG)|_C,\cH|_C),
\]
where $i': Z' \hto X$ ranges over all closed subscheme structures on $Z$. 
Now, $Li^*(\cF \Lotimes \cG) \cong Li^*\cF \Lotimes Li^*\cG$.  We have
$Li^*\cF \in \pl\dsml {Z'}w$ and $Li^*\cG \in \q\dsml {Z'}v$ by
Lemma~\ref{lem:baric-res}, and then $Li^*\cF \Lotimes Li^*\cG \in
{}_{p+q}\dsml {Z'}{w+v}$ by assumption.  We also have $Ri^!\cH \in
{}_{p+q}\dspg {Z'}{w+v+1}$, so the first term above clearly vanishes.

It now suffices to show that $(\cF \Lotimes \cG)|_C \in {}_{p+q}\dsml
C{w+v}$: that implies the vanishing of the last term in the exact sequence
above, and hence of the middle term as well.  Recall that on a
single $G$-orbit, the tensor product functor is exact (because all objects
of $\cg C$ are locally free), so there is a natural isomorphism
\[
h^r((\cF \Lotimes \cG)|_C) \cong \bigoplus_{i+j = r} h^i(\cF|_C) \otimes
h^j(\cG|_C).
\]
We know that $h^i(\cF|_C) \in \cgl C{w+p(C)}$ for all $i$, and that
$h^j(\cG|_C) \in \cgl C{v+q(C)}$ for all $j$.  It follows that $h^r((\cF
\Lotimes \cG)|_C) \in \cgl C{w+v+p(C)+q(C)}$ for all $r$, and hence that
$(\cF \Lotimes \cG)|_C \in {}_{p+q}\dsml C{w+v}$, as desired.

(2)~Consider $\D\cG \in \cheq\dsml X{-v}$.  By part~(1), $\cF
\Lotimes \D\cG \in {}_{p + \dualq}\dsml X{w-v}$.  Since $\cRHom(\cF,\cG)
\cong \D(\cF \Lotimes \D\cG)$, the result follows.
\end{proof}

\section{Staggered Sheaves}
\label{sect:stag2}

In this section, we retain the assumptions that $G$ acts on $X$ with
finitely many orbits, and that $X$ is equipped with a recessed
$s$-structure.

Given a function $q: X^\gen \to \Z$, let us define full subcategories of
$\dgm X$ and $\dgp X$ as follows:
\begin{align*}
\uq\dgml X0 &= \{ \cF \in \dgm X \mid \text{$h^k(\cF) \in \q\dsl X{-k}$ for
all $k \in \Z$} \}, \\
\uq\dgpg X0 &= \{ \cG \in \dgp X \mid \text{$\Hom(\cF[1],\cG) = 0$ for all
$\cF \in \uq\dgml X0$}\}.
\end{align*}
We also define bounded versions of these categories:
\[
\uq\dgbl X0 = \uq\dgml X0 \cap \dgb X
\quad\text{and}\quad
\uq\dgbg X0 = \uq\dgpg X0 \cap \dgb X.
\]
Let $\cG \in \uq\dgpg X0$.  There is some integer $n$ such that $\cG \in
\dgpg Xn$, and then for any $\cF \in \dgm X$, we have $\Hom(\cF[1],\cG)
\cong \Hom(\tau^{\ge n}(\cF[1]),\cG)$, with $\tau^{\ge n}(\cF[1]) \in \dgb
X$.  Thus, the definition of $\uq\dgpg X0$ could be changed to require
$\Hom(\cF[1],\cG)$ to vanish only when $\cF \in \uq\dgbl X0$.  By
Proposition~\ref{prop:compat}\eqref{it:ssorth}, it follows that
\[
\uq\dgbg X0 = \{ \cG \in \dgb X \mid \text{$\q\beta_{\le k}\cG \in \dgbg
X{-k}$ for all $k \in \Z$} \}.
\]
The categories $\uq\dgbl X0$ and $\uq\dgbg X0$ are none other than the
categories associated in Definition~\ref{defn:stag} to the standard
$t$-structure on $\dgb X$ with the baric structure $(\{\q\dsl Xw\},
\{\q\dsg Xw\})_{w\in \Z}$.

\begin{thm}\label{thm:stag-coh}
The categories $(\uq\dgbl X0,\uq\dgbg X0)$ constitute a bounded,
nondegenerate $t$-structure on $\dgb X$.
\end{thm}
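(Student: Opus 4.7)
The plan is to deduce this from Theorem~\ref{thm:stag-gen}. By Theorem~\ref{thm:main} the baric structure $(\{\q\dsl Xw\},\{\q\dsg Xw\})_{w\in\Z}$ is bounded, nondegenerate, and compatible with the standard $t$-structure on $\dgb X$, which itself is bounded and nondegenerate; and from the discussion preceding the theorem, $\uq\dgbl X0$ and $\uq\dgbg X0$ are precisely the categories $\sD^{\le 0}$ and $\sD^{\ge 0}$ associated by Definition~\ref{defn:stag}. The only missing input for Theorem~\ref{thm:stag-gen} is therefore an auxiliary function $\mu\colon \dgb X \to \N$ meeting its two axioms.

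To define $\mu$ I would exploit the finiteness of $G$-orbits: enumerate them $C_1,\ldots,C_r$, fix a generic point $x_i\in C_i$, and for a coherent sheaf $\cG$ put $\ell_i(\cG)=\mathrm{length}_{\cO_{X,x_i}}(\cG_{x_i})$, a nonnegative integer which is additive on short exact sequences and vanishes exactly when $x_i\notin\supp\cG$. Set
\[
\mu(\cF)=\sum_{i=1}^{r}\sum_{k\in\Z}\ell_i(h^k(\cF)).
\]
This is a finite nonnegative integer because $\cF$ has boundedly many coherent cohomology sheaves. Condition~(1) of Theorem~\ref{thm:stag-gen} holds: a coherent sheaf whose stalks vanish at every orbit generic point must be zero, because its support is a $G$-stable closed subset and any nonempty such set contains at least one orbit.

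For condition~(2), let $n=\min\{k\mid h^k(\cF)\ne 0\}$ and analyze the triangle $\q\beta_{\le -n}\cF\to\cF\to\q\beta_{\ge -n+1}\cF\to$. Left $t$-exactness of $\q\beta_{\ge -n+1}$ (compatibility) puts $\q\beta_{\ge -n+1}\cF$ in $\dgbg Xn$, so the long exact cohomology sequence yields $h^n(\q\beta_{\le -n}\cF)\hookrightarrow h^n(\cF)$ and expresses each $h^k(\q\beta_{\le -n}\cF)$, $k>n$, in terms of $h^k(\cF)$ and $h^{k-1}(\q\beta_{\ge -n+1}\cF)$. Localizing at $x_i$ is exact, so the standard Euler-characteristic identity on lengths applies at each stalk; since $h^n(\cF)\ne 0$ there is some $i$ with $\ell_i(h^n(\cF))>0$, and that contribution is killed when we pass to $\tau^{\ge n+1}\q\beta_{\le -n}\cF$.

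The main obstacle will be to show that this drop is not cancelled by ``new'' length appearing in $h^k(\q\beta_{\le -n}\cF)$ for $k>n$ via the connecting maps from $\q\beta_{\ge -n+1}\cF$. If the straight sum above fails to strictly decrease for this reason, I would promote $\mu$ to a weighted/lexicographic variant, namely $\sum_i M^{r-i}\sum_k\ell_i(h^k(\cF))$ for a fixed large $M$ and a total ordering of the orbits refining specialization (larger closures first), and combine this with a secondary noetherian induction to reduce the strict-decrease check to a single-orbit computation. On a single orbit the baric truncation is exact on coherent sheaves, so the redistribution of length is completely transparent, and the leading-term decrease witnessed by some orbit in $\supp h^n(\cF)$ is enough to force $\mu(\tau^{\ge n+1}\q\beta_{\le -n}\cF)<\mu(\cF)$. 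With $\mu$ in hand, Theorem~\ref{thm:stag-gen} then delivers both the truncation triangles and the boundedness and nondegeneracy claims at once.
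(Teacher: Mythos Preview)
Your overall strategy---reduce to Theorem~\ref{thm:stag-gen} and construct a suitable $\mu$---is the same as the paper's.  The difficulty is entirely in the choice of $\mu$, and your proposed invariant has a fatal defect before any of the ``new length'' issues arise: it is not well defined.  For a non-open orbit $C_i$, the point $x_i$ is not a generic point of $X$, so the local ring $\cO_{X,x_i}$ has positive Krull dimension, and for a general coherent sheaf $\cG$ the stalk $\cG_{x_i}$ is a finitely generated $\cO_{X,x_i}$-module of \emph{infinite} length.  (For instance, take $X = \mathbb{A}^1$ with the scaling action of $\mathbb{G}_m$, $C_2 = \{0\}$, and $\cG = \cO_X$: the stalk at the origin is the DVR $k[t]_{(t)}$ itself.)  So $\ell_i(\cG)$ is typically $+\infty$, and $\mu$ does not take values in $\N$.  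Restricting to open orbits would make each $\ell_i$ finite but would destroy axiom~(1): a nonzero sheaf supported on the complement of the open orbits would have $\mu = 0$.  Your lexicographic weighting and the ``baric truncation is exact on a single orbit'' remark do not touch this problem, and the latter does not help with stalks at non-open orbit points anyway, since restricting to a closed orbit is not exact.

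The paper sidesteps length entirely.  It sets
\[
m(\cF) = \min\{k : h^k(\cF)\ne 0\},\qquad
\mu(\cF) = C + 1 - m(\cF) - m(\D\cF),
\]
where $C = \max_Z \cod Z$.  Positivity follows from the amplitude bound $\D(\dgbg Xn)\subset \dgbl X{C-n}$.  The strict decrease comes in two steps: $m(\tau^{\ge n+1}\q\beta_{\le -n}\cF) \ge n+1 > m(\cF)$ is immediate, while $m(\D\tau^{\ge n+1}\q\beta_{\le -n}\cF) \ge m(\D\cF)$ uses the identification $\D\q\beta_{\le -n}\cF \cong \cheq\beta_{\ge n}\D\cF$ (so $m$ does not drop under $\D\q\beta_{\le -n}$, by left $t$-exactness of the dual baric truncation) together with Corollary~\ref{cor:stag-trunc}, which says that the dual-standard category $\D(\dgbg Xk)$ is stable under $\tau^{\ge n+1}$.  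The point is that duality converts the awkward baric truncation on one side into a $t$-exact operation on the other, and the only nontrivial input beyond compatibility is the stability statement of Corollary~\ref{cor:stag-trunc}.
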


\begin{defn}
The $t$-structure $(\uq\dgbl X0, \uq\dgbg X0)$ is called the
\emph{staggered $t$-structure of perversity $q$.}  Its heart, denoted
$\uq\cM(X)$, is the category of \emph{staggered sheaves}.
\end{defn}

This terminology and notation is consistent with that of~\cite{a} by
Lemma~\ref{lem:dgml}.  That is, if $q$ happens to be a perversity function
in the sense of~\cite{a}, then the $t$-structure constructed here coincides
with the $t$-structure associated to $q$ in~\cite{a}.  However, neither
this theorem nor the main result of~\cite{a} encompasses the other:
in~\cite{a}, no assumptions were made on the number of orbits or the
$s$-structure; here, no restrictions are imposed on the function $q: X^\gen
\to \Z$.

Note that if $q$ happens to be a perversity function in the sense
of~\cite{a}, then Theorem~\ref{thm:stag-coh} follows immediately from
Lemma~\ref{lem:dgml}, but it general, Theorem~\ref{thm:stag-coh} produces
$t$-structures that are not given by the construction of~\cite{a}.

\begin{proof}[Proof of Theorem~\ref{thm:stag-coh}]
We will prove this theorem by invoking Theorem~\ref{thm:stag-gen}.  To that
end, we must define an invariant $\mu(\cF)$ for any object $\cF \in \dgb X$
satisfying the hypotheses of that theorem.  For any nonzero object $\cF \in
\dgb X$, let
\[
m(\cF) = \min \{ k \in \Z \mid h^k(\cF) \ne 0 \}.
\]
Let $C$ be the maximum value of $\cod Z$ as $Z$ ranges over all closed
subschemes of $X$.  (Of course, $\cod Z$ takes only finitely many distinct
values, since $X$ has finite Krull dimension.)  Note that $\D(\dgbg X0)
\subset \dgbl XC$, and, more generally, $\D(\dgbg Xn) \subset \dgbl
X{C-n}$.  Let
\[
\mu(\cF) =
\begin{cases}
C + 1 - m(\cF) - m(\D\cF) & \text{if $\cF \ne 0$,} \\
0 & \text{if $\cF = 0$.}
\end{cases}
\]
We first prove that $\mu(\cF) > 0$ whenever $\cF \ne 0$.  If $m(\cF) = n$,
then $\cF \in \dgbg Xn$, so $\D\cF \in \dgbl X{C-n}$, and in particular,
$m(\D\cF) \le C-n$.  It follows that
\[
\mu(\cF) = C + 1 - m(\cF) - m(\D\cF) \ge C + 1 - n - (C-n) = 1,
\]
as desired.

Next, the left $t$-exactness of $\q\beta_{\le -n}$ implies that if $m(\cF)
= n$, then $\q\beta_{\le -n}\cF \in \dgbg Xn$, so $m(\q\beta_{\le -n}\cF)
\ge n$.  Now, consider the distinguished triangle
\[
\D\q\beta_{\ge -n+1}\cF \to \D\cF \to \D\q\beta_{\le -n}\cF \to.
\]
Since $\D\q\beta_{\ge -n+1}\cF \in \cheq\dsl X{n-1}$ and $\D\q\beta_{\le
-n}\cF \in \cheq\dsg Xn$, we have canonical isomorphisms
\[
\D\q\beta_{\ge -n+1}\cF \cong \cheq\beta_{\le n-1}\D\cF
\qquad\text{and}\qquad
\D\q\beta_{\le -n}\cF \cong \cheq\beta_{\ge n}\D\cF.
\]
Now, the left $t$-exactness of $\cheq\beta_{\ge n}$ shows that
$m(\D\q\beta_{\le -n}\cF) \ge m(\D\cF)$.

Finally, consider $\tau^{\ge n+1}\q\beta_{\le -n}\cF$.  Clearly,
\[
m(\tau^{\ge n+1}\q\beta_{\le -n}\cF) \ge n+1 > m(\cF).
\]
Now, let $k = m(\D\q\beta_{\le -n}\cF)$.  By definition, $\q\beta_{\le
-n}\cF \in \D(\dgbg Xk)$.  The $t$-structure $(\D(\dgbg Xk), \D(\dgbl
Xk))$, which is dual to (a shift of) the standard $t$-structure, is an
example of a perverse coherent $t$-structure~\cite{bez:pcs}, and therefore
of a staggered $t$-structure in the sense of~\cite{a}, so
Corollary~\ref{cor:stag-trunc} tells us that $\D(\dgbg Xk)$ is stable under
$\tau^{\ge n+1}$.  In particular, $\tau^{\ge n+1}\q\beta_{\le -n}\cF \in
\D(\dgbg Xk)$, so
\[
m(\D\tau^{\ge n+1}\q\beta_{\le -n}\cF) \ge k = m(\D\q\beta_{\le -n}\cF) \ge
m(\D\cF).
\]
We conclude that if $m(\cF) = n$, then $\mu(\tau^{\ge n+1}\beta_{\le
-n}\cF) < \mu(\cF)$.  Thus, the hypotheses of Theorem~\ref{thm:stag-gen}
are satisfied.
\end{proof}

\begin{rmk}\label{rmk:ineq2}
If $G$ does not act with finitely many orbits, or if
the $s$-structure is not recessed, Remark~\ref{rmk:ineq} tells us that
$(\{\q\dsl Xw\}, \{\q\dsg Xw\})_{w\in\Z}$ still constitutes a baric
structure if we require $q$ to be monotone and comonotone.   The proof of
Theorem~\ref{thm:stag-coh} goes through in this setting.  However, the
conditions imposed on $q$ are more restrictive than the conditions imposed
on perversity functions in~\cite{a}, so in this case, the theorem we
obtain 
is actually just a special case of~\cite[Theorem~7.4]{a}.  Similar remarks
apply to Theorems~\ref{thm:stag-dual} and~\ref{thm:stag-fl} below; {\it
cf.}~\cite[Theorems~9.7 and~9.9]{a}.
\end{rmk}

Next, we study how the duality functor $\D$ interacts with the staggered
$t$-structure.  Let $j: U \hto X$ be an open subscheme.  The following
functions defined in terms of $q: X^\gen \to \Z$ will be useful in the
sequel.
\begin{equation}\label{eqn:qpm}
\ufl q(x) =
\begin{cases}
q(x) & \text{if $x \in U^\gen$,} \\
q(x) - 1 & \text{if $x \notin U^\gen$,}
\end{cases}
\qquad
\ush q(x) =
\begin{cases}
q(x) & \text{if $x \in U^\gen$,} \\
q(x) + 1 & \text{if $x \notin U^\gen$.}
\end{cases}
\end{equation}

\begin{lem}\label{lem:stag-res}
Let $j: U \hto X$ be the inclusion of an open subscheme, and $i: Z \hto X$
the inclusion of a closed subscheme.  Then:
\begin{enumerate}
\item $j^*$ takes $\uq\dgml X0$ to $\uq\dgml U0$ and $\uq\dgpg X0$ to
$\uq\dgpg U0$.
\item $Li^*$ takes $\uq\dgml X0$ to $\uq\dgml Z0$.
\item $Ri^!$ takes $\uq\dgpg X0$ to $\uq\dgpg Z0$.
\item $i_*$ takes $\uq\dgml Z0$ to $\uq\dgml X0$ and $\uq\dgpg Z0$ to
$\uq\dgpg X0$.
\end{enumerate}
\end{lem}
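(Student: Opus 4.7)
The plan is to handle the four statements in increasing order of difficulty: (1) first half and (4) first half are immediate from exactness, (2) uses a truncation trick, (3) and (4) second half follow by adjunction, and (1) second half is the main technical hurdle.

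For parts (1) first half and (4) first half, since $j^*$ and $i_*$ are both exact, they commute with the cohomology functors $h^k$, and Lemma~\ref{lem:baric-res} transfers the condition $h^k(\cF) \in \q\cgl X{-k}$ to $\q\cgl U{-k}$ or $\q\cgl X{-k}$ on the target. For part (2), given $\cF \in \uq\dgml X0$, I will consider the truncation triangle $\tau^{\le k-1}\cF \to \cF \to \tau^{\ge k}\cF \to$ and observe that $\tau^{\ge k}\cF \in \q\dsml X{-k}$, since its cohomology sheaves lie in $\q\cgl X{-r} \subset \q\cgl X{-k}$ for $r \ge k$. Applying Lemma~\ref{lem:baric-res}(2) yields $Li^*\tau^{\ge k}\cF \in \q\dsml Z{-k}$, while right $t$-exactness of $Li^*$ forces $Li^*\tau^{\le k-1}\cF \in \dgml Z{k-1}$; together these give $h^k(Li^*\cF) \cong h^k(Li^*\tau^{\ge k}\cF) \in \q\cgl Z{-k}$. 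Parts (3) and (4) second half then follow via the adjunctions $(i_*, Ri^!)$ and $(Li^*, i_*)$: one rewrites $\Hom(\cF[1], Ri^!\cG)$ as $\Hom(i_*\cF[1], \cG)$ and $\Hom(\cF[1], i_*\cG)$ as $\Hom(Li^*\cF[1], \cG)$, both of which vanish by the definition of $\uq\dgpg$ on $X$ or $Z$ combined with (4) first half or part (2) respectively.

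Part (1) second half is the main technical challenge. Given $\cG \in \uq\dgpg X0$ and $\cF' \in \uq\dgml U0$, after reducing to bounded $\cF'$ by truncating below (using that $j^*\cG \in \dgp U$), the strategy is to construct an extension $A \in \uq\dgml X0$ with $j^*A \cong \cF'$ and the refined property $Li^*A \in \uq\dgml Z{-1}$. Then, applying $\Hom_X(A[1], -)$ to the triangle $i_*Ri^!\cG \to \cG \to Rj_*j^*\cG \to$ realizes $\Hom_U(\cF'[1], j^*\cG)$, via the adjunction $(j^*, Rj_*)$, as the middle term of a long exact sequence sandwiched between $\Hom_X(A[1], \cG) = 0$ (from $A \in \uq\dgml X0$ and $\cG \in \uq\dgpg X0$) and $\Hom_Z(Li^*A, Ri^!\cG) = 0$ (from part (3) giving $Ri^!\cG \in \uq\dgpg Z0$, combined with $Li^*A \in \uq\dgml Z{-1}$).

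The obstacle to this approach is constructing $A$ with the refined $Li^*$-property. The key tool will be a strengthening of Lemma~\ref{lem:q-ext}: inspecting its proof, by replacing $\cI_C^{v-w-q(C)}\cF'$ with $\cI_C^{v-w'-q(C)}\cF'$ for any $w' \le w$, one sees that any $\cF_1 \subset \cG|_U$ with $\cF_1 \in \q\cgl Uw$ in fact extends to $\cF \subset \cG$ with $\cF \in \q\cgl X{w'}$ for arbitrary $w' \le w$. Applying this to each cohomology sheaf $h^k(\cF') \in \q\cgl U{-k}$ produces $\tilde\cH_k \in \q\cgl X{-k-1}$, which by Lemma~\ref{lem:qcgl-res} ensures $Li^*\tilde\cH_k[-k] \in \uq\dgml Z{-1}$. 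Assembling these into a Postnikov tower by induction on the number of nonzero cohomology sheaves of $\cF'$, with the gluing of connecting morphisms handled by noetherian induction on $Z$ together with the flexibility afforded by the strengthened extension lemma, produces the desired $A$ and completes the proof.
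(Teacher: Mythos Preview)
Your treatment of parts (2), (3), (4), and the first half of (1) is correct and essentially matches the paper.  (Your argument for (2) is in fact slightly cleaner than the paper's, which first handles the single-degree case before passing to general bounded and then unbounded objects; your direct truncation argument works just as well.)

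For the second half of (1), your overall strategy---extend the test object to $X$ with extra control on the complement, then use a localization exact sequence---is exactly what the paper does.  However, there are two real problems with your execution.

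First, your ``strengthening'' of Lemma~\ref{lem:q-ext} is false as stated.  You claim that $\cF_1 \in \q\cgl Uw$ extends to $\cF \in \q\cgl X{w'}$ for \emph{any} $w' \le w$.  But membership in $\q\cgl X{w'}$ is a condition at every point of $X^\gen$, including those in $U^\gen$; since $\cF|_U \cong \cF_1$, this would force $\cF_1 \in \q\cgl U{w'}$, which you have not assumed.  Multiplying by higher powers of $\cI_C$ only improves the behavior over orbits $C$ in the \emph{complement} of $U$; it cannot change $\cF_1$ itself.  What is actually true---and what the paper uses---is that one can extend to an object of $\flatql\cgl Xw$, where $\ufl q$ agrees with $q$ on $U^\gen$ and equals $q-1$ on the complement.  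This is exactly Lemma~\ref{lem:q-ext} applied with $\ufl q$ in place of $q$, and requires no strengthening.  The key consequence, that $L\kappa^*_{Z'}$ of the extension lies in $\uq\dgml{Z'}{-1}$, then follows because $\ufl q = q-1$ on $Z'$.

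Second, the Postnikov assembly step is both unjustified and unnecessary.  Lifting a complex from $U$ to $X$ with prescribed cohomology sheaves requires lifting the connecting maps compatibly, and your phrase ``handled by noetherian induction on $Z$ together with the flexibility afforded by the strengthened extension lemma'' does not explain how.  The paper sidesteps this entirely: since $\uq\dgbl U0$ is stable under $\tau^{\le n}$ and $\tau^{\ge n}$ (Corollary~\ref{cor:stag-trunc}), it suffices to check $\Hom(\cG[1], j^*\cF) = 0$ for $\cG \in \uq\dgbl U0$ concentrated in a single degree.  One then extends a single sheaf (no assembly required) and applies the localization exact sequence directly.
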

\begin{proof}
We will prove the parts of this lemma in the order~(2), (4), (3), (1).

(2)~First, suppose $\cF \in \uq\dgbl X0$ is concentrated in a single
degree, say $\cF \cong h^k(\cF)[-k]$.  Then $\cF \in \q\dsl X{-k}$, so by
Lemma~\ref{lem:baric-res}, $Li^*\cF \in \q\dsml Z{-k}$.  We also clearly
have $Li^*\cF \in \dgml Zk$, so it follows that $Li^*\cF \in \uq\dgml Z0$. 
Next, for general $\cF \in \uq\dgbl X0$, induction on the number of nonzero
cohomology sheaves (together with the fact that $\uq\dgml Z0$ is stable
under extensions) allows us to reduce to the case already considered, and
we conclude that $Li^*$ takes $\uq\dgbl X0$ to $\uq\dgml Z0$.  Finally, if
$\cF \in \uq\dgml X0$, consider the distinguished triangle
\[
Li^*\tau^{\le k-1}\cF \to Li^*\cF \to Li^*\tau^{\ge k}\cF \to.
\]
Since $\tau^{\ge k}\cF \in \uq\dgbl X0$, we know that $Li^*\tau^{\ge k}\cF
\in \uq\dgml Z0$.  Moreover, we see from the long exact cohomology sequence
associated to this distinguished triangle $h^k(Li^*\cF) \cong
h^k(Li^*\tau^{\ge k}\cF) \in \q\dsl Z{-k}$.  Thus, $Li^*\cF \in \uq\dgml
Z0$, as desired.

(4)~Because $i_*$ is $t$-exact (with respect to the standard
$t$-structure), and because $\uq\dgml Z0$ and $\uq\dgml X0$ are defined by
conditions on cohomology sheaves, it follows from Lemma~\ref{lem:baric-res}
that $i_*$ takes $\uq\dgml Z0$ to $\uq\dgml X0$.  (The same argument shows
that $j^*$ takes $\uq\dgml X0$ to $\uq\dgml U0$.)  On the other hand, if
$\cF \in \uq\dgpg Z0$, then for any $\cG \in \uq\dgml X0$, we have
\[
\Hom(\cG[1], i_*\cF) \cong \Hom(Li^*\cG[1], \cF) = 0,
\]
where, in the last step, we have used the fact that $Li^*\cG \in \uq\dgml
Z0$.  Thus, $i_*\cF \in \uq\dgpg X0$.

(3)~Let $\cF \in \uq\dgpg X0$.  For any $\cG \in \uq\dgml Z0$, we have
\[
\Hom(\cG[1], Ri^!\cF) \cong \Hom(i_*\cG[1], \cF) = 0.
\]
Here, we have used the fact that $i_*\cG \in \uq\dgml X0$.  Thus, $Ri^!\cF
\in \uq\dgpg Z0$.

(1)~It was observed in the proof of part~(4) that $j^*$ takes $\uq\dgml X0$
to $\uq\dgml U0$.  Next, suppose $\cF \in \uq\dgpg X0$.  To show that
$j^*\cF \in \uq\dgpg U0$, it suffices, as noted at the beginning of this
section, to show that $\Hom(\cG[1],\cF) = 0$ for all $\cG \in \uq\dgbl
U0$.  But since $\uq\dgbl U0$ is stable under $\tau^{\ge n}$ and $\tau^{\le
n}$, we can further reduce to showing that $\Hom(\cG[1],\cF) = 0$ whenever
$\cG$ is an object of $\uq\dgbl U0$ that is concentrated in a single
degree.  Suppose $\cG \cong h^k(\cG)[-k]$.  We then have $h^k(\cG) \cong
\cG[k] \in \q\cgl U{-k}$.  Let $\ufl q: X^\gen \to \Z$ be as
in~\eqref{eqn:qpm}.  Then of course $\q\cgl U{-k} = \flatql\cgl U{-k}$, and
by Lemma~\ref{lem:q-ext}, $\cG[k]$ may be extended to a sheaf $\cG' \in
\flatql\cgl X{-k}$.  Consider the exact sequence
\[
\Hom(\cG'[-k+1],\cF) \to \Hom(j^*\cG'[-k+1],j^*\cF) \to
\lim_{\substack{\to \\ Z'}} \Hom(L\kappa^*_{Z'}\cG'[-k],
R\kappa^!_{Z'}\cF),
\]
where $\kappa_{Z'}: Z' \hto X$ ranges over all closed subscheme structures
on the complement of $U$.  We clearly have that $\cG'[-k] \in \flatqu\dgbl
X0 \subset \uq\dgbl X0$, so $\Hom(\cG'[-k+1],\cF) = 0$.  We already know
that $R\kappa^!_{Z'}\cF \in \uq\dgpg {Z'}0$, and that
$L\kappa^*_{Z'}\cG'[-k] \in \flatqu\dgml {Z'}0 = \uq\dgml {Z'}{-1}$, so the
last term above vanishes as well.  Therefore, $\Hom(j^*\cG'[-k+1], j^*\cF)
\cong \Hom(\cG[1],j^*\cF) = 0$, so $j^*\cF \in \uq\dgpg U0$, as desired.
\end{proof}

\begin{prop}
We have $\D(\uq\dgml X0) = \barq\dgpg X0$.
\end{prop}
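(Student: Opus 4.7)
My plan is to establish both inclusions $\D(\uq\dgml X0) \subset \barq\dgpg X0$ and $\D(\uq\dgml X0) \supset \barq\dgpg X0$ separately. Note that $\D$-symmetry alone does not reduce one to the other, since applying $\D$ and swapping $q$ with $\bar q$ simultaneously interchanges the ``lower'' and ``upper'' parts of the staggered $t$-structures, giving at most one of the two required directions.

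For the forward inclusion, let $\cF \in \uq\dgml X0$. Since $\cF \in \dgm X$ and $\omega_X \in \dgb X$, the complex $\D\cF$ lies in $\dgp X$. By the definition of $\barq\dgpg X0$, verifying $\D\cF \in \barq\dgpg X0$ reduces to showing $\Hom(\cG[1], \D\cF) = 0$ for every $\cG \in \barq\dgml X0$. Grothendieck--Serre adjunction identifies this group with $\Hom(\cF, \D\cG[-1])$, so the task is to show this Hom vanishes. I plan to reduce to a sheaf-level computation by dévissage: the stability of $\uq\dgml X0$ and $\barq\dgml X0$ under standard truncations (immediate from their cohomological definitions), combined with the boundedness-above of $\cF$ and boundedness-below of $\D\cG[-1]$, reduces via the standard $t$-structure orthogonality and long exact Hom sequences to the case where both $\cF$ and $\cG$ are bounded; induction on the number of nonzero cohomology sheaves using the triangles $\tau^{\le a} \to \mathrm{id} \to \tau^{\ge a+1} \to$ then further reduces to $\cF = \cH_1[-k]$ and $\cG = \cH_2[-\ell]$ with $\cH_1 \in \q\cgl X{-k}$, $\cH_2 \in \barq\cgl X{-\ell}$. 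The remaining claim is
\[
\Ext^{k+\ell-1}(\cH_1, \D\cH_2) = 0.
\]

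The main obstacle is this sheaf-level Ext vanishing, which I plan to handle by noetherian induction on $\supp\cH_2$. For the inductive step, pick a generic point $x$ of that support and use the exact sequence
\[
\varinjlim_{Z'} \Hom(\cH_1, i_{Z'*}Ri_{Z'}^!\D\cH_2[k+\ell-1]) \to \Ext^{k+\ell-1}(\cH_1, \D\cH_2) \to \Hom(\cH_1|_U, \D\cH_2[k+\ell-1]|_U),
\]
where $i_{Z'}: Z' \hto X$ ranges over closed subscheme structures on $\barGx$ and $U = X \ssm \barGx$. The first term, via Grothendieck duality on $Z'$, becomes an Ext group between $Li_{Z'}^*\cH_1$ and $Li_{Z'}^*\cH_2$, which lie in the appropriate staggered categories on $Z'$ by Lemma~\ref{lem:stag-res}, and so is handled by induction. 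The last term is analyzed on a dense open $V \subset \barGx$ chosen so that $\omega_X|_V$ is concentrated in degree $\cod \barGx$ with $s$-weight $\alt \barGx$; then $\D\cH_2|_V$ is a sheaf in degree $\cod \barGx$ with $s$-weight governed by $\alt\barGx + \ell - \bar q(x)$, and the defining identity $q(x) + \bar q(x) = \scod \barGx = \alt\barGx + \cod\barGx$, combined with the weight of $\cH_1|_V$ and the $s$-structure Hom-vanishing axiom (S9), forces the required vanishing. The reverse inclusion $\barq\dgpg X0 \subset \D(\uq\dgml X0)$, i.e., $\D\cG \in \uq\dgml X0$ for $\cG \in \barq\dgpg X0$, is established by an analogous dévissage: one translates the condition $h^k(\D\cG) \in \q\cgl X{-k}$ into weight bounds on dense open subsets of orbit closures, uses the orthogonal characterization of $\q\cgl X{-k}$ as left orthogonal to $\q\cgg X{-k+1}$, applies Grothendieck duality to transfer these to Hom-vanishing statements against $\cG$, and then invokes the assumption $\cG \in \barq\dgpg X0$ after a noetherian-induction reduction using the same numerology $q + \bar q = \scod$.
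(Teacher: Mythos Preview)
Your overall architecture for the forward inclusion---reduce via standard truncation to sheaves concentrated in one degree, then run a noetherian induction using the open--closed exact sequence, with the open piece handled by the numerology $q+\bar q=\scod$ together with axiom~(S9)---is essentially the paper's. But the inductive step as you have written it is internally inconsistent. You take $U=X\ssm\barGx$ with $x$ a generic point of $\supp\cH_2$, so the \emph{last} term of your exact sequence lives over $U$; yet you then propose to analyze that term ``on a dense open $V\subset\barGx$,'' which is disjoint from $U$. Meanwhile you say the \emph{first} term (a limit over closed subscheme structures $Z'$ on $\barGx$) is handled by induction on $\supp\cH_2$; but when $\supp\cH_2=\barGx$ is irreducible, passing to $Z'$ does not shrink the support at all, so there is no inductive gain. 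The paper's setup avoids both problems by taking $U$ to be an open orbit $C\subset X$ and inducting on $X$: over $C$ the object $\D\cF|_C$ is concentrated in a single degree with computable $s$-weight (so the vanishing follows from the baric or $s$-structure orthogonality), while the complementary closed subscheme is strictly smaller than $X$.

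For the reverse inclusion your sketch elides a real obstacle. You propose to verify $h^k(\D\cG)\in\q\cgl X{-k}$ by a d\'evissage parallel to the forward direction, but $\barq\dgpg X0$ is \emph{not} stable under the standard truncation functors $\tau^{\le n}$, $\tau^{\ge n}$, so there is no way to reduce $\cG$ to objects concentrated in one cohomological degree while staying inside $\barq\dgpg X0$. The paper handles this by a different d\'evissage: the bounded category $\barq\dgbg X0$ \emph{is} stable under the baric truncation functors $\lbarq\beta_{\le k}$, $\lbarq\beta_{\ge k}$, so one first reduces to baric-pure bounded objects (where the computation over an open orbit $C$ goes through), and only afterwards passes to unbounded $\cG\in\barq\dgpg X0$ by converting the condition $\tau^{\ge k}\D\cG\in\uq\dgbl X0$ into a $\Hom$-vanishing against $\uq\dgbg X1$ and invoking the already-established bounded case with $q$ and $\bar q$ interchanged. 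Your outline does not supply any substitute for this step.
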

\begin{proof}
We proceed by noetherian induction, and assume the result is known for all proper closed subschemes of $X$.  To show that $\D(\uq\dgml X0) \subset \barq\dgpg X0$, let us begin by considering the special case where $\cF \in \uq\dgml X0$ is concentrated in a single degree, say $\cF \cong h^k(\cF)[-k]$.  Then $\cF \in \q\dsl X{-k}$, so $\D\cF \in \cheq \dsg Xk$.  Choose an open orbit $C \subset X$.  By~\cite[Lemma~6.6]{a}, $\D\cF|_C$ is concentrated in a single degree, {\it viz.}, in degree $\cod \overline C -k$.  We claim that $\D\cF|_C \in \barq\dgpg C0$.  To prove this, it suffices to show that if $\cG \in \barq\dgbl C0$, then $\Hom(\cG[1],\D\cF|_C) = 0$.  Consider the exact sequence
\begin{multline*}
\Hom((\tau^{\ge \cod \overline C -k +1}\cG)[1],\D\cF|_C) \to
\Hom(\cG[1],\D\cF|_C) \to \\
\Hom((\tau^{\le \cod \overline C - k}\cG)[1],\D\cF|_C).
\end{multline*}
The last term clearly vanishes because $\D\cF|_C \in \dgbg C{\cod X-k}$.  On the other hand, note that $\tau^{\ge \cod \overline C -k +1}\cG \in \lbarq\dsl C{k-\cod \overline C -1}$.  Over the single orbit $C$, the functions $\bar q$ and $\dualq$ differ simply by the constant $\cod \overline C$, and thus $\lbarq\dsl C{k-\cod \overline C-1} = \cheq\dsl C{k-1}$.  Since $\D\cF|_C \in \cheq \dsg Ck$, the first term above vanishes, and hence so does the middle term.

We have shown that $\D\cF|_C \in \barq\dgpg C0$.  Next, let $\cG \in \barq\dgbl X0$, and consider the exact sequence
\[
\lim_{\substack{\to \\ Z'}} \Hom(Li^*_{Z'}\cG[1], Ri^!_{Z'}\D\cF) \to
\Hom(\cG[1],\D\cF) \to \Hom(\cG[1]|_C,\cD\cF|_C),
\]
where $i_{Z'}:Z' \hto X$ ranges over all closed subscheme structures on the complement of $C$.  We have just seen that the last term vanishes.  Also, $Li^*_{Z'}\cG \in \barq\dgml {Z'}0$ and $Ri^!_{Z'}\D\cF \cong \D(Li^*_{Z'}\cF) \in \barq\dgpg {Z'}0$ by Lemma~\ref{lem:stag-res} and the inductive assumption, so the first term vanishes as well.  Therefore, the middle term vanishes, and we conclude that for $\cF \in \uq\dgml X0$ concentrated in a single degree, we have$\D\cF \in \barq\dgbg X0$.  It follows by induction on the number of nonzero cohomology sheaves that $\D$ also takes all objects of the bounded category $\uq\dgbl X0$ to $\barq\dgbg X0$.  

Finally, let us consider a general object $\cF \in \uq\dgml X0$.  We wish to show that $\Hom(\cG[1], \D\cF) = 0$ for all $\cG \in
\barq\dgbl X0$.  By the previous paragraph, $\D\cG \in \uq\dgbg X0 \subset
\uq\dgpg X0$, so
\[
\Hom(\cG[1],\D\cF) \cong \Hom(\cF, \D(\cG[1])) \cong \Hom(\cF[1],\D\cG)
= 0.
\]
Thus, $\D(\uq\dgml X0) \subset \barq\dgpg X0$.

The argument for the opposite inclusion is similar, and we again use noetherian induction, but we cannot begin with the case of an object concentrated in one degree, since $\barq\dgpg X0$ is not stable under the standard truncation functors.  The bounded category $\barq\dgbg X0$ is, however, stable under the baric truncation functors $\lbarq\beta_{\le k}$ and $\lbarq\beta_{\ge k}$.  Suppose, then, that $\cF \in \barq\dgbg X0$ is ``baric-pure'': that is, $\cF \in \lbarq\dsl Xk \cap \lbarq\dsg Xk$ for some $k$.  If we prove that $\D\cF \in \uq\dgbl X0$, then it will follow by induction on ``baric length'' that $\D$ takes all objects of $\barq\dgbg X0$ to $\uq\dgbl X0$.

The assumptions on $\cF$ imply that $\cF \in \dgbg X{-k}$.  Once again, let $C \subset X$ be an open orbit.  It follows from~\cite[Lemma~6.6]{a} that $\D\cF|_C \in \dgbl C{\cod \overline C +k}$.  We also know that $\D\cF \in {}_{\Hat{\Bar q}}\dsl X{-k}$, where
\[
\Hat{\Bar q}(C) = \alt \overline C - (\alt \overline C + \cod \overline C - q(C)) = q(C) - \cod \overline C.
\]
In particular, ${}_{\Hat{\Bar q}}\dsl C{-k} = \q\dsl C{-\cod \overline C-k}$.  Since $\D\cF|_C \in \dgbl C{\cod \overline C +k}$, we see that $\D\cF|_C \in \uq\dgbl C0$.

To show that $\D\cF \in \uq\dgbl X0$, it suffices, by
Proposition~\ref{prop:compat}, to show that $\Hom(\D\cF[1],\cG) = 0$ for
all $\cG \in \uq\dgbg X0$.  Consider the exact sequence
\[
\lim_{\substack{\to \\ Z'}} \Hom(Li^*_{Z'}\D\cF[1], Ri^!_{Z'}\cG) \to
\Hom(\D\cF[1],\cG) \to \Hom(\D\cF[1]|_C,\cG|_C),
\]
where $i_{Z'}: Z' \hto X$ ranges over all closed subscheme structures on
the complement of $C$.  The last term above vanishes because $\D\cF|_C \in \uq\dgbl C0$.  We also have $Li^*_{Z'}\D\cF \cong \D(Ri^!_{Z'}\cF) \in \uq\dgml {Z'}0$ and $Ri^!_{Z'}\cG \in \uq\dgpg {Z'}0$ by Lemma~\ref{lem:stag-res} and the inductive assumption.  Hence, the first term in the sequence above vanishes, so the middle
term vanishes as well, and we conclude that $\D\cF \in \uq\dgbl X0$.  Thus,
$\D(\barq\dgbg X0) \subset \uq\dgbl X0$.

Finally, we must consider a general object $\cF \in \barq\dgpg X0$. 
Showing that $\D\cF \in \uq\dgml X0$ is equivalent to showing that
$\tau^{\ge k}\D\cF \in \uq\dgbl X0$ for all $k$.  If the latter condition
fails for some $k$, then there exists an object $\cG \in \uq\dgbg X1$ such
that $\Hom(\tau^{\ge k}\D\cF, \cG) \ne 0$.  By replacing $k$ by a smaller
integer if necessary, we may assume that $\cG \in \dgbg Xk$.  We then have
\[
\Hom(\tau^{\ge k}\D\cF, \cG) \cong \Hom(\D\cF,\cG) \cong \Hom(\D\cG,\cF)
\ne 0.
\]
By exchanging the roles of $q$ and $\bar q$ in the previous paragraph, we
see that $\D\cG \in \barq\dgbl X{-1}$, but this contradicts the fact that
$\cF \in \barq\dgpg X0$.  Therefore, $\D\cF \in \uq\dgml X0$, and
$\D(\barq\dgpg X0) = \uq\dgml X0$, as desired.
\end{proof}

The next theorem follows immediately from the last proposition.

\begin{thm}\label{thm:stag-dual}
The dual of the staggered $t$-structure $(\uq\dgbl X0, \uq\dgbg X0)$ is the
staggered $t$-structure $(\barq\dgbl X0, \barq\dgbg X0)$.  In particular,
in the case where every orbit $C \subset X$ has even staggered codimension,
and $q$ is the function $q(C) = \half\scod C$, the $t$-structure $(\uq\dgbl
X0, \uq\dgbg X0)$ is self-dual.\qed
\end{thm}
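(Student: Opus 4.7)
The plan is to deduce this theorem almost immediately from the preceding proposition, which states $\D(\uq\dgml X0) = \barq\dgpg X0$.

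First I would restrict the proposition's equality of unbounded categories to the bounded derived category: since $\D$ is an involutive antiequivalence on $\dgb X$ (it sends $\dgb X$ to itself), intersecting both sides of $\D(\uq\dgml X0) = \barq\dgpg X0$ with $\dgb X$ yields $\D(\uq\dgbl X0) = \barq\dgbg X0$. By definition of the dual $t$-structure, this together with the corresponding identity $\D(\barq\dgbl X0) = \uq\dgbg X0$ (obtained by applying $\D$ to both sides, using $\D \circ \D \cong \mathrm{id}$ and $\bar{\bar q} = q$, and noting the symmetry of the proposition's statement under the interchange $q \leftrightarrow \bar q$) expresses precisely the claim that the two staggered $t$-structures are Serre--Grothendieck dual to each other.

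For the ``in particular'' clause, it suffices to observe that the involution $q \mapsto \bar q$ on functions $X^\gen \to \Z$ defined by $\bar q(x) = \scod\barGx - q(x)$ has as its fixed points exactly the functions satisfying $q(x) = \half \scod\barGx$. Under the hypothesis that every orbit has even staggered codimension, the function $q(C) = \half\scod C$ is well-defined and integer-valued, and a direct computation gives $\bar q(C) = \scod C - \half\scod C = \half\scod C = q(C)$. Hence $\uq\dgbl X0 = \barq\dgbl X0$ and $\uq\dgbg X0 = \barq\dgbg X0$, so the first part of the theorem specializes to self-duality.

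There is no real obstacle here; the entire content of the theorem is packaged in the previous proposition, and what remains is simply the bookkeeping of (i) passing from unbounded to bounded categories via intersection with $\dgb X$, (ii) unwinding the definition of ``dual $t$-structure,'' and (iii) verifying the elementary identity $\bar q = q$ for the midpoint function $q(C) = \half \scod C$. The genuine work was done in establishing $\D(\uq\dgml X0) = \barq\dgpg X0$ by the noetherian-induction argument involving baric-pure objects and the open-orbit computation based on \cite[Lemma~6.6]{a}.
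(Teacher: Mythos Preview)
Your proposal is correct and matches the paper's approach exactly: the paper simply states that the theorem ``follows immediately from the last proposition'' and marks it with a \qed, and your write-up supplies precisely the routine bookkeeping (intersecting with $\dgb X$, using $\D\circ\D\cong\mathrm{id}$ and $\bar{\bar q}=q$, and checking the fixed-point condition for the middle perversity) that this one-line deduction entails.
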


We conclude with a study of simple objects in $\uq\cM(X)$.  The statements
below and their proofs are very similar to those
in~\cite[Section~3.2]{bez:pcs} or~\cite[Section~9]{a}, and most details of
the proofs will be omitted.  Instead, each statement is followed by brief
remarks clarifying the relationship to statements in~\cite{bez:pcs}
or~\cite{a}.

\begin{prop}\label{prop:ic}
Let $j: U \hto X$ be a dense open subscheme.  Given a function $q: X^\gen
\to \Z$, define $\ufl q, \ush q: X^\gen \to \Z$ as in~\eqref{eqn:qpm}, and
define a full subcategory $\uq\cM^{!*}(X) \subset \uq\cM(X)$ by
$\uq\cM^{!*}(X) = \flatqu\dgbl X0 \cap \sharpqu\dgbg X0$.  The functor
$j^*$ induces an equivalence of categories $\uq\cM^{!*}(X) \to \uq\cM(U)$. 
Moreover, objects of $\uq\cM^{!*}(X)$ have no subobjects or quotients in
$\uq\cM(X)$ that are supported on $X \ssm U$.
\end{prop}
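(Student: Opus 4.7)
The proof divides into three steps: checking that $j^*$ lands in $\uq\cM(U)$, verifying full faithfulness, and constructing a quasi-inverse. The first step is immediate: on $U$ the functions $\ufl q$ and $\ush q$ both coincide with $q$, so $\flatqu\dgbl U0 = \sharpqu\dgbl U0 = \uq\dgbl U0$ and similarly $\flatqu\dgbg U0 = \sharpqu\dgbg U0 = \uq\dgbg U0$. Combined with Lemma~\ref{lem:stag-res}, this yields $j^*(\uq\cM^{!*}(X)) \subset \uq\cM(U)$.

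For full faithfulness, the key observation is that on any closed subscheme $i_{Z'}: Z' \hto X$ with $Z' \subset X \ssm U$, unwinding the relations $\ufl q(x) = q(x) - 1$ and $\ush q(x) = q(x) + 1$ for $x \in Z'^{\gen}$ gives $\flatqu\dgml {Z'}0 = \uq\dgml {Z'}{-1}$ and $\sharpqu\dgpg {Z'}0 = \uq\dgpg {Z'}1$. Hence for $A, B \in \uq\cM^{!*}(X)$, Lemma~\ref{lem:stag-res} yields $Li_{Z'}^* A \in \uq\dgml {Z'}{-1}$ and $Ri_{Z'}^! B \in \uq\dgpg {Z'}1$. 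In the standard localization exact sequence
\[
\lim_{\substack{\to \\ Z'}} \Hom(Li_{Z'}^* A, Ri_{Z'}^! B) \to \Hom(A, B) \to \Hom(j^*A, j^*B) \to \lim_{\substack{\to \\ Z'}} \Hom(Li_{Z'}^* A, Ri_{Z'}^! B[1]),
\]
both outer terms vanish by orthogonality of the $\uq$-staggered $t$-structure on $Z'$, which gives $\Hom(\uq\dgml {Z'}{-1}, \uq\dgpg {Z'}k) = 0$ for $k = 0, 1$.

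The main obstacle is essential surjectivity. My plan, following the intermediate-extension construction of~\cite[Section~3.2]{bez:pcs} and~\cite[Section~9]{a}, is the following: given $\cF \in \uq\cM(U)$, build a canonical $\flatqu$-staggered sheaf ${}^{\ufl q}j_* \cF \in \flatqu\cM(X)$ extending $\cF$ by truncating the derived pushforward $Rj_* \cF \in \dgp X$ with respect to the $\flatqu$-staggered $t$-structure, and verify that the result lies in $\dgb X$ using finiteness of orbits and boundedness of the baric structure. Dually, using Serre--Grothendieck duality $\D$ together with Theorem~\ref{thm:stag-dual}, produce a canonical $\sharpqu$-staggered sheaf ${}^{\ush q}j_! \cF \in \sharpqu\cM(X)$ extending $\cF$. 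Adjunction then supplies a natural morphism ${}^{\ush q}j_! \cF \to {}^{\ufl q}j_* \cF$ inside $\uq\cM(X)$, and $\bar\cF$ is defined as its image. The technical crux is to check $\bar\cF \in \flatqu\dgbl X0 \cap \sharpqu\dgbg X0$, which amounts to showing that these subcategories are stable under the appropriate sub/quotients inside $\uq\cM(X)$; the argument leans on Lemma~\ref{lem:q-ext} and the analysis from Section~\ref{sect:baric-coh2}.

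Finally, the no-sub/quotient assertion follows by direct adjunction. If $A \hto B$ is a monomorphism in $\uq\cM(X)$ with $B \in \uq\cM^{!*}(X)$ and $A \cong i_* A'$ is supported on $Z = X \ssm U$, then $\Hom(A, B) \cong \Hom(A', Ri^! B)$; here $A' \in \uq\cM(Z) \subset \uq\dgbl Z0$, while $Ri^! B \in \sharpqu\dgpg Z0 = \uq\dgpg Z1$ by Lemma~\ref{lem:stag-res}, so orthogonality of the $\uq$-staggered $t$-structure on $Z$ forces $A = 0$. The argument for quotients is dual: use $\Hom(B, i_* C') \cong \Hom(Li^* B, C')$ together with $Li^* B \in \flatqu\dgml Z0 = \uq\dgml Z{-1}$ and $C' \in \uq\dgbg Z0$.
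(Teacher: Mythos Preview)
Your proposal follows the same overall strategy as the paper, which simply refers to~\cite[Theorem~2]{bez:pcs} and~\cite[Proposition~9.2]{a} and observes that the only new ingredient is Theorem~\ref{thm:stag-coh}, which guarantees that the $\ufl q$- and $\ush q$-staggered $t$-structures exist without any inequality on $q$.  Your treatments of the first step, of full faithfulness, and of the no-subobject/no-quotient assertion are correct and match the standard arguments.

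There is one genuine technical gap in your essential-surjectivity step.  You propose to construct ${}^{\ufl q}j_*\cF$ by truncating $Rj_*\cF \in \dgp X$ with respect to the $\flatqu$-staggered $t$-structure, and then to check that the result lies in $\dgb X$.  But the staggered $t$-structure of Theorem~\ref{thm:stag-coh} is only established on $\dgb X$; there is no truncation functor on $\dgp X$ available to apply, so the sentence ``truncate, then check the result is bounded'' has no meaning as written.  (For coherent sheaves, $Rj_*$ genuinely leaves the bounded coherent world, unlike in the constructible setting.)  The route taken in~\cite{bez:pcs} and~\cite{a}, and implicitly here, is instead: choose an arbitrary extension $\cF_1 \in \dgb X$ with $j^*\cF_1 \cong \cF$, apply the $\flatqu$- and $\sharpqu$-staggered truncation functors to $\cF_1$ inside $\dgb X$, and then use the full-faithfulness argument you already gave to show the resulting object of $\uq\cM^{!*}(X)$ is independent of the choice of $\cF_1$.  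With this correction, your outline is complete and agrees with the paper's approach.
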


\begin{proof}[Remarks on proof]
This statement corresponds to~\cite[Theorem~2]{bez:pcs}
or~\cite[Proposition~9.2]{a}, but both those statements impose a condition
on the function $q$ (denoted $p$ in {\it loc.~cit.}) that is not imposed
here.  The reason is that the proof requires that the categories
$(\flatqu\dgbl X0, \flatqu\dgbg X0)$ and $(\sharpqu\dgbl X0, \sharpqu\dgbg
X0)$ associated to $\ufl q$ and $\ush q$ (denoted $p^+$ and $p^-$ in {\it
loc.~cit.}) actually constitute $t$-structures.  In the present paper,
Theorem~\ref{thm:stag-coh} tells us that this is the case with no
assumptions, whereas in both~\cite{bez:pcs} and~\cite{a}, the $t$-structure
is constructed only for $p$ obeying certain inequalities.
\end{proof}

\begin{defn}
The inverse equivalence to that of the preceding proposition, denoted
$j_{!*}: \uq\cM(U) \to \uq\cM^{!*}(X)$, is known as the
\emph{intermediate-extension functor}.
\end{defn}

\begin{defn}\label{defn:ic}
Let $Y$ be a locally closed subscheme of $X$.  Let $h: Y \hto \overline Y$
and $\kappa: \overline Y \hto X$ denote the inclusion maps.  For any $\cF
\in \uq\cM(Y)$, we define an object of $\uq\cM(X)$ by
\[
\cIC(\overline Y, \cF) = \kappa_*(h_{!*}\cF).
\]
This is called the \emph{(staggered) intersection cohomology complex}
associated to $\cF$.
\end{defn}

Recall that the \emph{step} of a coherent sheaf is defined to be the unique
integer $w$ (if such an integer exists) such that the sheaf belongs $\cgl
Xw \cap \cgg Xw$.  An irreducible vector bundle on an orbit always has a
well-defined step.

\begin{prop}\label{prop:simple}
Let $\cF \in \uq\cM(X)$.  $\cF$ is a simple object if and only if $\cF
\cong \cIC(\overline C, \cL[-q(C)+\step \cL])$ for some orbit $C \subset X$
and some irreducible vector bundle $\cL \in \cg C$.
\end{prop}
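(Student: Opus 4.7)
The plan is to prove the two implications separately, reducing the ``only if'' direction, via restriction to a generic orbit, to the single-orbit analysis performed in the ``if'' direction.

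For the ``if'' direction, fix an orbit $C$, an irreducible equivariant vector bundle $\cL \in \cg C$ of step $s$, and set $N := \cL[s - q(C)]$. First, $N \in \uq\cM(C)$ by direct check: its unique cohomology sheaf lies in degree $q(C)-s$ and equals $\cL \in \cgl Cs = \q\cgl C{s-q(C)}$, so $N \in \uq\dgbl C0$; and because $\cL$ is pure of step $s$, $\q\beta_{\le k}N$ equals $N$ for $k \ge s-q(C)$ and vanishes otherwise, so $N \in \uq\dgbg C0$. Second, $N$ is simple in $\uq\cM(C)$: the defining conditions of $\uq\dgbl C0$ and $\uq\dgbg C0$ force any proper subobject or quotient to be concentrated in degree $q(C)-s$ with pure-step-$s$ cohomology, reducing the claim to irreducibility of $\cL$ in the semisimple pure-step-$s$ subcategory of $\cg C$. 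Third, Proposition~\ref{prop:ic} places $h_{!*}N$ in $\uq\cM^{!*}(\overline C)$; any nonzero proper subobject in $\uq\cM(\overline C)$ would restrict to either $0$ or $N$ on $C$, and both are ruled out by the no-sub/no-quotient-on-boundary clause of Proposition~\ref{prop:ic}. Finally, by Lemma~\ref{lem:stag-res} the closed pushforward $\kappa_*$ along $\kappa: \overline C \hto X$ is $t$-exact for the staggered $t$-structures and fully faithful, and any subobject of $\kappa_* h_{!*}N$ in $\uq\cM(X)$ is set-theoretically supported on $\overline C$ by Proposition~\ref{prop:settheorysupport}, hence by rigidity (Proposition~\ref{prop:rigid}) comes from $\uq\cM(\overline C)$. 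Thus $\cIC(\overline C, N)$ is simple.

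For the ``only if'' direction, let $\cF \in \uq\cM(X)$ be simple, and pick an orbit $C$ open in the set-theoretic support of $\cF$; such a $C$ exists because $X$ has finitely many orbits. Set $V := X \ssm (\supp\cF \ssm C)$ and let $j: V \hto X$. Then $j^*\cF$ is a nonzero object of $\uq\cM(V)$ set-theoretically supported on $C$, which is closed in $V$. The central claim is that $j^*\cF$ is simple in $\uq\cM(V)$ and, viewed on $V \cap \overline C$ via the rigidity correspondence, lies in $\uq\cM^{!*}$ with respect to $C \hto V \cap \overline C$: any violating sub/quotient sequence would extend, via closed pushforward from $X \ssm V$ (using rigidity to absorb nilpotent thickenings), to a proper sub/quotient of $\cF$ in $\uq\cM(X)$, contradicting simplicity. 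Restricting $j^*\cF$ further to the dense open orbit $C$ yields a simple object of $\uq\cM(C)$, which by the single-orbit analysis above must have the form $\cL[s-q(C)]$ for some irreducible $\cL \in \cg C$ of step $s$. Applying the equivalence of Proposition~\ref{prop:ic} and then $\kappa_*$ recovers $\cF \cong \cIC(\overline C, \cL[s-q(C)])$.

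The main obstacle lies in the two-way passage between $\cF$ on $X$ and its restriction to a single orbit: sub/quotients that are set-theoretically supported on the orbit closure $\overline C$ need not be scheme-theoretically supported on the reduced structure, so the argument must systematically absorb nilpotent thickenings. This is precisely where the rigidity axiom (Proposition~\ref{prop:rigid}) and the support-preservation property of baric truncations (Proposition~\ref{prop:settheorysupport}) bear the weight. A secondary technical point, standard on a single orbit but worth flagging, is the semisimplicity of the pure-step-$s$ subcategory of $\cg C$, which is what converts ``simple in $\uq\cM(C)$'' into ``irreducible vector bundle of step $s$, shifted by $s - q(C)$.''
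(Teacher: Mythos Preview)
Your ``if'' direction follows the standard line of argument from \cite{bez:pcs} and \cite{a} and is essentially correct, though your citation of Proposition~\ref{prop:settheorysupport} for the claim that subobjects of $\kappa_*h_{!*}N$ are supported on $\overline C$ is misplaced: that proposition concerns baric truncations, not staggered subobjects. The claim itself is easy (restrict to $X \ssm \overline C$ and use $t$-exactness of $j^*$), so this is a minor issue.

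The ``only if'' direction, however, has a genuine gap. You choose an orbit $C$ open in $\supp\cF$ and set $V = X \ssm (\supp\cF \ssm C)$. But since $\overline C \subset \supp\cF$ (the support is closed and contains $C$), one computes directly that $V \cap \overline C = C$. Your claim that $j^*\cF$, viewed on $V \cap \overline C$, lies in $\uq\cM^{!*}$ with respect to $C \hto V \cap \overline C$ is therefore vacuous and carries no information. More seriously, nothing in your argument establishes that $\supp\cF = \overline C$. If the support were reducible, with a second open orbit $C'$ whose closure is not contained in $\overline C$, then even granting that $j^*\cF$ is simple and that $\cF|_C \cong \cL[s - q(C)]$, you cannot conclude $\cF \cong \cIC(\overline C, \cL[s-q(C)])$: the latter is supported on $\overline C$ while $\cF$ is not. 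Your final sentence ``applying the equivalence of Proposition~\ref{prop:ic} and then $\kappa_*$ recovers $\cF$'' presupposes exactly what is missing.

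This is precisely the point the paper isolates as the only new content beyond \cite[Theorem~9.7]{a}. Since $X$ has finitely many orbits, it suffices to show that the scheme-theoretic support $X'$ of a simple $\cF$ is irreducible; then $X'$ is an orbit closure and the argument of \cite{a} applies verbatim. The paper proves irreducibility directly: if $X' = Z \cup Z'$ with $Z, Z'$ proper closed and $i: Z \hto X'$, set $U = Z \ssm (Z \cap Z')$, $U' = Z' \ssm (Z \cap Z')$, and $W = U \cup U'$. Over $W$ the adjunction map $i_*Ri^!\cF'|_W \to \cF'|_W$ is the inclusion of the direct summand supported on $U$, hence neither zero nor an isomorphism; therefore $\uq h^0(i_*Ri^!\cF') \to \cF'$ in $\uq\cM(X')$ is neither zero nor an isomorphism, contradicting simplicity.
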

\begin{proof}[Remarks on proof]
This statement is analogous to~\cite[Corollary~4]{bez:pcs} and
to~\cite[Theorem~9.7]{a}.  The main difference is that in~\cite{a}, $\cF$
is assumed at the outset to be supported on (a possibly nonreduced
subscheme structure on) the closure of one orbit.  (The statement
of~\cite[Theorem~9.7]{a} also imposes conditions on $q$, but those are
unnecessary here for reasons explained in the remarks following
Proposition~\ref{prop:ic}.)  In~\cite{bez:pcs}, it is shown that a simple
object must be supported on an orbit closure using Rosenlicht's Theorem,
but that argument cannot be used here for the reasons given
in~\cite[Remark~9.8]{a}.

To reduce this statement to one where the proof of~\cite[Theorem~9.7]{a}
can be repeated verbatim, we must show by other means that the support of a
simple object is an ({\it a priori} possibly nonreduced) orbit closure. 
Since $X$ is assumed to consist of finitely many $G$-orbits, it suffices to
show that the support of a simple object is irreducible.  Let $\kappa: X'
\hto X$ be the scheme-theoretic support of $\cF$; that is, $\cF \cong
\kappa_*\cF'$, and the restriction of $\cF'$ to any open subscheme of $X'$
is nonzero.  Assume $X'$ is reducible; let $i:Z \hto X'$ and $i':Z' \to X'$
be proper closed subschemes such that $Z \cup Z' = X'$.  Let $U = Z \ssm (Z
\cap Z')$ and $U' = Z' \ssm (Z \cap Z')$.  Clearly, $U$ and $U'$ are
disjoint open subschemes of $X'$.  Let $V = U \cup U'$.  The natural
morphism
\[
i_*Ri^!\cF'|_V \to \cF'|_V
\]
is the inclusion of the direct summand of $\cF|_V$ supported on $U$.  In
particular, the above morphism is neither $0$ nor an isomorphism.  But it
is also the restriction to $V$ of the natural morphism
\[
\uq h^0(i_*Ri^!\cF') \to \cF',
\]
so this latter is also neither $0$ nor an isomorphism.  Therefore, $\cF'$
is not simple, and hence neither is $\cF$.
\end{proof}

\begin{thm}\label{thm:stag-fl}
$\uq\cM(X)$ is a finite-length category.
\end{thm}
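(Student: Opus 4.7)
\emph{Plan.} I would prove Theorem~\ref{thm:stag-fl} by noetherian induction on $X$, using the finiteness of orbits, the exactness properties of $j^*$ and $\kappa_*$ from Lemma~\ref{lem:stag-res}, and the classification of simples from Proposition~\ref{prop:simple}.

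First, I would establish the base case: $X$ consists of a single $G$-orbit $C = G/H$. Any $\cF \in \uq\cM(C) \subset \dgb C$ has only finitely many nonzero cohomology sheaves, and each $h^k(\cF) \in \cg C$ is a $G$-equivariant coherent sheaf on $G/H$, equivalently a finite-dimensional rational representation of the stabilizer $H$. Every finite-dimensional $H$-module has finite length (bounded above by its vector-space dimension), so each $h^k(\cF)$ has finite length in $\cg C$. By Proposition~\ref{prop:simple}, the simples of $\uq\cM(C)$ on a single orbit are shifts of irreducible equivariant vector bundles on $C$. An iterated application of the long exact cohomology sequence to a composition series then bounds the length of $\cF$ in $\uq\cM(C)$ by $\sum_k \ell_{\cg C}(h^k(\cF))$, which is finite.

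Next, for the inductive step, assume the theorem for every proper closed subscheme of $X$. Given $\cF \in \uq\cM(X)$, pick an open orbit $C \subset X$ with $j: C \hto X$ open and $\kappa: Z = X \ssm C \hto X$ closed. By Lemma~\ref{lem:stag-res}, $j^*$ is exact from $\uq\cM(X)$ to $\uq\cM(C)$, and its kernel is the Serre subcategory $K \subset \uq\cM(X)$ of objects supported on $Z$. Since baric truncations preserve set-theoretic support (Proposition~\ref{prop:settheorysupport}), any object of $K$ is pushed forward from $\uq\cM(Z')$ for some subscheme structure $Z'$ on $Z$; each such $\uq\cM(Z')$ is finite-length by induction, so $K$ is a noetherian and artinian category. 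Now, given an ascending chain $\cF_1 \subset \cF_2 \subset \cdots \subset \cF$, restriction along $j^*$ gives a chain in $\uq\cM(C)$, which stabilizes by the base case applied to $j^*\cF$. Past that stabilization index $N$, the successive quotients $\cF_{n+1}/\cF_n$ lie in $K$, so the induced chain inside $\cF/\cF_N$ has subquotients in $K$ and must stabilize by noetherianness of $K$. Hence $\cF$ is noetherian; a dual argument—either repeating the reasoning for descending chains or invoking the duality of Theorem~\ref{thm:stag-dual}—gives the artinian property, and $\cF$ has finite length.

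\emph{Main obstacle.} The principal subtlety is the base case: one must justify that the length of an object in the staggered heart $\uq\cM(C)$ is controlled by the total $\cg C$-length of its cohomology sheaves, which requires some care with the long exact cohomology sequence applied to composition factors (since $\uq\cM(C)$ is not literally a shift of $\cg C$, but a category of complexes whose cohomology in each degree has a prescribed step). A secondary subtlety in the inductive step is that the objects of $K$ do not all come from a single scheme structure on $Z$, so one must combine the inductive hypothesis for a family of nilpotent thickenings and invoke rigidity ({\it cf.}~Proposition~\ref{prop:rigid}) to see that $K$ itself inherits the finite-length property.
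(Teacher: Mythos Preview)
Your proposal is correct and follows essentially the same noetherian-induction strategy as the proofs in~\cite[Corollary~5]{bez:pcs} and~\cite[Theorem~9.9]{a} to which the paper defers; the paper itself gives no independent argument beyond noting that the restrictions on $q$ imposed in those references are unnecessary here because Theorem~\ref{thm:stag-coh} already supplies the auxiliary $t$-structures for $\ufl q$ and $\ush q$. Your identified subtleties are real but minor: for the inductive step, note that once $j^*\cF_N = j^*\cF$, the single object $\cF/\cF_N$ is pushed forward from one fixed thickening $Z'$, and since $i_*$ is fully faithful and $t$-exact (Lemma~\ref{lem:stag-res}), the entire remaining chain lives in $\uq\cM(Z')$, so no appeal to rigidity across varying thickenings is needed.
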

\begin{proof}[Remarks on proof]
This statement and its proof are identical to those
of~\cite[Corollary~5]{bez:pcs} or of~\cite[Theorem~9.9]{a}, except that
here, as in Propositions~\ref{prop:ic} and~\ref{prop:simple}, we impose no
restrictions on $q$.
\end{proof}



\begin{thebibliography}{BBD}
\bibitem[A]{a}
P.~Achar, {\em Staggered $t$-structures on derived categories of
equivariant coherent sheaves}, arXiv:0709.1300.

\bibitem[AS]{as}
P.~Achar and D.~Sage, {\em Staggered sheaves on partial flag varieties},
arXiv:0712.1615.

\bibitem[AT]{at}
P.~Achar and D.~Treumann, {\em Purity and decomposition theorems for
staggered sheaves}, arXiv:0808.3210.

\bibitem[BBD]{bbd}
A.~Be{\u\i}linson, J.~Bernstein, and P.~Deligne, {\em Faisceaux pervers},
  Analyse et topologie sur les espaces singuliers, I (Luminy, 1981),
  Ast\'erisque, vol. 100, Soc. Math. France, Paris, 1982, pp.~5--171.

\bibitem[B1]{bez:qes}
R.~Bezrukavnikov, {\em Quasi-exceptional sets and equivariant coherent
sheaves on the nilpotent cone}, Represent. Theory {\bf 7} (2003), 1--18.

\bibitem[B2]{bez:pcs}
R.~Bezrukavnikov, {\em Perverse coherent sheaves (after Deligne)},
arXiv:math.AG/0005152.

\bibitem[E]{eke}
T.~Ekedahl, {\em Diagonal complexes and $F$-guage structures}, Travaux en
cours, Hermann, Paris, 1986.

\bibitem[GM]{gm:ih}
M.~Goresky and R.~MacPherson, {\em Intersection homology theory}, Topology {\bf 19} (1980), 135--162.

\bibitem[H]{har}
R.~Hartshorne, {\em Residues and duality}, Lecture Notes in Mathematics,
  no.~20, Springer-Verlag, Berlin, 1966.

\bibitem[M]{mor}
S.~Morel, {\em Complexes d'intersection des compactifications de
Baily--Borel. Le cas des groupes unitaires sur $\mathbb Q$}, Ph.D.~thesis,
Universit\'e Paris XI Orsay, 2005.

\bibitem[T]{t}
D.~Treumann, {\em Staggered $t$-structures on toric varieties}, arXiv:0806.0696.

\bibitem[V]{verdier} J.-L.~Verdier, {\em Des cat\'egories d\'eriv\'ees des cat\'egories ab\'eliennes}, Ast\'erisque No. 239, 1996.

\end{thebibliography}
\end{document}